\pdfoutput=1
\documentclass[depthtwo,bib,microtype]{nicestyle}    
    
\title{Champ: A Cherednik algebra Magma package} 
\author{Ulrich Thiel}
\bibliography{references}

\setcounter{tocdepth}{1}

\newcommand{\champ}{\textsc{Champ}}

\usepackage{multirow}

\lstset{basicstyle=\footnotesize\fontfamily{cmvtt}\selectfont}

\raggedbottom

\usepackage{textcomp}

\begin{document}  
\thispagestyle{empty}

\urlstyle{same}

\begin{abstract}
We present a computer algebra package based on \textsc{Magma} for performing computations in rational Cherednik algebras at arbitrary parameters and in Verma modules for restricted rational Cherednik algebras. Part of this package is a new general Las Vegas algorithm for computing the head and the constituents of a module with simple head in characteristic zero which we develop here theoretically. This algorithm is very successful when applied to Verma modules for restricted rational Cherednik algebras and it allows us to answer several questions posed by Gordon in some specific cases.  We could determine the decomposition matrices of the Verma modules, the graded $G$-module structure of the simple modules, and the Calogero–Moser families of the generic restricted rational Cherednik algebra for around half of the exceptional complex reflection groups. In this way we could also confirm Martino's conjecture for several exceptional complex reflection groups.
\end{abstract}

\blfootnote{Date: Jan 19, 2015 (first version: March 26, 2014)} \blfootnote{\textsc{Ulrich Thiel}, Universität Stuttgart, Fachbereich Mathematik, Institut für Algebra und Zahlentheorie, Lehrstuhl für Algebra, Pfaffenwaldring 57, 70569 Stuttgart, Germany.} \blfootnote{Email: \code{thiel@mathematik.uni-stuttgart.de}}

\thispagestyle{empty}
\maketitle
\thispagestyle{empty}

\vspace{-22pt}
\section*{Introduction}
\fancyhead[LO]{\footnotesize\textsc{Ulrich Thiel}}
\fancyhead[RE]{\footnotesize\textsc{\Title}}
         
\begin{parani}
Based on the computer algebra system \textsc{Magma} \cite{Magma} we developed a package, called \textsc{Champ}, which provides an environment for performing computations in rational Cherednik algebras as introduced by Etingof--Ginzburg \cite{EG-Symplectic-reflection-algebras} and in Verma modules for restricted rational Cherednik algebras as introduced by Gordon \cite{Gor-Baby-verma}. It is freely available at \url{http://thielul.github.io/CHAMP/} and consists of around 16,000 lines of code at the moment. It is designed to be highly flexible so that it is possible to work with arbitrary parameters (including indeterminates of a rational function field and thus covering the generic setting), with arbitrary reflection groups over arbitrary fields (including fields of positive characteristic as long as all reflections are diagonalizable), and with arbitrary realizations of the irreducible representations of the reflection groups (see \S\ref{champ}). The development of this package was motivated by questions posed by Gordon \cite[\S7]{Gor-Baby-verma} (see \S\ref{gordon_problems}) and by Martino's conjecture \cite{Mar-CM-Rouqier-partition} (see \S\ref{martino_conjecture_subsection}) which relates Calogero–Moser families with Rouquier families coming from Hecke algebras (see \cite{BroKim02-Familles-de-cara}, \cite{MalRou-Familles-de-caracteres-de-0}, and \cite{Chl09-Blocks-and-famil}). For \textit{exceptional} complex reflection groups almost nothing was known about this. Using the theoretical methods developed here and their implementation in \textsc{Champ} we could make significant progress (see \S\ref{further_results} for a summary and the ancillary document of this article for all results).

In \S\ref{cherednik} we introduce rational Cherednik algebras over general base rings and deduce the PBW theorem in this generality by using properties of rewrite systems. We discuss an efficient algorithm for performing computations in these algebras, i.e., for expressing products in the PBW basis. This has been implemented in \textsc{Champ} and allows us for example to explicitly compute Poisson brackets which have a variety of applications (see \cite{Bonnafe.C;Rouquier.R13Cellules-de-Calogero}). In \S\ref{restricted_cherednik} we discuss an efficient algorithm for computing Verma modules for restricted rational Cherednik algebras. This allows us to construct and handle Verma modules even of dimension around 3,000 in \textsc{Champ}. As there is so far no algorithm capable of decomposing such high-dimensional modules over a field of characteristic zero, it is one of the central advances in this article that we theoretically develop a very general strategy for doing this (see paragraphs \S\ref{ffspecs} to \S\ref{heads}). We say ``strategy'' here as our theory yields a so-called Las Vegas algorithm, meaning that it does not have to be successful but if it is we get the correct result. We have implemented this algorithm—with a lot of technical extensions—in \champ. Our idea is to use \textit{finite field specializations} (which are compositions of decomposition morphisms in the sense of Geck–Rouquier \cite{GR-Centers-Simple-Hecke}) to transport the modules to an algebra over a finite field (see \S\ref{ffspecs}), then apply the \textsc{MeatAxe} \cite{Hol-Meataxe}, and use a method for \textit{reconstructing} the head of the original module—the latter is the essential part of our approach (see \S\ref{modfinder}) and culminates in an algorithm we call \textsc{ModFinder}.  To apply this algorithm to Verma modules for restricted rational Cherednik algebras we first have to ensure the existence of ``integral structures'' of these algebras. This is an interesting theoretical problem which has not been considered before. In \S\ref{ffspecs} we develop some theory around this problem and present an algorithmic partial (but for us sufficient) solution. Despite the uncertainty in the success of this algorithm it turned out to be extremely efficient and successful for Verma modules. Namely, we are able to compute for all the exceptional complex reflection groups
\[
\rG_4, \rG_5, \rG_6, \rG_7, \rG_8, \rG_9, \rG_{10}, \rG_{12}, \rG_{13}, \rG_{14}, \rG_{15}, \rG_{16}, \rG_{20}, \rG_{22}, \rG_{23}=\rH_3, \rG_{24}
\]
the decomposition matrices of the Verma modules, the structure of the simple modules as graded $G$-modules, and the Calogero–Moser families of the associated generic restricted rational Cherednik algebra—and thus the answers to Gordon's questions in these cases.\footnote{The reader should check the website \url{http://thielul.github.io/CHAMP/} and \cite{Thi-CHAMP:-A-Cherednik-A2014} for further results obtained after publication of this article.}  Nothing was known about this before. Moreover, we confirm in this way the generic part of Martino's conjecture for these groups. As \textsc{Champ} was designed to handle arbitrary parameters (including generic points of subschemes) we are also able to do the same for \textit{all} parameters for the groups $\rG_4$, $\rG_{12}$, $\rG_{13}$, $\rG_{20}$, $\rG_{22}$, and $\rG_{23}=\rH_3$, and confirm the complete form of Martino's conjecture in these cases. For the groups $\rG_{4}$, $\rG_6$, $\rG_8$, $\rG_{13}$, $\rG_{14}$, and $\rG_{20}$ we furthermore give an explicit description of the ``exceptional locus'', which could not be determined so far. It coincides precisely with the union of Chlouveraki's essential hyperplanes of cyclotomic Hecke algebras \cite{Chl09-Blocks-and-famil}—except for $\rG_8$ where we surprisingly have one additional ``exceptional'' hyperplane (this was discovered before by Bonnafé using entirely different methods). 

All results are listed explicitly in tabular form in the ancillary document of this article and are easily accessible from within \champ\ for future work (see \S\ref{champ_db}). In \S\ref{further_results} we summarize them along with some observations. %

We hope that our package and our results will enable us to better understand  problems about rational Cherednik algebras like the precise connection between Calogero–Moser families and Rouquier families, and the recent Calogero–Moser cell conjecture by Bonnafé–Rouquier \cite{Bonnafe.C;Rouquier.R13Cellules-de-Calogero}. We expect that our method for computing the heads and decomposition matrices of Verma modules can be applied to many more examples outside of rational Cherednik algebras.
\end{parani}

\begin{ack}
I would like to thank Claus Fieker for showing me some tricks in \textsc{Magma} which led to improvements of \champ. Furthermore, I would like to thank Gunter Malle for several comments on a preliminary version of this article. I am also thankful to the referee of LMS J.\ Comput.\ Math.\ for several remarks. I was partially supported by the \textit{DFG Schwerpunktprogramm Darstellungstheorie 1388}.
\end{ack}

\tableofcontents

\section{Computing in rational Cherednik algebras} \label{cherednik}

\begin{parani}
We start by reviewing rational Cherednik algebras (see also \cite{EG-Symplectic-reflection-algebras}, \cite{Gor-Baby-verma}, \cite{Bonnafe.C;Rouquier.R13Cellules-de-Calogero}, and \cite{Thiel.UOn-restricted-ration}) in this paragraph and explain how they can be treated computationally. Instead of the complex numbers as base rings we consider a very general setup here to be able to treat generic parameters algebraically and to introduce analogous problems with modular reflection groups. We argue that the PBW theorem follows in this generality from the fact that there exists a terminating confluent rewrite system for rational Cherednik algebras. As a by-product, this formalizes an algorithm for computing in these algebras and proves its correctness.
\end{parani}

\subsection{Rational Cherednik algebras}

Throughout, we fix a field $K$ and a finite reflection group $\Gamma \dopgleich (G,V)$ over $K$. This means that $G$ is a non-trivial finite group, $V$ is a finite-dimensional faithful $KG$-module, and $G$ is generated by the set $\mrm{Ref}_\Gamma$ of elements $s \in G$ which act as reflections on $V$, i.e., those elements whose fixed space $\rH_s \dopgleich \Ker(\id_V-s)$ is of codimension one. We denote the action of $g \in G$ on $v \in V$ by $\,^g v$. For $s \in \mrm{Ref}_\Gamma$ we denote by $\alpha_s^\vee$ a \word{root} of $s$, i.e., a non-zero element of $\Im(\id_V-s)$, and by $\alpha_s$ we denote a \word{coroot} of $s$, i.e., an element of $V^*$ whose kernel is equal to $\rH_s$. Both roots and coroots of reflections are unique up to scalars and our constructions will not depend on their choice.

We assume that all reflections in $G$ are diagonalizable. This is  equivalent to $\langle \alpha_s^\vee,\alpha_s \rangle \neq 0$ for all $s \in \mrm{Ref}_\Gamma$, where $\langle \cdot, \cdot \rangle$ is the canonical pairing between $V$ and $V^*$. As all reflections in $\Gamma$ are of finite order, this is certainly satisfied if $\Gamma$ is \word{non-modular}, i.e., the characteristic of $K$ is coprime to the order of $G$. In the modular case the general orthogonal groups in their natural representation in case $K$ is of characteristic not equal to $2$, the symmetric group $\rS_n$ in the representation attached to the partition $(n-1,1)$ in case $K$ is of characteristic not equal to $2$, and some modular reductions of exceptional complex reflection groups satisfy this property for example (see \cite{Thiel.UOn-restricted-ration}). 
In addition to $\Gamma$ we furthermore fix a commutative $K$-algebra $R$, an element $t \in R$, and a map $c:\sC_\Gamma \rarr R$ from the set $\sC_\Gamma$ of conjugacy classes of reflections of $\Gamma$ to $R$. The \word{rational Cherednik algebra} of $\Gamma$ in $(t,c)$ is defined as the quotient $\rH_{t,c}$ of $R \langle V \oplus V^* \rangle \rtimes RG$ by the ideal $\rI_{t,c}$ generated by the relations
\begin{align} 
& \lbrack x,x' \rbrack = 0 \quad \tn{for all } x,x' \in V^*  \;, \label{cherednik_relation_1} \\ 
& \lbrack y,y' \rbrack = 0 \quad \tn{for all } y,y' \in V \;, \label{cherednik_relation_2}
\end{align}
and
\begin{equation} \label{cherednik_relation_3}
\lbrack y,x \rbrack = t\langle y,x \rangle + \sum_{s \in \mrm{Ref}_\Gamma} (y,x)_s c(s) s \quad \tn{for all } x \in V^*, y \in V \;,
\end{equation}
where
\begin{equation}
(y,x)_s \dopgleich \frac{ \langle y,\alpha_s \rangle \langle \alpha_s^\vee,x \rangle}{\langle \alpha_s^\vee,\alpha_s \rangle} \in K\;.
\end{equation}
Here,  we denote by $R \langle V \rangle$ the tensor algebra of $V^*$ over $R$ and by $R \lbrack V \rbrack$ we denote the symmetric algebra of $V^*$, i.e., the quotient of $R \langle V \rangle$ by the ideal generated by the elements $xx'-x'x$ for $x,x' \in V^*$. Furthermore, $R \langle V \oplus V^* \rangle \rtimes RG$ denotes the semi-direct product of the tensor algebra of $V^* \oplus V$ over $R$ with the group algebra over $R$. As we assumed that all reflections are diagonalizable, we have $\langle \alpha_s^\vee,\alpha_s \rangle \neq 0$ so that the last relation is always well-defined. Note that it is also independent of the choice of the roots and coroots. 

\subsection{The PBW theorem}

Let $\biy \dopgleich (y_i)_{i=1}^n$ be a basis of $V$ with dual basis $\bix \dopgleich (x_i)_{i=1}^n$. We denote by $\rF_n$ the set of finite sequences $\alpha \dopgleich (\alpha_1,\ldots,\alpha_l)$ in $\lbrack 1,n \rbrack \dopgleich \lbrace 1,\ldots,n \rbrace$ and define for such a sequence the expression $\bix_\alpha \dopgleich \prod_{i=1}^{l} x_{\alpha_i} \in R \langle V \rangle$. Then $(\bix_\alpha)_{\alpha \in \rF_n}$ is an $R$-basis of $R \langle V \rangle$. An $R$-basis of $R \lbrack V \rbrack$ is formed by the elements $\bix^\alpha \dopgleich \prod_{i=1}^n x_i^{\alpha_i}$ with $\alpha \in \bbN^n$. The choice of a basis provides us with a natural $R$-linear section of the quotient morphism $R \langle V \rangle \twoheadrightarrow R \lbrack V \rbrack$ by mapping $\bix^\alpha \in R \lbrack V \rbrack$ to $\prod_{i=1}^n x_i^{\alpha_i} \in R \langle V \rangle$. In the same way we have a natural $R$-linear section of $R \langle V^* \rangle \twoheadrightarrow R \lbrack V^* \rbrack$. As an $R$-module the semi-direct product $R \langle V \oplus V^* \rangle \rtimes RG$ is isomorphic to $R \langle V \oplus V^* \rangle \otimes_R RG$. 
The two sections above can thus be put together to yield an $R$-linear section $s_\biy$ of the quotient morphism $R \langle V \oplus V^* \rangle \rtimes RG \twoheadrightarrow R \lbrack V \oplus V^* \rbrack \rtimes RG$. The image $N_\biy$ of $s_\biy$ is the free $R$-submodule of $R \langle V \oplus V^* \rangle \rtimes RG$ with basis $\bix^\alpha \biy^\beta g$ and we get a commutative diagram

\[
\begin{tikzcd}
\mbox{} & N_\biy \arrow[dashed, hookrightarrow]{d}\\
\mbox{} & R \langle V \oplus V^* \rangle \rtimes RG \arrow[twoheadrightarrow]{dl} \arrow[twoheadrightarrow]{dr} \\
R\lbrack V \oplus V^* \rbrack \rtimes RG \arrow[dashed, bend left]{uur}{s_\biy}[swap]{\cong} \arrow[dashed,swap]{rr}{\pi} & & \rH_{t,c} 
\end{tikzcd}
\]
where the dashed arrows are morphisms of $R$-modules only and $\pi$ is the composition of $s_\biy$ with the quotient morphism. This morphism is actually independent of the choice of $\biy$ and is called the \word{PBW morphism}.  It is clear from the relations (\ref{cherednik_relation_1}) and (\ref{cherednik_relation_2}) that $\pi$ is surjective so that the elements $\bix^\alpha \biy^\beta g$ generate $\rH_{t,c}$ as an $R$-module. The essence of the \word{PBW theorem} for rational Cherednik algebras is that $\pi$ is in fact an isomorphism (equivalently, the restriction of the quotient morphism $R \langle V \oplus V^* \rangle \rtimes RG \twoheadrightarrow \rH_{t,c}$ to $N_\biy$ is injective for one, and then any, basis $\biy$). Hence, the elements $\bix^\alpha \biy^\beta g$ with $\alpha,\beta \in \bbN^n$ form an $R$-basis of $\rH_{t,c}$. We call such a basis a \word{PBW basis}. One sometimes prefers to use that $R\lbrack V \oplus V^* \rbrack \rtimes RG$ is as an $R$-module isomorphic to $R\lbrack V \rbrack \otimes_R RG \otimes_R R \lbrack V^* \rbrack$ so that we have a triangular decomposition of $\rH_{t,c}$ and a basis of the form $\bix^\alpha g \biy^\beta$. This fact is used in \S\ref{rrca_represent_theory}. \\
The PBW theorem was originally proven by Etingof–Ginzburg \cite{EG-Symplectic-reflection-algebras} in the case $K = R = \bbC$. Their proof, however, seems to be not easily extendable to our general setting. Ram–Shepler \cite{RS-Classification-of-graded-Hecke} instead gave a proof in the same case which is formalized and extended in \cite{Thiel.UOn-restricted-ration}. The advantage of this approach is not only that it can be adapted to give a proof of the PBW theorem over general base rings but that it also provides the theoretical foundation of our computational approach to rational Cherednik algebras. To explain this let us first formalize the role of $N_\biy$ in the PBW theorem.

\subsection{Normal forms and rewrite systems}

\begin{definition}  \label{symb_Nna}\label{normal_form_algebra_def}
Let $A$ be an algebra over a commutative ring $R$ and let $I \unlhd A$ be an ideal. A \word{weak normal form} of $A/I$ is an $R$-submodule $N \subs A$ such that any element of $A$ is modulo $I$ equivalent to an element of $N$, i.e., the restriction $\pi|_N$ of the quotient morphism $\pi:A \twoheadrightarrow A/I$ to $N$ is still surjective. For $a \in A$ we call the elements in $\sN_N(a) \dopgleich \pi|_N^{-1}(\pi(a)) = \pi^{-1}(\pi(a)) \cap N$ the \word{normal forms} of $a$ with respect to $N$, and similarly we define $\sN_N(\ol{a}) \dopgleich \pi|_N^{-1}(\ol{a}) = \pi^{-1}(\ol{a}) \cap N$ for $\ol{a} \in A/I$. 
If every element of $A$ has a unique normal form with respect to $N$, i.e., $\pi|_N:N \twoheadrightarrow A/I$ is an isomorphism of $R$-modules, we say that $N$ is a \word{normal form} of $A/I$. 
\end{definition}

\begin{para}
Finding a normal form for a quotient of a (commutative) polynomial ring by an ideal is one of the central problems of computational commutative algebra and it can be solved via Gröbner bases as explained in the following example.
\end{para}

\begin{example}\label{example_groebner_basis_normal_form}
Let $A \dopgleich K\lbrack \biX \rbrack$ be the polynomial ring over a field $K$ in the variables $\biX \dopgleich (X_i)_{i=1}^n$. Let $\prec$ be a monomial order on $A$. Let $I \unlhd A$ be an ideal and let $G \dopgleich \lbrace g_1,\ldots, g_s \rbrace$ be a Gröbner basis of $I$ with respect to $\prec$, i.e., $\mrm{LT}(I) = \mrm{LT}(G)$, where $\mrm{LT}(-)$ denotes the ideal generated by the leading terms. Let 
\[
C(I) \dopgleich \lbrace \biX^\alpha \mid \alpha \in \bbN^n \tn{ and } \biX^\alpha \tn{ is not divisible by some } \mrm{LT}(g) \tn{ for } g \in G \rbrace \subs A \;.
\]
Then $N_I \dopgleich \langle C(I) \rangle_K \subs A$ is a normal form of $A/I$ (see \cite[\S1.2]{Geck.M03An-introduction-to-a}).
\end{example}

\begin{para}
We can reformulate the PBW theorem as stating that the $R$-submodule $N_\biy$ is a normal form for $\rH_{t,c} = (R \langle V \oplus V^* \rangle \rtimes RG)/\rI_{t,c}$. We will show this by proving that there exists a terminating confluent rewrite system having $N_\biy$ as the set of normal forms. To this end, let us first recall some basic notions about rewrite systems (see \cite{Bezem.M;Klop.J;Vrijer.R03Term-rewriting-syste}).
\end{para}

\begin{definition} \label{rewrite_systems}
A \word{rewrite system} is a pair $\sA \dopgleich (A, \rarr )$ consisting of a set $A$ and a binary relation $\rarr$ on $A$. We write $a \rarr b$ if $(a,b) \in \ \rarr$. This relation is called the \word{rewrite relation} of $\sA$. The reflexive-transitive closure of $\rarr$ is denoted by $\twoheadrightarrow$. An element $a \in A$ is \word{reducible} if there is some $b \in A$ with $a \neq b$ and $a \rarr b$. Otherwise it is called \word{irreducible} (or in \word{normal form}).  A \word{normal form} of an element $a \in A$ is an irreducible element $b \in A$ with $a \twoheadrightarrow b$. We denote by $\sN_\sA(a)$ the set of normal forms of $a$. The rewrite system $\sA$ is (\words{uniquely}{normalizing!uniquely}) \word{normalizing} if every element $a \in A$ has a (unique) normal form. It is called \word{terminating} if there does not exist an infinite chain $a_1 \rarr a_2 \rarr \cdots$. It is called \word{locally confluent} if
     \[
     \forall a,b,c \in A \left( c \leftarrow a \rightarrow  b \Rightarrow \exists d \in A \left(c \twoheadrightarrow d \twoheadleftarrow b \right) \right) \;.
     \]
     This condition is precisely the commutativity of the diagram
     \[
     \begin{tikzcd}[column sep=small, row sep=small]
     & {\bullet} \arrow{dl} \arrow{dr} \\
      {\bullet} \arrow[dashed, twoheadrightarrow]{dr} & &  \bullet \arrow[dashed, twoheadrightarrow]{dl} \\
      & \bullet
     \end{tikzcd}
     \]
     where the vertices denote the corresponding elements of $A$ and the dashed arrows indicate the existence condition. Finally, $\sA$ is called \word{confluent} if
     \[
     \forall a,b,c \in A \left( c \twoheadleftarrow a \twoheadrightarrow  b \Rightarrow \exists d \in A \left(c \twoheadrightarrow d \twoheadleftarrow b \right) \right) \;.
     \]
 \end{definition}

\begin{para}
Very helpful for proving confluence of a rewrite system is Newman's lemma which states that a terminating rewrite system is confluent if and only if it is locally confluent (see \cite[Theorem 1.2.1]{Bezem.M;Klop.J;Vrijer.R03Term-rewriting-syste}). Let us record some further elementary facts about rewrite systems.  
\end{para}

\begin{lemma} \label{rewrite_system_properties}
    The following holds for a rewrite system $\sA \dopgleich (A,\rarr)$:
    \begin{enum_thm}
        \item \label{rewrite_system_properties:term} If $\sA$ is terminating, then $\sA$ is normalizing. 
        \item \label{rewrite_system_properties:confl} If $\sA$ is confluent, then any element of $A$ has at most one normal form.
        \item \label{rewrite_system_properties:unorm} $\sA$ is uniquely normalizing if and only if it is normalizing and confluent.
    \end{enum_thm}
\end{lemma}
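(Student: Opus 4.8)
All three statements are elementary consequences of the definitions, and I would prove them in the stated order, reusing earlier parts.

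For part \ref{rewrite_system_properties:term}, suppose $\sA$ is terminating. Given $a \in A$, I want to produce a normal form. If $a$ is already irreducible we are done. Otherwise there is some $a_1$ with $a \rarr a_1$ and $a \neq a_1$; if $a_1$ is irreducible we are done, else continue. This process must stop: an infinite such chain $a \rarr a_1 \rarr a_2 \rarr \cdots$ (with strict steps) would contradict termination. Hence after finitely many steps we reach an irreducible $b$ with $a \twoheadrightarrow b$, i.e.\ a normal form of $a$, so $\sA$ is normalizing. (One should be slightly careful that the definition of ``reducible'' uses $a \neq b$, so the chain witnessing non-termination genuinely has distinct consecutive terms; this matches the definition of ``terminating'' as written.)

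For part \ref{rewrite_system_properties:confl}, suppose $\sA$ is confluent and let $b, c$ both be normal forms of $a$, so $b \twoheadleftarrow a \twoheadrightarrow c$. By confluence there is $d$ with $b \twoheadrightarrow d \twoheadleftarrow c$. Since $b$ is irreducible, $b \twoheadrightarrow d$ forces $b = d$ (the reflexive–transitive closure of $\rarr$ starting at an irreducible element reaches only that element itself), and likewise $c = d$. Hence $b = c$, so $a$ has at most one normal form.

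For part \ref{rewrite_system_properties:unorm}: if $\sA$ is uniquely normalizing then by definition every element has a normal form, so $\sA$ is normalizing; it remains to see it is confluent. Given $c \twoheadleftarrow a \twoheadrightarrow b$, pass to normal forms: by normalizing-ness (applied to $b$ and to $c$, using that a normal form of $b$ or $c$ is then also reachable from $a$) there exist irreducible $b', c'$ with $b \twoheadrightarrow b'$ and $c \twoheadrightarrow c'$, hence $b', c'$ are both normal forms of $a$; uniqueness gives $b' = c'$, and $d \dopgleich b' = c'$ witnesses confluence. Conversely, if $\sA$ is normalizing and confluent, then every element has at least one normal form (normalizing) and, by part \ref{rewrite_system_properties:confl}, at most one; hence exactly one, so $\sA$ is uniquely normalizing.

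The only mild subtlety — and the closest thing to an obstacle — is the repeated use of the fact that if $x$ is irreducible and $x \twoheadrightarrow y$ then $x = y$, and the bookkeeping that a normal form of $b$ reached via $a \twoheadrightarrow b \twoheadrightarrow b'$ is a normal form of $a$; both are immediate from the definition of the reflexive–transitive closure but are worth stating explicitly so the argument in part \ref{rewrite_system_properties:unorm} is airtight.
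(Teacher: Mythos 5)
Your proof is correct and follows essentially the same route as the paper, which dismisses parts \ref{rewrite_system_properties:term} and \ref{rewrite_system_properties:confl} as easy and proves \ref{rewrite_system_properties:unorm} exactly as you do (passing to normal forms $\wt{b}$, $\wt{c}$ of $b$, $c$, observing both are normal forms of $a$, and invoking uniqueness). Your explicit attention to the $a\neq b$ clause in the definition of reducibility and to the fact that $x$ irreducible and $x\twoheadrightarrow y$ force $x=y$ is a welcome bit of extra care, not a deviation.
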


\begin{proof}
    Assertions \ref{rewrite_system_properties:term} and \ref{rewrite_system_properties:confl} are easy to see. If $\sA$ is uniquely normalizing, it is normalizing by definition. To see that $\sA$ is confluent let $a,b,c \in A$ with $c \twoheadleftarrow a \twoheadrightarrow b$. Let $\wt{c}$ be a normal form of $c$ and let $\wt{b}$ be a normal form of $b$. We then have $a \twoheadrightarrow b \twoheadrightarrow \wt{b}$ and $a \twoheadrightarrow c \twoheadrightarrow \wt{c}$. Since $\wt{b}$ and $\wt{c}$ are irreducible, they are both normal forms of $a$. But then $\wt{b} = \wt{a} = \wt{c}$, where $\wt{a}$ is the unique normal form of $a$. This shows that $\sA$ is confluent. The other direction is evident.
    \end{proof}

\begin{para}
We want to establish a rewrite system on an algebra with respect to an ideal. Such a rewrite system should satisfy some natural compatibility conditions. We propose the following definition (there seems to be no established general theory yet).
\end{para}

   \begin{definition} \label{rewrite_system_for_algebra}
Let $A$ be an algebra over a commutative ring $R$ and let $I \unlhd A$ be an ideal. A \word{rewrite system} for $A/I$ is a rewrite system $\sA \dopgleich (A,\rarr)$ on $A$ satisfying the following properties:
 \begin{enum_thm}
      \item \label{rewrite_system_for_algebra:equiv} If $a \rarr b$, then $a \equiv b \modd I$ for all $a,b \in A$.
      \item \label{rewrite_system_for_algebra:norm_ra} If $a \in A$ is irreducible, also $ra$ is irreducible for all $r \in R$. %
      \item \label{rewrite_system_for_algebra:norm_ab} If $a,b \in A$ are irreducible, also $a+b$ is irreducible.%
 \end{enum_thm} 
\end{definition}

\begin{para}
We can now relate the two notions of normal forms in Definition  \ref{normal_form_algebra_def} and Definition \ref{rewrite_systems}. The following two lemmas are the key to the PBW theorem.
\end{para}

\begin{lemma} \label{rewrite_system_for_algebra_properties}
    Let $A$ be an algebra over a commutative ring $R$, let $I \unlhd A$ be an ideal and let $\sA \dopgleich (A,\rarr)$ be a rewrite system for $A/I$. The following holds:
    \begin{enum_thm}
        \item \label{rewrite_system_for_algebra_properties:equiv} If $a \twoheadrightarrow b$, then $a \equiv b \modd I$ for all $a,b \in A$.
        \item \label{rewrite_system_for_algebra_properties:norm} If $\sA$ is normalizing, then  
\[
N_\sA \dopgleich \bigcup_{a \in A} \sN_\sA(a) \subs A
\]
is a weak normal form of $A/I$ with $\sN_\sA(a) \subs \sN_{N_\sA}(a)$ for all $a \in A$. 

    \end{enum_thm}
\end{lemma}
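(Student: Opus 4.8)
The plan is to treat the two assertions in turn, the first being a routine closure argument and the second a direct consequence of it together with the module-compatibility axioms in Definition~\ref{rewrite_system_for_algebra}.

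For part~\ref{rewrite_system_for_algebra_properties:equiv}, I would use that being congruent modulo $I$ is an equivalence relation on $A$, in particular reflexive and transitive. By axiom~\ref{rewrite_system_for_algebra:equiv} the rewrite relation $\rarr$ is contained in this congruence relation; since $\twoheadrightarrow$ is by definition the reflexive-transitive closure of $\rarr$, and the congruence relation is already reflexive and transitive, $\twoheadrightarrow$ is contained in it as well. Concretely, if $a = a_0 \rarr a_1 \rarr \cdots \rarr a_k = b$, then a trivial induction on $k$ applying axiom~\ref{rewrite_system_for_algebra:equiv} at each step gives $a \equiv b \modd I$, the base case $a = b$ (that is, $k = 0$) being clear.

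For part~\ref{rewrite_system_for_algebra_properties:norm}, the first observation is that $N_\sA$ is precisely the set of irreducible elements of $A$: every element of some $\sN_\sA(a)$ is irreducible by definition, and conversely every irreducible $b$ lies in $\sN_\sA(b)$ because $b \twoheadrightarrow b$. Next I would check that this set is an $R$-submodule of $A$, which is exactly what axioms~\ref{rewrite_system_for_algebra:norm_ra} and~\ref{rewrite_system_for_algebra:norm_ab} are designed for: it is closed under addition by~\ref{rewrite_system_for_algebra:norm_ab} and under multiplication by scalars from $R$ by~\ref{rewrite_system_for_algebra:norm_ra}; moreover it is nonempty, since $\sA$ is normalizing and hence possesses at least one irreducible element $a$, whence $0 = 0 \cdot a \in N_\sA$ by~\ref{rewrite_system_for_algebra:norm_ra} again. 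To see that $\pi|_{N_\sA}$ is surjective, take $\ol{a} \in A/I$, lift it to some $a \in A$, and let $b \in \sN_\sA(a)$ be a normal form of $a$, which exists because $\sA$ is normalizing; then $b \in N_\sA$ and $a \twoheadrightarrow b$, so $\pi(b) = \pi(a) = \ol{a}$ by part~\ref{rewrite_system_for_algebra_properties:equiv}. Thus $N_\sA$ is a weak normal form of $A/I$. Finally, for the inclusion $\sN_\sA(a) \subs \sN_{N_\sA}(a)$: if $b \in \sN_\sA(a)$ then $b$ is irreducible, so $b \in N_\sA$, and $a \twoheadrightarrow b$ forces $\pi(b) = \pi(a)$ by part~\ref{rewrite_system_for_algebra_properties:equiv}, whence $b \in \pi^{-1}(\pi(a)) \cap N_\sA = \sN_{N_\sA}(a)$.

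The argument is essentially bookkeeping; the only point that deserves care is the verification that $N_\sA$ really is an $R$-submodule and not merely a subset — in particular that it contains $0$ — which is precisely why axioms~\ref{rewrite_system_for_algebra:norm_ra} and~\ref{rewrite_system_for_algebra:norm_ab} are built into the definition of a rewrite system for $A/I$, and why the normalizing hypothesis (guaranteeing the existence of an irreducible element) is invoked.
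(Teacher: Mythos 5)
Your proof is correct and follows essentially the same route as the paper's (much terser) argument: part (a) via reflexivity/transitivity of $\equiv$ and axiom (a) of Definition~\ref{rewrite_system_for_algebra}, and part (b) by identifying $N_\sA$ with the set of irreducible elements, invoking axioms (b) and (c) for the submodule property, and using part (a) for surjectivity. Your extra care about nonemptiness of $N_\sA$ (via $0 = 0\cdot a$ for an irreducible $a$) and the explicit check of the inclusion $\sN_\sA(a) \subs \sN_{N_\sA}(a)$ are details the paper leaves implicit.
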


\begin{proof}
The first assertion follows immediately from Definition \ref{rewrite_system_for_algebra}\ref{rewrite_system_for_algebra:equiv} and the fact that $\equiv$ is both reflexive and transitive. Furthermore, Definition \ref{rewrite_system_for_algebra}\ref{rewrite_system_for_algebra:norm_ra} and Definition \ref{rewrite_system_for_algebra}\ref{rewrite_system_for_algebra:norm_ab} imply that $N_\sA$ is an $R$-submodule of $A$ and it is then a weak normal form of $A/I$ due to \ref{rewrite_system_for_algebra_properties:equiv}
\end{proof}

\begin{lemma} \label{rewrite_system_for_algebra_properties2}
    Let $A$ be an algebra over a commutative ring $R$, let $I \unlhd A$ be an ideal and let $\sA \dopgleich (A,\rarr)$ be a normalizing rewrite system for $A/I$. The following are equivalent:
    \begin{enum_thm}
        \item \label{rewrite_system_for_algebra_properties2:nf} $N_\sA$ is a normal form of $A/I$.
        \item \label{rewrite_system_for_algebra_properties2:confl_ideal} $a \twoheadrightarrow 0$ for all $a \in I$.
    \end{enum_thm}
    In this case $\sA$ is uniquely normalizing and $\sN_\sA(a) = \sN_{N_\sA}(a)$ for all $a \in A$.
\end{lemma}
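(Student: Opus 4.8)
The plan is to prove the equivalence of \ref{rewrite_system_for_algebra_properties2:nf} and \ref{rewrite_system_for_algebra_properties2:confl_ideal} by exploiting the description of $N_\sA$ from Lemma \ref{rewrite_system_for_algebra_properties}\ref{rewrite_system_for_algebra_properties:norm} together with the normalizing hypothesis. Since $\sA$ is normalizing, $N_\sA$ is already a weak normal form, so the only thing at stake in \ref{rewrite_system_for_algebra_properties2:nf} is uniqueness: $\pi|_{N_\sA}$ is injective, equivalently $N_\sA \cap I = 0$. So the real content is the equivalence of $N_\sA \cap I = 0$ with the condition that every $a \in I$ rewrites to $0$.

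First I would prove \ref{rewrite_system_for_algebra_properties2:confl_ideal} $\Rightarrow$ \ref{rewrite_system_for_algebra_properties2:nf}. Suppose $a \twoheadrightarrow 0$ for all $a \in I$. Take $n \in N_\sA \cap I$; I want $n = 0$. By definition of $N_\sA$, there is some $a \in A$ with $n \in \sN_\sA(a)$, so $n$ is irreducible. On the other hand $n \in I$, so by hypothesis $n \twoheadrightarrow 0$. But an irreducible element has only itself as a normal form (it admits no nontrivial rewrite step), hence $n = 0$. Thus $\pi|_{N_\sA}$ is injective; combined with surjectivity from Lemma \ref{rewrite_system_for_algebra_properties}\ref{rewrite_system_for_algebra_properties:norm} this gives that $N_\sA$ is a normal form of $A/I$. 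For the in-between claims: once $N_\sA$ is a normal form, uniqueness of normal forms with respect to the $R$-module $N_\sA$ holds, and since $\sN_\sA(a) \subs \sN_{N_\sA}(a)$ always holds and the right-hand side is now a singleton, we get $\sN_\sA(a) = \sN_{N_\sA}(a)$ for all $a$; in particular each $\sN_\sA(a)$ is a singleton, so $\sA$ is normalizing with a unique normal form for every element, i.e.\ uniquely normalizing. (Here one uses Definition \ref{rewrite_system_for_algebra}\ref{rewrite_system_for_algebra:equiv}: any normal form of $a$ lies in $\pi^{-1}(\pi(a)) \cap N_\sA = \sN_{N_\sA}(a)$.)

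Conversely, for \ref{rewrite_system_for_algebra_properties2:nf} $\Rightarrow$ \ref{rewrite_system_for_algebra_properties2:confl_ideal}: assume $N_\sA$ is a normal form of $A/I$ and let $a \in I$. Since $\sA$ is normalizing, $a$ has some normal form $b$, and then $b \in \sN_\sA(a) \subs \sN_{N_\sA}(a)$, so $b \in N_\sA$; moreover $a \twoheadrightarrow b$ forces $a \equiv b \modd I$ by Lemma \ref{rewrite_system_for_algebra_properties}\ref{rewrite_system_for_algebra_properties:equiv}, hence $b \in I$ as well, so $b \in N_\sA \cap I$. Since $0 \in N_\sA \cap I$ too and $\pi|_{N_\sA}$ is injective with $\pi(b) = \pi(0) = 0$, we conclude $b = 0$, i.e.\ $a \twoheadrightarrow 0$.

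I do not expect a genuine obstacle here: the argument is essentially a bookkeeping exercise interlacing the two notions of ``normal form'' and invoking that irreducible elements are their own unique normal forms. The one point requiring a little care is the final ``In this case'' sentence — making sure that the passage from ``$N_\sA$ is a normal form'' to ``$\sA$ is uniquely normalizing'' correctly uses that every normal form of an element $a$ (in the rewrite-system sense) necessarily lands in $\sN_{N_\sA}(a)$, which is where Definition \ref{rewrite_system_for_algebra}\ref{rewrite_system_for_algebra:equiv} and the already-established $\sN_\sA(a)\subs\sN_{N_\sA}(a)$ come in; once that is noted, uniqueness on the $N_\sA$ side transfers verbatim to the rewrite side.
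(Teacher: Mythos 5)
Your proof is correct and follows essentially the same route as the paper's: the converse direction reduces to showing that an irreducible element of $I$ must equal $0$ because it rewrites to $0$ yet admits no nontrivial rewrite step, and the forward direction together with the ``in this case'' claims follows from $\sN_\sA(a)\subs\sN_{N_\sA}(a)$ and the singleton property of $\sN_{N_\sA}(a)$. No gaps.
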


\begin{proof}
Suppose that $N_\sA$ is a normal form of $A/I$. Then $\sN_{N_\sA}(a)$ is a singleton for all $a \in A$. Since $\sA$ is normalizing and $\sN_\sA(a) \subs \sN_{N_\sA}(a)$, this implies that $\sN_\sA(a) = \sN_{N_\sA}(a)$ and so $\sN_\sA(a)$ is also a singleton. Hence, $\sA$ is uniquely normalizing. Moreover, if $a \in I$, then $\sN_\sA(a) = \sN_{N_\sA}(a) = \pi^{-1}(\pi(a)) \cap N_{\sA} = \pi^{-1}(0) \cap N_{\sA} = I \cap N_{\sA} = \lbrace 0 \rbrace$. Hence, $a \twoheadrightarrow 0$ for all $a \in I$. 

Now, suppose that \ref{rewrite_system_for_algebra_properties2:confl_ideal} holds.  To show that $N_\sA$ is a normal form, we show that the restriction $\pi|_{N_\sA}$ of the quotient morphism $\pi:A \twoheadrightarrow A/I$ to $N_\sA$ is injective. If $\wt{a}$ is an element of the kernel of this morphism, then $\wt{a} \in N_\sA \cap I$, so $\wt{a}$ is an irreducible element contained in $I$. But the assumption that $a \twoheadrightarrow 0$ for all $a \in I$ implies that whenever $a \in I$ is irreducible, then already $a=0$. Hence $\wt{a} = 0$ and so $N_\sA$ is a normal form of $A/I$.   
\end{proof}

\begin{remark}
If $\sA$ is normalizing and satisfies $a \twoheadrightarrow 0$ for all $a \in I$,  then it follows from Lemma \ref{rewrite_system_for_algebra_properties2} and Lemma \ref{rewrite_system_properties}\ref{rewrite_system_properties:unorm} that $\sA$ is confluent. The condition $a \twoheadrightarrow 0$ for all $a \in I$ might, however, be stronger than confluence. In other words, confluence of $\sA$ alone might not be sufficient for making $N_\sA$ into a normal form for $A/I$.
\end{remark}

\subsection{Monomial rewrite systems}

 Defining rewrite relations for $A/I$ is much more intricate than it seems at first---in particular when it comes to verifying confluence and the property $a \twoheadrightarrow 0$ for all $a \in I$. Usually, one would tend to define rewrite relations on \textit{symbolic monomials} of $A$, which we understand as symbolic concatenations of elements of $A$ symbolizing a product, and then extend these relations to \word{symbolic expressions}, i.e., symbolic monomials involving parentheses, addition and subtraction symbols. But this approach leads to the following major issue. Let $a \in A$ be an irreducible element and let $b \in A$ be a reducible element. In $A$ we have of course $a = a + b - b$ but as symbolic expressions $a$ and $a+b-b$ are distinct. Since $b$ is reducible and we extended the rewrite rules by linearity, also $a+b-b$ is reducible. This is a contradiction since in $A$ this symbolic term becomes equal to $a$ which is irreducible. Because of this one has to be very careful when defining rewrite relations for $A/I$. We can avoid this problem by defining rewrite rules on basis elements of $A$ and then extending these linearly. We formalize this in the following definition.

\begin{definition}
\label{monomial_rewrite_system}
    Let $\bia \dopgleich (a_\lambda)_{\lambda \in \Lambda}$ be an $R$-basis of $A$. In this context we call the elements $a_\lambda$ also \word{monomials} of $A$ and by \word{terms} we understand multiples $ra_\lambda$ with $r \in R \setminus \lbrace 0 \rbrace$. If $a \in A$ we say that a term $r a_\lambda$ is a \word{term of} $a$ if it occurs in the basis representation of $a$. 
    Now, suppose that $\rarr$ is a subset of $(a_\lambda)_{\lambda \in \Lambda'} \times A$ for some subset $\Lambda' \subs \Lambda$, i.e., $\rarr$ relates some monomials of $A$ with elements of $A$. We extend $\rarr$ to a relation $\rarr'$ as follows:
  \begin{enum_thm}
  \item If $a \in A$ and $ra_\lambda$ is a term of $a$ with $a_\lambda \rarr b$, then $a \rarr' a-ra_\lambda + rb$.
  \item If $a_\lambda \rarr b$ and $a_\mu = xa_\lambda y$ for some $\lambda,\mu \in \Lambda$ and $x,y \in A$, then $a_\mu \rarr' xby$. 
  \end{enum_thm}  
The first extension rule should be understood as removing the term $ra_\lambda$ from $a$ and replacing it by $rb$. The second extension rule means that we can apply rules to ``submonomials'' of monomials. We call the rules defined by $\rarr$ the \word{elementary rules} of the resulting rewrite system and call rewrite systems defined like this \word{monomial rewrite systems}.
 \end{definition}

  It is easy to see that a monomial rewrite system on an algebra $A$ satisfies Definition \ref{rewrite_system_for_algebra}\ref{rewrite_system_for_algebra:norm_ra} and Definition \ref{rewrite_system_for_algebra}\ref{rewrite_system_for_algebra:norm_ab}. So, what remains to be verified to establish it as a rewrite system for $A/I$ is Definition  \ref{rewrite_system_for_algebra}\ref{rewrite_system_for_algebra:equiv} on elementary rules (note that $I$ is a two-sided ideal). Suppose that in this case we can furthermore show that the resulting rewrite system $\sA$ for $A/I$ is terminating and that $a \twoheadrightarrow 0$ for all $a \in I$ holds. Then we know from Lemma \ref{rewrite_system_properties}\ref{rewrite_system_properties:term} that $\sA$ is normalizing and so it follows from Lemma \ref{rewrite_system_for_algebra_properties2} that $\sA$ is already uniquely normalizing. Furthermore, the module theoretic notion of normal forms in Definition \ref{normal_form_algebra_def} coincides with the rewrite system theoretic one in Definition \ref{rewrite_systems}. 

 \begin{theorem} \label{pbw_theorem_conclusion}
 Define the monomial rewrite system $\sA_{t,c,\biy}$ on $R \langle V \otimes V^* \rangle \rtimes RG$ with respect to the $R$-basis $\bix_\alpha \biy_\beta g$ by the following elementary rules:
\begin{align}
& x_j x_i \rarr x_ix_j &\tn{for } j>i \;, \quad \label{che_rewrite_rule1} \\
& y_j y_i \rarr y_iy_j & \tn{for } j>i \;, \quad \label{che_rewrite_rule2} \\
& y_i x_j \rarr x_j y_i + t\langle y_i,x_j \rangle + \sum_{s \in \mrm{Ref}_\Gamma} (y,x)_s c(s) s &  \tn{for all } i,j  \label{che_rewrite_rule3} \;. \ \hspace{4.2pt}
\end{align}
This rewrite system is terminating and satisfies $a \twoheadrightarrow 0$ for all $a \in \rI_{t,c}$. It is thus a uniquely normalizing rewrite system for $\rH_{t,c}$. 
\end{theorem}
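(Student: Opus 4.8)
The plan is to verify the three conditions needed to invoke the framework of Lemma~\ref{rewrite_system_for_algebra_properties2} together with Lemma~\ref{rewrite_system_properties}\ref{rewrite_system_properties:term}: first that $\sA_{t,c,\biy}$ is genuinely a rewrite system for $\rH_{t,c}$ (i.e.\ the compatibility condition \ref{rewrite_system_for_algebra:equiv} holds on elementary rules — the conditions \ref{rewrite_system_for_algebra:norm_ra} and \ref{rewrite_system_for_algebra:norm_ab} being automatic for a monomial rewrite system as noted in the text), second that it is terminating, and third that $a \twoheadrightarrow 0$ for all $a \in \rI_{t,c}$. The compatibility condition is immediate: each of the three elementary rules \eqref{che_rewrite_rule1}--\eqref{che_rewrite_rule3} replaces a monomial by an element congruent to it modulo the defining relations \eqref{cherednik_relation_1}--\eqref{cherednik_relation_3}, and since these relations generate $\rI_{t,c}$ as a two-sided ideal, any elementary rewrite step preserves the class modulo $\rI_{t,c}$.

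For termination I would introduce a well-founded monomial order and show every elementary rule strictly decreases it; termination of the induced rewrite relation $\rarr'$ then follows because a rewrite step on a general element replaces one term by an $R$-linear combination of strictly smaller monomials (and a rewrite step inside a submonomial likewise produces strictly smaller monomials after multiplying back by $x$ and $y$). The natural choice is to order monomials $\bix_\alpha \biy_\beta g$ first by total degree $|\alpha| + |\beta|$, then among those of fixed degree to measure ``how far from normal form'' a word in the $x_i$'s and $y_j$'s is — e.g.\ by the number of inversions: pairs that are out of the prescribed order ($x_j$ before $x_i$ with $j>i$, $y_j$ before $y_i$ with $j>i$, or any $y_i$ to the left of any $x_j$). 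Rules \eqref{che_rewrite_rule1} and \eqref{che_rewrite_rule2} keep the degree fixed and remove one inversion; rule \eqref{che_rewrite_rule3} produces the term $x_j y_i$ of the same degree but with one fewer inversion, plus terms $t\langle y_i,x_j\rangle g'$ and $(y,x)_s c(s)\, s$ of strictly smaller degree, so in all cases the order drops. One must check that the chosen order is a well-founded partial order compatible with taking submonomials and with forming the linear extension $\rarr'$, so that no infinite descending chain $a_1 \rarr' a_2 \rarr' \cdots$ can exist; this is a finiteness argument on the (lexicographically ordered) pair (degree, inversion number).

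The substantive step is $a \twoheadrightarrow 0$ for all $a \in \rI_{t,c}$. By Newman's lemma, since $\sA_{t,c,\biy}$ is terminating it suffices to prove local confluence, and once we have confluence plus termination we get a normal form $N_{\sA}$; the identification of $N_\sA$ with the free module on the $\bix^\alpha \biy^\beta g$ then gives, via the remark after Lemma~\ref{rewrite_system_for_algebra_properties2}, that $a\twoheadrightarrow 0$ on $\rI_{t,c}$. Actually the cleaner route, which is the one I would follow, is the standard ``diamond lemma'' bookkeeping of Bergman type: check that all \emph{overlap ambiguities} among the elementary rules are resolvable. The overlaps are the words $x_kx_jx_i$ with $k>j>i$, $y_ky_jy_i$ with $k>j>i$, $y_iy_jx_k$ and $y_ix_jx_k$ (overlaps of a commutation rule with \eqref{che_rewrite_rule3}), and $y_ix_jx_k$ with $j>k$ versus $y_ix_j$; for each one resolves the two possible first reductions down to a common expression. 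The $x$-only and $y$-only overlaps are the classical confluence of bubble-sort on a polynomial ring and are trivial. The real computation is the overlap $y_i x_j x_k$ (and symmetrically $y_j y_i x_k$): reducing $y_i x_j$ first versus reducing $x_j x_k$ first (when $j>k$) must land in the same normal form, and this forces exactly the Jacobi-type identity relating $\sum_s (y,x)_s c(s)\, s$ against the $G$-action on $V$ and $V^*$ — i.e.\ the content of relation \eqref{cherednik_relation_3} being consistent. This is precisely the computation carried out by Ram--Shepler and formalized in \cite{Thiel.UOn-restricted-ration}; I would either reproduce that verification or cite it, since all reflections being diagonalizable is exactly what makes $(y,x)_s$ well-defined and the identity go through over the general base ring $R$. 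The main obstacle, then, is not any one of these conceptual steps but carrying out this finite-but-lengthy overlap-resolution calculation correctly in the general $(t,c,R)$ setting; everything else is formal, supplied by the lemmas already proved.
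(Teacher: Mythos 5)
Your overall plan---compatibility of the elementary rules with $\rI_{t,c}$, termination via a (degree, inversion-count) order extended to linear combinations by a multiset argument, and then the ideal condition via resolution of the overlap ambiguities, the essential ones being $y_jy_ix_k$ and $y_ix_jx_k$---is the right one, and it is exactly the computation the paper itself delegates to \cite[\S16]{Thiel.UOn-restricted-ration} (following Ram--Shepler). The paper's proof is a bare citation, so on the termination and overlap parts you and the paper end in the same place.

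There is, however, a genuine gap in how you pass from confluence to $a \twoheadrightarrow 0$ for all $a \in \rI_{t,c}$. Your first formulation is circular: by Lemma \ref{rewrite_system_for_algebra_properties2}, the statement ``$N_\sA$ is a normal form of $A/\rI_{t,c}$'' is \emph{equivalent} to ``$a \twoheadrightarrow 0$ on $\rI_{t,c}$'', and identifying the set of irreducible elements with $N_\biy$ tells you nothing about $\rI_{t,c} \cap N_\biy = 0$---that intersection being zero \emph{is} the PBW theorem. Moreover, your appeal to the remark after Lemma \ref{rewrite_system_for_algebra_properties2} runs in the wrong direction: the remark says that $a \twoheadrightarrow 0$ on $I$ implies confluence and explicitly warns that confluence may be strictly weaker (the empty rewrite system on $A$ is terminating, confluent, and satisfies all the axioms of Definition \ref{rewrite_system_for_algebra}, yet reduces no element of $I$ to zero unless $I=0$). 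What actually closes the gap is the substantive implication of Bergman's diamond lemma: when each elementary rule $m \rarr b$ has $m-b$ ranging over a generating set of the ideal and all overlaps are resolvable, the normal-form map is $R$-linear with kernel exactly $\rI_{t,c}$. That implication is proved by an induction along the monomial order (this is where the linearity of reduction is established), not by Newman's lemma combined with the paper's formal lemmas. If you invoke the diamond lemma in that form---suitably adapted to the semidirect product, e.g.\ by passing to the free cover $R\langle \bix \cup \biy \cup \mbi{g}\rangle$---or reproduce that induction, your proof is complete; as written, the decisive step is asserted rather than justified.
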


\begin{proof}
This is a tedious but straightforward computation (see \cite[\S16]{Thiel.UOn-restricted-ration}). 
\end{proof}

It is obvious that $N_{\sA_{t,c,\biy}} = N_\biy$, and as Lemma \ref{rewrite_system_for_algebra_properties2} implies that $N_\biy$ is a normal form for $\rH_{t,c}$, this proves the PBW theorem.
  
\begin{remark}
The proof of the PBW theorem is given \cite[\S16]{Thiel.UOn-restricted-ration} by the same arguments for the much more general Drinfeld–Hecke algebras (see also \cite{RS-Classification-of-graded-Hecke}). The class of such algebras includes for example the symplectic reflection algebras by Etingof–Ginzburg \cite{EG-Symplectic-reflection-algebras}. With the straightforward adaptions of the algorithms we discuss in the next section we can thus compute in these algebras, too.
\end{remark}

 \subsection{Computing in rational Cherednik algebras}

Our approach to the PBW theorem using rewrite systems directly gives us a first  algorithm for computing in rational Cherednik algebras. As the semi-direct product is usually not supported by computer algebra systems we switch to a ``cover'' which is supported, namely the tensor algebra $R \langle \bix \cup \biy \cup \mbi{g} \rangle$, where $\mbi{g}\dopgleich (g_k)_{k=1}^r$ is a system of generators of $G$. We equip this algebra with the same rewrite rules as in Theorem \ref{pbw_theorem_conclusion} and the additional monomial rewrite rules
\begin{align}
& g_k x_i \rarr \,^{g_k} x_i g_k \quad \tn{for all } i \tn{ and } k \;, \label{che_rewrite_rule4} \\
& g_k y_i \rarr \,^{g_k} y_i g_k \quad \tn{for all } i \tn{ and } k \label{che_rewrite_rule5}\;.
\end{align}
This yields a confluent terminating rewrite system on $R \langle \bix \cup \biy \cup \mbi{g} \rangle$. It does not take care of the relations in the group, so to get PBW basis expressions we have to rewrite the ``group algebra part'' of each monomial of the normal form of an element uniquely as a word in the generators $\mbi{g}$. We can do this by choosing unique representations for every element of $G$.\\

Although straightforward, this algorithm is very inefficient as the elements in the tensor algebra can become very large and as we apply just one rule at a time. There is a much more efficient way to compute in rational Cherednik algebras.\footnote{The use of the group algebra instead of the tensor algebra was suggested and already used by Cédric Bonnafé.} Namely, the PBW theorem implies that $\rH_{t,c}$ is as an $R$-module isomorphic to the group algebra $R \lbrack V \oplus V^* \rbrack G$ of $G$ over the commutative ring $R \lbrack V \oplus V^* \rbrack$ so that we can consider $\rH_{t,c}$ as $R \lbrack V \oplus V^* \rbrack G$ with a modified multiplication. Working in $R \lbrack V \oplus V^* \rbrack G$ instead of $R \langle \bix \cup \biy \cup \mbi{g} \rangle$ is much more efficient as the commutativity of the $\bix$ and the $\biy$ is already inherent so that we do not need rewrite rules for this, and we do not have to rewrite group elements. Moreover, Lemma \ref{commutator_formula} below provides an explicit commutator formula which combines several rewrite rules and thus allows much faster computation of products. The idea for computing a product $ab$ in $\rH_{t,c} \cong R\lbrack V \oplus V^* \rbrack G$ is then to multiply each term of $a$ with each term of $b$ using the commutator formula in Lemma \ref{commutator_formula} and sum up the result. This is made precise in Algorithm \ref{alg_computing_in_cherednik}. Here, we denote by $a_g(\bix,\biy) \in R \lbrack V \oplus V^* \rbrack$ the coefficient of $g$ of an element $a \in R \lbrack V \oplus V^* \rbrack G$, so $a = \sum_{g \in G} a_g(\bix,\biy)g$. Although we have six nested loops in this algorithm it is still very efficient. In the implementation in \textsc{Champ} we also make use of a database of commutators which is updated during runtime. This leads to an additional speed-up. The most time-consuming part of the algorithm is the computation of the action of elements of $G$ on polynomials in $R \lbrack V \oplus V^* \rbrack$. 

\begin{lemma} \label{commutator_formula}
 For any $\mu \in \bbN^n$ the following relation holds in $\rH_{t,c}$:
 \begin{equation}
 \lbrack y_i, x_1^{\mu_1} \cdots x_n^{\mu_n} \rbrack = \lbrack y_i, x_1^{\mu_1} \cdots x_n^{\mu_n} \rbrack_0 + \lbrack y_i, x_1^{\mu_1} \cdots x_n^{\mu_n} \rbrack_t \;,
 \end{equation}
 where 
 \begin{equation}
\lbrack y_i, x_1^{\mu_1} \cdots x_n^{\mu_n} \rbrack_0 \dopgleich  \sum_{s \in \mrm{Ref}_\Gamma} \lbrack y_i, x_1^{\mu_1} \cdots x_n^{\mu_n} \rbrack _s s
 \end{equation}
 with
 \begin{equation}
\lbrack y_i, x_1^{\mu_1} \cdots x_n^{\mu_n} \rbrack _s \dopgleich c(s) \sum_{j=1}^n (y_i,x_j)_s x_1^{\mu_1} \cdots x_{j-1}^{\mu_{j-1}} \left( \sum_{l=0}^{\mu_j-1} x_j^l \,^s(x_j^{\mu_j-l-1}) \right) \,^s \left( x_{j+1}^{\mu_{j+1}} \cdots x_n^{\mu_n} \right) \;,
\end{equation}
 and 
 \begin{equation}
  \lbrack y_i, x_1^{\mu_1} \cdots x_n^{\mu_n} \rbrack_t \dopgleich t \sum_{j=1}^n \mu_j x_1^{\mu_1} \cdots x_{j-1}^{\mu_{j-1}} x_j^{\mu_j-1} x_{j+1}^{\mu_{j+1}} \cdots x_n^{\mu_n} \langle y_i, x_j \rangle \;.
 \end{equation}
 \end{lemma}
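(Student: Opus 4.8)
The plan is to deduce the formula entirely from the single nontrivial defining relation $(\ref{cherednik_relation_3})$ together with the commutativity relation $(\ref{cherednik_relation_1})$ and the semidirect-product relation $g v = \,^g v\, g$ for $g\in G$, $v\in V^*$ (equivalently the rewrite rule $(\ref{che_rewrite_rule4})$). The only structural input needed is the general fact that in any associative algebra the map $[y_i,-]$ is a derivation, i.e.
\[
[y_i, ab] = [y_i,a]\,b + a\,[y_i,b]\qquad\text{for all }a,b\in\rH_{t,c},
\]
which is immediate from associativity and uses nothing specific about $\rH_{t,c}$; in particular the PBW theorem is not required here, the identity being an equality in the quotient $\rH_{t,c}$ that follows from the defining relations alone.

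First I would treat a single power $x_j^{\mu_j}$. Iterating the derivation property gives
\[
[y_i, x_j^{\mu_j}] = \sum_{l=0}^{\mu_j-1} x_j^l\,[y_i,x_j]\,x_j^{\mu_j-l-1},
\]
and substituting $(\ref{cherednik_relation_3})$ for $[y_i,x_j]$ splits the right-hand side into a scalar contribution $t\langle y_i,x_j\rangle$ and a group contribution $\sum_{s}(y_i,x_j)_s c(s)\,s$. The scalar part is central, so its telescoping sum collapses to $\mu_j\, t\langle y_i,x_j\rangle\, x_j^{\mu_j-1}$; for each reflection $s$ I would push the group element past the trailing power of $x_j$ using $s\,x_j = \,^s x_j\, s$, rewriting $x_j^l\, s\, x_j^{\mu_j-l-1}$ as $x_j^l\,\,^s(x_j^{\mu_j-l-1})\, s$, which produces the inner sum $\sum_{l=0}^{\mu_j-1} x_j^l\,\,^s(x_j^{\mu_j-l-1})$ appearing in the statement.

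Next I would expand the full monomial by the derivation property,
\[
[y_i, x_1^{\mu_1}\cdots x_n^{\mu_n}] = \sum_{j=1}^n x_1^{\mu_1}\cdots x_{j-1}^{\mu_{j-1}}\,[y_i,x_j^{\mu_j}]\,x_{j+1}^{\mu_{j+1}}\cdots x_n^{\mu_n},
\]
insert the one-variable computation, and for each $s$ move the group element appearing in $[y_i,x_j^{\mu_j}]$ to the right past $x_{j+1}^{\mu_{j+1}}\cdots x_n^{\mu_n}$, which twists those factors into $\,^s(x_{j+1}^{\mu_{j+1}}\cdots x_n^{\mu_n})$; the leading factors $x_1^{\mu_1}\cdots x_{j-1}^{\mu_{j-1}}$ stay in place because they commute with $x_j$ and with the scalars by $(\ref{cherednik_relation_1})$. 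Grouping the resulting terms by the reflection $s$ yields $[y_i, x_1^{\mu_1}\cdots x_n^{\mu_n}]_s$ and hence $[y_i,\cdots]_0$, and collecting the scalar terms over $j$ gives $[y_i,\cdots]_t$, which is the asserted identity.

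The argument has no conceptual difficulty; the only point demanding care is the bookkeeping — keeping the exponent ranges in the telescoping sums correct, and applying the twisting automorphism $\,^s(-)$ only to the factors lying to the right of $s$ while letting the factors to its left, along with all the scalars $t\langle y_i,x_j\rangle$ and coefficients $(y_i,x_j)_s c(s)$, pass through unchanged.
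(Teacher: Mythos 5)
Your proposal is correct and is exactly the "straightforward proof by induction" that the paper omits: iterating the derivation property of $[y_i,-]$ is that induction, and your bookkeeping of the telescoping scalar sum and of the twisting of the factors to the right of $s$ reproduces the stated formulas precisely. Nothing is missing.
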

 
 \begin{proof}
This is a straightforward proof by induction we omit here.
 \end{proof}
 
\begin{algorithm}[htbp] 
 \caption{Computation of products in rational Cherednik algebras}
 \label{alg_computing_in_cherednik}
\KwData{Elements $a = \sum_{g \in G} a_g(\bix,\biy) g$ and $b = \sum_{h \in G} b_h(\bix,\biy) h$ of $R \lbrack V \oplus V^* \rbrack G$}
\KwResult{The product $c \dopgleich ab$ in $\rH_{t,c} \cong R \lbrack V \oplus V^* \rbrack G$}  
$c \dopgleich 0$\;
\For{$g \in G\tn{ with }a_g(\bix,\biy) \neq 0$}
{
    $d \dopgleich 0$; //this will be $\left(a_g(\bix,\biy)g\right) b$ in the end\\
    $e \dopgleich \sum_{h \in G} \,^gb_h(\bix,\biy)gh$; //$e = gb \in \rH_{t,c}$\\
    //now we compute $a_g(\bix,\biy)e = a_g(\bix,\biy)gb$\\
    \For{$t\tn{ a term of }a_g(\bix,\biy)$}
    {
        $m_t \dopgleich \tn{the monomial of }t\tn{, so }m_t=\bix^\alpha \biy^\nu\tn{ for some }\alpha,\nu \in \bbN^n$\;
        $k_t \dopgleich \tn{the coefficient of }t$\;
        $E \dopgleich e$; //this will be $\biy^\nu e$ in the end\\ 
        \For{$i \dopgleich 1\tn{ to }n$}
        {
            \For{$j \dopgleich 1\tn{ to }\nu_i$}
            {
                $l \dopgleich 0$; //this will be $y_i E$ \\
                \For{$h \in G\tn{ with }E_h(\bix,\biy) \neq 0$}
                {
                    \For{$u\tn{ a term of }E_h(\bix,\biy)$}
                    {
                        $m_u \dopgleich \tn{the monomial of }u\tn{, so }m_u=\bix^\mu \biy^\beta\tn{ for some }\mu,\beta \in \bbN^n$\;
                        $k_u \dopgleich \tn{the coefficient of }u$\;   
                        $l \dopgleich l + k_u( \bix^\mu y_i \biy^\beta + \lbrack y_i,\bix^\mu \rbrack_t \biy^\beta + \sum_{s \in \mrm{Ref}_\Gamma} \lbrack y_i,\bix^\mu \rbrack_s \,^s\biy^\beta sh)$\;
                        //the second summand above is simply the PBW expression \\//for $k_u y_i \bix^\mu \biy^\beta h =$ from from Lemma \ref{commutator_formula} \\
                    }
                }
                $E \dopgleich l$\;
            }
        }
        $d \dopgleich d + k_t \bix^\alpha E$; //$d \dopgleich d + te$\\
    }
    $c \dopgleich c + d$; //$c \dopgleich c + a_g(\bix,\biy)gb$\\
}
\Return{$c$}\;
\end{algorithm}

\subsection{Poisson brackets} \label{poisson_brackets}
One of the motivations for devising and implementing algorithms for computing in rational Cherednik algebras is that this allows us to explicitly compute Poisson brackets of central elements of $\rH_{0,c}$. We give a non-standard (but equivalent) definition of the Poisson bracket here as this is more efficient for computations (see \cite[5.4.A]{Bonnafe.C;Rouquier.R13Cellules-de-Calogero} for the usual definition). Let $\wt{R} \dopgleich D \otimes_K R$, where $D \dopgleich K \lbrack \eps \rbrack/\langle \eps^2 \rangle$ is the ring of dual numbers. We denote the image of $\eps$ in $D$ again by $\eps$. The map $c:\sC_\Gamma \rarr R$ can of course also be considered as mapping to $\wt{R}$ and so the $\wt{R}$-algebra $\rH_{\eps,c}$ is defined. By the PBW theorem we have $\rH_{\eps,c} \cong D \otimes_K \rH_{0,c}$ as $R$-modules and we have a canonical embedding $\wt{\cdot}:\rH_{0,c} \hookrightarrow \rH_{\eps,c}$ of $R$-modules. There is a canonical surjective $R$-module morphism $\wt{R} \rarr R$ sending $\eps \otimes 1$ to $1$ and $1 \otimes r$ to $r$. This map induces a surjective $R$-module morphism $q:\rH_{\eps,c} \twoheadrightarrow \rH_{0,c}$. Now, the Poisson bracket of $a,b \in \rZ(\rH_{0,c})$ is defined as $\lbrace a,b \rbrace \dopgleich q(\lbrack \wt{a},\wt{b} \rbrack)$. As the implementation of rational Cherednik algebras in \textsc{Champ} supports general base rings and parameters, we are also able to compute Poisson brackets in \textsc{Champ}.

\section{Restricted rational Cherednik algebras} \label{restricted_cherednik}

\begin{parani}
Besides the capability of performing computations in rational Cherednik algebras it is one aim of \champ\ to compute representation theoretic properties of \textit{restricted} rational Cherednik algebras. These algebras—which were first seriously studied by Gordon \cite{Gor-Baby-verma}—are finite-dimensional quotients of $\rH_{0,c}$ by a centrally generated ideal and they possess (partially established, partially conjectural) relations to Hecke algebras. These relations are one reason for studying (restricted) rational Cherednik algebras. In this section, we will review the basic properties of these algebras, explain what representation theoretic problems we are interested in, and address some computational issues. We include a quick review of Martino's conjecture to be very precise about what we computed and to ensure that these computations yield proofs of this conjecture in the cases under consideration.
\end{parani}

\subsection{Restricted rational Cherednik algebras} \label{rrca}

The $\bbN$-graded ring 
\[
\rZ_\Gamma \dopgleich K \lbrack V \rbrack^G \otimes_K K \lbrack V^* \rbrack^G \subs K \lbrack V \oplus V^* \rbrack^G
\]
of \word{bi-invariants} maps under the PBW morphism into the center of $\rH_{0,c}$ and embeds the scalar extension $\rZ_\Gamma^R$ as a central subalgebra of $\rH_{0,c}$. For $K = R = \bbC$ this was proven by Etingof–Ginzburg \cite{EG-Symplectic-reflection-algebras}, and Gordon's proof \cite{Gor-Baby-verma} in this case also works without modifications in our general setting. We can thus view $\rH_{0,c}$ as a $\rZ_\Gamma^R$-algebra. Note that since $R$ is a flat $K$-module, the scalar extension $\rZ_\Gamma^R$ is simply given by replacing $K$ by $R$ above. As the extension $K \lbrack V \rbrack^G \subs K \lbrack V \rbrack$ is finite (see \cite[12.27]{GorWed10-Algebraic-geomet}), the PBW theorem implies that $\rH_{0,c}$ is a finite $\rZ_\Gamma^R$-module. The finiteness implies (see \cite[\S6, \S17]{Thiel.UOn-restricted-ration}) that we have a decomposition
\begin{align}  \label{simple_decomposition}
\Simp(\rH_{0,c}) = \coprod_{\fm \in \Max(\rZ_\Gamma^R)} \Simp(\rH_{0,c}(\fm))
\end{align}
of the set of isomorphism classes of simple modules, where $\Max$ denotes the maximal ideal spectrum and $\rH_{0,c}(\fm) \dopgleich \rH_{0,c}/\fm\rH_{0,c}$ is the \word{specialization} of $\rH_{0,c}$ in $\fm \in \Max(\rZ_\Gamma^R)$. This decomposition follows essentially from the fact that maximal ideals and left primitive ideals coincide in $\rH_{0,c}$ as it is a PI ring. The advantage is that on the right hand side we have finite-dimensional algebras over fields which might be easier to study than $\rH_{0,c}$ itself. \\

Let
\[
\fa_{\Gamma}^R \dopgleich (\rZ_\Gamma^R)_+ = (R \lbrack V \rbrack^G_+ \otimes_R R \lbrack V ^* \rbrack^G) + (R \lbrack V \rbrack^G \otimes_R R \lbrack V^* \rbrack^G_+) 
\]
be the augmentation ideal of $\rZ_\Gamma^R$. The quotient $\ol{\rH}_c \dopgleich \rH_{0,c}/\fa_\Gamma^R \ol{\rH}_c$ is called the \word{restricted rational Cherednik algebra} of $\Gamma$ in $c$. Note that $\rZ_\Gamma^R/\fa_\Gamma^R \cong R$, so $\fa_\Gamma^R$ is maximal if and only if $R$ is a field. In this case, $\ol{\rH}_c$ is one of the specializations in the decomposition (\ref{simple_decomposition}). 

Recall that the \word{coinvariant algebra} $K \lbrack V \rbrack_G$ of $\Gamma$ is the quotient of $K \lbrack V \rbrack$ by the \word{Hilbert ideal} $\fh_\Gamma$, which is the ideal in $K \lbrack V \rbrack$ generated by the augmentation ideal $K \lbrack V \rbrack^G_+$ of $K \lbrack V \rbrack^G$. It follows at once from the PBW theorem that the PBW morphism induces an $R$-module isomorphism
\begin{align} \label{che_res_triang_dec}
R \lbrack V \rbrack_G \otimes_R RG \otimes_R R \lbrack V^* \rbrack_G \cong \ol{\rH}_c \;,
\end{align}
implying that $\ol{\rH}_c$ is a free $R$-module with
\[
\dim_R \ol{\rH}_c = \dim_R R \lbrack V \rbrack_G \cdot |G| \cdot \dim_R R \lbrack V^* \rbrack_G \;.
\]
In case both $K \lbrack V \rbrack^G$ and $K \lbrack V^* \rbrack^G$ are polynomial (this holds for example in the non-modular setting by a theorem by Bourbaki–Chevalley–Serre as $\Gamma$ is a reflection group), the extensions $K \lbrack V \rbrack^G \subs K \lbrack V \rbrack$ and $K \lbrack V^* \rbrack^G \subs K \lbrack V^* \rbrack$ are free of dimension equal to $|G|$. This implies that in this case $\dim_R \ol{\rH}_c = |G|^3 = \dim_{\rZ_\Gamma^R}\rH_{0,c}$. \\

\subsection{Computing in restricted rational Cherednik algebras} 
 \label{monomial_basis_coinv}
 
Fix a Gröbner basis of the Hilbert ideal of $\Gamma$ with respect to some monomial order. As in Example \ref{example_groebner_basis_normal_form} this allows us to compute a monomial basis $(\ol{\bix}^\lambda)_{\lambda \in \Lambda}$ of the coinvariant algebra $K \lbrack V \rbrack_G$, where $\Lambda \subs \bbN^n$ is some finite subset and $\ol{\bix} \dopgleich (\ol{x}_i)_{i=1}^n$ are the images of the $x_i \in K \lbrack V \rbrack$ in $K \lbrack V \rbrack_G$. Similarly, we obtain a monomial basis $(\ol{\biy}^\sigma)_{\sigma \in \Sigma}$ of $K \lbrack V^* \rbrack_G$. Then by the above $\ol{\rH}_c$ is a free $R$-module with basis $(\ol{\bix}^\lambda \ol{\biy}^\sigma g)_{\lambda \in \Lambda, \sigma \in \Sigma, g \in G}$ and we call a basis of this form a \word{PBW basis} of $\ol{\rH}_c$.  
Algorithm \ref{alg_computing_in_cherednik} can easily be modified to compute PBW basis representations of products in $\ol{\rH}_c$—we just have to work in the group algebra $(R \lbrack V \rbrack_G \otimes_R R \lbrack V^* \rbrack_G) G$. This is again supported by \textsc{Champ}.

\subsection{Representation theory} \label{rrca_represent_theory}

Now, we turn our attention to representation theoretic problems of $\ol{\rH}_c$ which are originally due to Gordon \cite{Gor-Baby-verma}. First of all, note that $R \langle V \oplus V^* \rangle \rtimes RG$ is naturally a $\bbZ$-graded $R$-algebra by putting $V^*$ in degree $1$, $G$ in degree $0$, and $V$ in degree $-1$. The elements in (\ref{cherednik_relation_1}) to (\ref{cherednik_relation_3}) defining the ideal $\rI_{0,c}$ are all homogeneous so that $\rH_{0,c}$ inherits this $\bbZ$-grading. Since the Hilbert ideals are homogeneous, it follows moreover that the restricted rational Cherednik algebra $\ol{\rH}_c$ also inherits this $\bbZ$-grading. 
Gordon \cite{Gor-Baby-verma} observed that the triangular decomposition (\ref{che_res_triang_dec}) of $\ol{\rH}_c$ governs its representation theory by employing a general theory of Holmes–Nakano \cite{HN-Brauer-type-rec}. First note that due to the PBW theorem both the $R$-algebras $\ol{\rH}_{c,m} \dopgleich RG$ and $\ol{\rH}_{c,r} \dopgleich RG \ltimes R \lbrack V^* \rbrack_G$ naturally embed as subalgebras in $\ol{\rH}_c$. This is the ``middle part'' and the ``right Borel subalgebra'' of the triangular decomposition (\ref{che_res_triang_dec}), respectively. Mapping elements of $V$ to zero yields a surjective algebra morphism $q_{c,r}:\ol{\rH}_{c,r} \twoheadrightarrow \ol{\rH}_{c,m}$ and by $q_{c,r*}$ we denote the induced inflation functor $\cat{(gr)mod}{\ol{\rH}_{c,m}} \rarr \cat{(gr)mod}{\ol{\rH}_{c,r}}$. The key tool is now the \word{Verma functor}
\[
\Delta_c \dopgleich \ol{\rH}_c \otimes_{\ol{\rH}_{c,r}} q_{c,r*}(-) : \cat{(gr)mod}{\ol{\rH}_{c,m}} \rarr \cat{(gr)mod}{\ol{\rH}_c} 
\]
between categories of finitely generated (graded) modules. It is not hard to see that 
\begin{equation}
\Delta_c(W) \cong R \lbrack V \rbrack_G \otimes_R W
\end{equation}
as $R$-modules provided that $W$ is free as an $R$-module (see \cite{HN-Brauer-type-rec} or \cite[\S18]{Thiel.UOn-restricted-ration}). 

Now, suppose that $R$ is a field and that $KG$ splits (the latter holds for example if $K$ is of characteristic zero by a theorem by Benard \cite{Ben-Schur-indices-and-splitti-0}). Although Holmes–Nakano \cite{HN-Brauer-type-rec} assume for their theory an algebraically closed base field, their arguments also work when the algebra is just split (see \cite[\S18]{Thiel.UOn-restricted-ration}) and show that for each simple $KG$-module $\lambda$ the corresponding Verma module $\Delta_c(\lambda) \dopgleich \Delta_c(\lambda^R)$ of $\ol{\rH}_c$ is an indecomposable module with simple head $\rL_c(\lambda)$ and that  $(\rL_c(\lambda))_{\lambda \in \Simp(KG)}$ is a system of representatives of the simple $\ol{\rH}_c$-modules. The Verma module $\Delta_c(\lambda)$ is naturally graded and it has been proven in \cite{HN-Brauer-type-rec} that its radical is a graded submodule. Hence, $\rL_c(\lambda)$ is naturally graded, too. Arguments by Bonnafé–Rouquier \cite[Proposition 9.2.5]{Bonnafe.C;Rouquier.R13Cellules-de-Calogero} furthermore show that $\ol{\rH}_c$ itself splits. There is now a natural correspondence between simple $KG$-modules and simple $\ol{\rH}_c$-modules and so the distribution of simple $\ol{\rH}_c$-modules into the blocks of $\ol{\rH}_c$ yields a partition $\mrm{CM}_c$ of the set of simple $KG$-modules whose members are called the \word{Calogero–Moser $c$-families}. 

\subsection{Gordon's questions}

 \label{gordon_problems}
Gordon formulated in \cite[\S7]{Gor-Baby-verma} the following questions concerning the representation theory of $\ol{\rH}_c$ for a parameter $c$ with values in an extension field $R$ of $K$:
\begin{enum_thm}
\item \label{gordon_problems:g_char} What is the graded $G$-character of the simple modules $\rL_c(\lambda)$? This includes knowing their dimensions and their Poincaré series.
\item \label{gordon_problems:verma_dec} What are the composition factors of the Verma modules $\Delta_c(\lambda)$?
\item What are the Calogero–Moser $c$-families? \\
\end{enum_thm}
These questions are so far only studied for $K=R=\bbC$ and we cannot go into details about what is already known in this case (see \cite[16.2, 16.4]{EG-Symplectic-reflection-algebras}, \cite{Finkelberg.M;Ginzburg.V02Calogero-Moser-space}, \cite[6.4, 7.3]{Gor-Baby-verma}, \cite[\S3.3]{Bel-Singular-CM}, \cite{Mar-CM-Rouqier-partition}, \cite{Martino.M11Blocks-of-restricted}, and \cite{Thiel.UOn-restricted-ration}). The point is that almost nothing is known for \textit{exceptional} complex reflection groups and this was one reason for the development of \champ.

\subsection{The generic situation} \label{problem_reductions}
The above problems are formulated for parameters $c$ with values in an extension field $R$ of $K$, i.e., for points of the affine $K$-scheme $\fR_\Gamma \dopgleich \bbA_K^{\# \sC_\Gamma}$. This infinite amount of parameters would be a serious issue for a computational approach but the following two facts allow us to reduce this to finitely many problems. First of all, it is proven in \cite{Thi-Decomposition-morphi2014} that decomposition morphisms are generically trivial. This means essentially that once we know the solution to \S\ref{gordon_problems}\ref{gordon_problems:g_char} and \S\ref{gordon_problems}\ref{gordon_problems:verma_dec} for the generic point $\bic$ of $\fR_\Gamma$—i.e., $\bic$ is the family of indeterminates of the rational function field $K( (c_s)_{s \in \sC_\Gamma})$—then we know the solution for all $c$ in a non-empty open subset of $\fR_\Gamma$. This \word{generic situation} is really the starting point of computational considerations and is supported by \champ. Similarly, it is proven in \cite{Bonnafe.C;Rouquier.R13Cellules-de-Calogero} (see also \cite[\S11]{Thiel.UOn-restricted-ration}) that blocks show the same behavior, meaning that once we know the generic Calogero–Moser families $\mrm{CM}_\bic$, we know them for all $c$ in a non-empty open subset of $\fR_\Gamma$. After the generic situation is understood, we have to determine the locus of ``exceptional parameters'' and continue the above process. This is exactly how we proceed for the groups $\rG_4$,  $\rG_{13}$, and $\rG_{20}$ to compute the answers to Gordon's questions for all parameters.

\subsection{Martino's conjecture} \label{martino_conjecture_subsection}

 \label{martino_prep}
Before we discuss our approach to the computational solution of Gordon's questions, let us first explain why the Calogero–Moser families are interesting. To this end, we need a different type of parameters for rational Cherednik algebras due to Ginzburg–Guay–Opdam–Rouquier \cite{GinGuaOpd-On-the-category-scr-O-for-0}. Let $\sA_\Gamma$ be the set of $G$-orbits of reflection hyperplanes of $\Gamma$. For a reflection hyperplane $H$ of $\Gamma$ the stabilizer subgroup $G_H$ is cyclic of some order $e_H$ prime to the characteristic of $K$. This order is constant along the $G$-orbit $\Omega$ of $H$ so that we can denote it by $e_\Omega$. We denote by $\boldsymbol{\Omega}_\Gamma$ the set of pairs $(\Omega,j)$ with $\Omega \in \sA_\Gamma$ and $1 \leq j \leq e_\Omega-1$, and denote by $\ol{\boldsymbol{\Omega}}_\Gamma$ the set of pairs $(\Omega,j)$ with $0 \leq j \leq e_\Omega-1$. Let $\ol{\fR}_\Gamma$ be the affine $K$-scheme $\bbA_K^{\#\ol{\boldsymbol{\Omega}}_\Gamma}$. For $k\in \ol{\fR}_\Gamma(R)$ we now define a function $c_k:\sC_\Gamma \rarr R$ by
\begin{align} \label{ggor_parameter}
c_k(s) \dopgleich \sum_{j=0}^{e_{\Omega_s-1}} \mrm{det}(s)^j \left( k_{\Omega_s,j+1} - k_{\Omega_s,j} \right) \;,
\end{align}
where $\Omega_s$ is the $G$-orbit of the reflection hyperplane of $s$ and we consider the index $j$ always modulo $e_{\Omega_s}$. We set $\rH_{0,k} \dopgleich \rH_{0,c_k}$. It is not hard to see that (\ref{ggor_parameter}) yields a surjective $R$-linear map $\Phi_\Gamma(R): \ol{\fR}_\Gamma(R) \rarr \fR_\Gamma(R)$, and that this defines a surjective $K$-scheme morphism $\Phi_\Gamma:\ol{\fR}_\Gamma \rarr \fR_\Gamma$. We can thus think of $\ol{\fR}_\Gamma$ as an artificial extension of the parameter space for restricted rational Cherednik algebras of $\Gamma$. On $\ol{\fR}_\Gamma$ we define an involution $(\cdot)^\sharp$ by $k^\sharp \dopgleich (k_{\Omega,-j})$. The closed subscheme $\ol{\fR}_\Gamma^0$ of $\ol{\fR}_\Gamma$ consisting of all $k$ with $k_{\Omega,0} = 0$ is stable under this involution and we call its points \word{Cherednik parameters of GGOR type} for $\Gamma$. Note that $\Phi_\Gamma$ restricts to an isomorphism between $\ol{\fR}_\Gamma^0$ and $\fR_\Gamma$ so that this can be considered as a re-parametrization of $\fR_\Gamma$. \\

Now, assume that $K$ is of characteristic zero and that $\Gamma$ is irreducible. Chlouveraki's \cite{Chl09-Blocks-and-famil} essential hyperplanes define a union $\ol{\sE}_\Gamma$ of hyperplanes in $\ol{\fR}_\Gamma$ defined by integral equations, and attached to any point $k \in \ol{\fR}_\Gamma$ is a partition $\mrm{Rou}_k$ of the simple $KG$-modules whose members are called the \word{Rouquier $k$-families}. We cannot go into details about Rouquier families here (see \cite{BroKim02-Familles-de-cara}, \cite{MalRou-Familles-de-caracteres-de-0}, \cite{Chl09-Blocks-and-famil}, and in particular \cite{Bonnafe.C;Rouquier.R13Cellules-de-Calogero} for the most general discussion) and just note how we can define them for a general base field $K$ of characteristic zero instead of just $K=\bbC$. To this end, we have to choose a realization $\Gamma'$ of $\Gamma$ over the complex numbers, which is possible as $\Gamma$ admits a realization over its character field. When doing this we have to keep track of the orbits of hyperplanes of reflections to avoid changing the parameters. Then Chlouveraki's theory defines the essential hyperplanes in $\ol{\fR}_{\Gamma'}$  and the Rouquier $k$-families for any $k \in \ol{\fR}_{\Gamma'}(\bbC)$. These families are already uniquely determined by the essential hyperplanes $k$ lies on. This and the fact that the essential hyperplanes are defined by integral equations allows us to transport the essential hyperplanes to $\ol{\fR}_\Gamma$  and to define Rouquier families for any point of $\ol{\fR}_\Gamma$. We remark that for the definition of Rouquier families we tacitly assume the validity of some standard assumptions about Hecke algebras (see \cite[4.2.3]{Chl09-Blocks-and-famil}) which are not known to hold for all exceptional complex reflection groups. The interest in Calogero–Moser families is now justified by the following conjecture.

\begin{conjecture}[Martino, \cite{Mar-CM-Rouqier-partition}] \label{martino_conjecture}
Assume that $K$ is of characteristic zero and that $\Gamma$ is irreducible. The following holds:
\begin{enum_thm}
\item $\mrm{Rou}_{k^\sharp}$ is a refinement of $\mrm{CM}_k \dopgleich \mrm{CM}_{c_k}$ for any $k \in \ol{\fR}_\Gamma$.
\item There is a non-empty open subset $U$ of $\ol{\fR}_\Gamma$ such that $\mrm{Rou}_{k^\sharp} = \mrm{CM}_k$ for all $k \in U$.
\end{enum_thm}
\end{conjecture}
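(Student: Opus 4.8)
Since Conjecture \ref{martino_conjecture} is, as stated, an assertion about arbitrary irreducible $\Gamma$ and no uniform theoretical argument is available, the plan is not to prove it in general but to \emph{verify} it for individual (exceptional) complex reflection groups by reducing each part to a finite computation that \champ\ can carry out. Throughout, fix such a $\Gamma$, identify $\fR_\Gamma$ with $\ol{\fR}_\Gamma^0$ via $\Phi_\Gamma$ as above, and write $\bic$ for the generic point of $\fR_\Gamma$, i.e. the tuple of indeterminates of $R \dopgleich K( (c_s)_{s \in \sC_\Gamma} )$.

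For part (ii), I would first invoke the reductions of \S\ref{problem_reductions}: decomposition morphisms are generically trivial \cite{Thi-Decomposition-morphi2014} and blocks exhibit the same generic behaviour \cite{Bonnafe.C;Rouquier.R13Cellules-de-Calogero}, so it suffices to prove $\mrm{Rou}_{\bic^\sharp} = \mrm{CM}_\bic$ at the single point $\bic$. The Rouquier side is the generic Rouquier partition, which for exceptional $\Gamma$ is available from Chlouveraki's description by essential hyperplanes \cite{Chl09-Blocks-and-famil}. For the Calogero--Moser side I would realize $\ol{\rH}_\bic$ over $R$ using the PBW machinery of \S\ref{monomial_basis_coinv} (Algorithm \ref{alg_computing_in_cherednik}, adapted to the coinvariant algebra), construct every Verma module $\Delta_\bic(\lambda)$, and compute its simple head $\rL_\bic(\lambda)$ together with its decomposition matrix using the \textsc{ModFinder} strategy of \S\ref{ffspecs} to \S\ref{modfinder}. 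The families $\mrm{CM}_\bic$ are then the connected components of the graph on $\Simp(KG)$ whose edges record common constituents of the Verma modules; comparing this partition with $\mrm{Rou}_{\bic^\sharp}$ settles part (ii) for $\Gamma$.

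Part (i) demands the refinement relation ``$\mrm{Rou}_{k^\sharp}$ refines $\mrm{CM}_k$'' at \emph{every} $k \in \ol{\fR}_\Gamma$. Here I would use that $\mrm{Rou}_{k^\sharp}$ is constant on the strata of the essential-hyperplane arrangement $\ol{\sE}_\Gamma$, so only finitely many Rouquier partitions occur, and combine this with a recursive stratification of $\ol{\fR}_\Gamma$ on the Calogero--Moser side: compute $\mrm{CM}$ generically, locate the ``exceptional locus'' (the hyperplanes off which the block structure is constant), and recurse on each such hyperplane and, inductively, on their intersections, computing $\mrm{CM}$ at the generic point of every stratum with \textsc{ModFinder}. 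Passing to the common refinement with $\ol{\sE}_\Gamma$ leaves finitely many strata on each of which both partitions are explicitly known, and one checks the refinement relation stratum by stratum; this is the scheme carried out for $\rG_4$, $\rG_{12}$, $\rG_{13}$, $\rG_{20}$, $\rG_{22}$, $\rG_{23}$.

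The two genuine obstacles are the following. First, \textsc{ModFinder} is a Las Vegas algorithm (\S\ref{heads}): it is not guaranteed to terminate with an answer, and before it can be applied at all one must produce an integral structure — a suitable finite field specialization — of $\ol{\rH}_\bic$, a problem for which \S\ref{ffspecs} provides only a partial solution; that it succeeds for Verma modules is something one can only establish experimentally. Second, and more structurally, pinning down the exceptional locus \emph{exactly} is delicate: one needs an a priori guarantee that the block structure is constant off a \emph{specific} finite union of hyperplanes, and the case of $\rG_8$ — where an extra hyperplane appears beyond those of $\ol{\sE}_\Gamma$ — shows that this union cannot simply be read off from the Hecke-algebra side, so the recursion has to be driven by bounds (e.g. on the denominators of the relevant scalars) together with explicit checks along all candidate hyperplanes.
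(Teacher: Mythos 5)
The statement you were asked about is a \emph{conjecture}; the paper contains no proof of it and does not claim one. Its contribution is exactly the programme you describe: a computational verification for specific exceptional groups, and your outline matches the paper's method closely — reduce part (ii) to the generic point via generic triviality of decomposition morphisms and of block structure (\S\ref{problem_reductions}), compute the Calogero–Moser side with Verma modules and \textsc{ModFinder} through finite field specializations, compare with Chlouveraki's Rouquier families, and handle part (i) for the two-parameter groups by recursing onto the exceptional hyperplanes. Your list of obstacles (Las Vegas character, integral structures, the $\rG_8$ extra hyperplane showing the exceptional locus cannot be read off the Hecke side) is also the paper's.

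One point deserves sharpening. The partition you extract from the constituent graph of the Verma modules is the partition $\mrm{Ver}_c$ into \emph{Verma families}, and a priori this only satisfies $\mrm{Ver}_c \leq \mrm{CM}_c \leq \mrm{Eu}_c$ (Verma modules are indecomposable, hence sit in a single block — that gives one refinement, not equality). To conclude $\mrm{Ver}_c = \mrm{CM}_c$ you need either the observed collapse $\mrm{Ver}_c = \mrm{Eu}_c$ of the sandwich, with $\mrm{Eu}_c$ computed separately from central characters on the Euler element (\S\ref{euler_families}), or the Bonnafé–Rouquier theorem \cite[\S13.4]{Bonnafe.C;Rouquier.R13Cellules-de-Calogero} that in characteristic zero the Verma families always equal the Calogero–Moser families. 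Without one of these, the connected components of your graph only give a lower bound on the blocks, and the comparison with $\mrm{Rou}_{k^\sharp}$ would not be conclusive.
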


We call the first part of the conjecture the \word{special parameter conjecture} and the second part the \word{generic parameter conjecture}. Because restricted rational Cherednik algebras and cyclotomic Hecke algebras always split, it is enough to consider some particular realization of each type of complex reflection groups in the Shephard–Todd classification and then prove the conjecture for $K$-points. 
Furthermore, we note that for $k \in \ol{\fR}_\Gamma$ the $k$-cyclotomic Hecke algebra is naturally isomorphic to the $k^0$-cyclotomic Hecke algebra, where $k^0$ is obtained from $k$ by setting $k_{\Omega,0}$ to zero for all $\Omega$ (this follows from \cite[2.1.13]{Bonnafe.C;Rouquier.R13Cellules-de-Calogero}). Hence, we can equivalently consider the conjecture just for points of $\ol{\fR}_\Gamma^0$ as originally formulated by Martino. 
Due to the behavior of Calogero–Moser families explained in \S\ref{problem_reductions} and due to the behavior of Rouquier families explained in \S\ref{martino_prep} the generic parameter conjecture is equivalent to $\mrm{Rou}_{\bik} = \mrm{CM}_\bik$, where $\bik$ is the generic point of $\ol{\fR}_\Gamma^0$.  \\

Martino's conjecture is known to be true for symmetric and imprimitive complex reflection groups by \cite{Mar-CM-Rouqier-partition}, \cite{Bel-Singular-CM}, and \cite{Martino.M11Blocks-of-restricted}. The generic parameter conjecture is known to be true for $\rG_4$ by \cite{Bel-Singular-CM}, and also for $\rG_5$, $\rG_6$, $\rG_8$, $\rG_{10}$, $\rG_{23}$, $\rG_{24}$, and $\rG_{26}$ by \cite{Thiel.U13A-Counter-Example-to}. It was shown in \cite{Thiel.U13A-Counter-Example-to}, however, that the generic parameter conjecture fails for $\rG_{25}$. In all cases where this conjecture is known to hold it was proven by determining the Calogero–Moser families and comparing them to the Rouquier families which have been determined by Chlouveraki \cite{Chl09-Blocks-and-famil}. So far, there is no theoretical explanation for this connection, and the counter-example in case $\rG_{25}$ makes it even harder to understand the situation. 

\subsection{Euler families}  \label{euler_families}

Bonnafé–Rouquier \cite{Bonnafe.C;Rouquier.R13Cellules-de-Calogero} have  pointed out a neat argument why there could exist a connection between Calogero–Moser families and Rouquier families at all.  First of all, the \word{Euler element} of $\rH_{0,c}$, introduced in  \cite{GinGuaOpd-On-the-category-scr-O-for-0}, is defined as
\begin{equation}
\mrm{eu}_c = \sum_{i=1}^n y_ix_i + \sum_{s \in \mrm{Ref}_\Gamma} \frac{1}{\eps_s-1} c(s) s = \sum_{i=1}^n x_iy_i + \frac{\eps_s}{\eps_s-1}c(s) s \;,
\end{equation}
where as usual $(y_i)_{i=1}^n$ is a basis of $V$ with dual basis $(x_i)_{i=1}^n$, and where $\eps_s$ denotes the non-trivial eigenvalue of $s$. The definition does not depend on the choice of a basis. This element is known to be central and its image in $\ol{\rH}_c$ is again a non-trivial central element. Let $\Omega_\lambda^c$ be the central character of the simple $\ol{\rH}_c$-module $\rL_c(\lambda)$. Then the values of these characters on the Euler element yield a partition $\mrm{Eu}_c$ of the simple $KG$-modules which is coarser than $\mrm{CM}_c$. We call its members the \word{Euler $c$-families}. It is proven in \cite[10.2.2]{Bonnafe.C;Rouquier.R13Cellules-de-Calogero} that for $k \in \ol{\fR}_\Gamma$ the equality $\Omega_\lambda^k(\mrm{eu}_k) = c_\lambda(k^\sharp)$ holds, where $c_\lambda(k^\sharp)$ is a constant multiple of the ``$q$-logarithm'' of the value of the central character of the simple module belonging to $\lambda$ of the $k^\sharp$-cyclotomic Hecke algebra on the central element $\pi$ coming from the center of the braid group of $\Gamma$ (see \cite[\S2.2]{Bonnafe.C;Rouquier.R13Cellules-de-Calogero}). These values define similarly a partition $\Pi_{k^\sharp}$ of the simple $KG$-modules which is coarser than $\mrm{Rou}_{k^\sharp}$ and equal to $\mrm{Eu}_k$. We thus have
\begin{equation}
\mrm{CM}_k \leq \mrm{Eu}_k = \Pi_{k^\sharp} \geq \mrm{Rou}_{k^\sharp} \;,
\end{equation}
where $\leq$ denotes refinement. Of course, this does not explain why $\mrm{CM}_k \geq \mrm{Rou}_{k^\sharp}$ should hold.

\subsection{Verma families}  \label{verma_families}
Next to the Euler families there is another type of families giving a further approximation of the Calogero–Moser families. Namely, for a fixed simple $KG$-module $\lambda$ we collect all constituents of $\Delta_c(\lambda)$. For each of these constituents $S_\mu$ we again collect all constituents of $\Delta_c(\mu)$ etc. This process stabilizes and gives us a partition $\mrm{Ver}_c$ of the simple $KG$-modules whose members we call the \word{Verma $c$-families}. As Verma modules are indecomposable, these are always contained in a family coming from a block of $\ol{\rH}_c$, i.e., each Verma family is contained in a Calogero–Moser family, so $\mrm{Ver}_c \leq \mrm{CM}_c$. We thus get a tower
\begin{equation}
\mrm{Ver}_c \leq \mrm{CM}_c \leq \mrm{Eu}_c 
\end{equation}
giving us approximations of $\mrm{CM}_c$ from two sides. The Euler families are easily computable using the characters of the simple $KG$-modules (see \cite{Bonnafe.C;Rouquier.R13Cellules-de-Calogero} or \cite{Thiel.U13A-Counter-Example-to}). The Verma families in turn can be computed in many cases by the methods we discuss in the next paragraphs. Usually, the above tower collapsed, i.e., the Verma families were equal to the Euler families and thus equal to the Calogero–Moser families. %

\begin{remark}
Recently, Bonnafé–Rouquier \cite[\S13.4]{Bonnafe.C;Rouquier.R13Cellules-de-Calogero} haven proven that in case $K$ is of characteristic zero, the Verma families are in fact equal to the Calogero–Moser families (we observed this property before in our explicit computations). This is now the theoretical foundation showing that the key to determining the Calogero–Moser families are the Verma families.
\end{remark}

\section{Computations with Verma modules}

After we discussed the main problems we are interested in—namely Gordon's questions—let us go back to computational issues. Clearly, we first have to find an explicit description of the Verma modules for any computational approach to these problems. We discuss this here along with some aspects about efficient computation of Verma modules. The main problem to solve Gordon's questions is then to be able to compute decomposition matrices of Verma modules. We discuss an abstract strategy in \S\ref{abstract_idea} which we will turn into a serious method in the following three paragraphs.

\subsection{Computing Verma modules}

Let $\rho:G \rarr \End_K(W)$ be a finite-dimensional $K$-representation. Then the Verma module $\Delta_c(\rho)$ is uniquely determined by the action of the generators $\bix \cup \biy \cup \mbi{g} $ of $\ol{\rH}_c$, where $\mbi{g}$ is a generating system of $G$, and as $\Delta_c(\rho)$ is free and finitely generated as an $R$-module, these actions are described by some matrices. In this way a Verma module can be represented in the computer once we have chosen bases and understood the action. To this end, we choose besides a basis $\biy \dopgleich (y_i)_{i=1}^n$ of $V$ with dual basis $\bix \dopgleich (x_i)_{i=1}^n$ and a generating system $\mbi{g} \dopgleich (g_i)_{i=1}^r$ of $G$ also a monomial basis $\ol{\bix}^\Lambda \dopgleich (\ol{\bix}^\lambda)_{\lambda \in \Lambda}$ of $K \lbrack V \rbrack_G$ as described in \S\ref{monomial_basis_coinv}. Furthermore, we fix a basis $\biw \dopgleich (w_k)_{k=1}^d$ of $W$. Then an $R$-basis of $\Delta_c(\rho) \cong R \lbrack V \rbrack_G \otimes_R W$ is formed by the elements $\ol{\bix}^\lambda \otimes w_k$, and with respect to this basis we now describe the action of the generators. \\

First, let us consider the action of $x_i$ on $\Delta_c(\rho)$. We have 
\[
x_i.(\ol{\bix}^\mu \otimes w_k) = (x_i \ol{\bix}^\mu) \otimes w_k \;.    
\]
Hence, if the basis representation of $x_i \ol{\bix}^\mu \in K \lbrack V \rbrack_G$ in the basis $\ol{\bix}^\Lambda$ is
\[
x_i\ol{\bix}^\mu = \sum_{\lambda \in \Lambda} \alpha_{\lambda}^{i,\mu} \ol{\bix}^\lambda \;,
\]
then
\begin{equation} \label{verma_modules_explicitly:x}
x_i.(\ol{\bix}^\mu \otimes w_k) = \sum_{\lambda \in \Lambda} \alpha_{\lambda}^{i,\mu} \ol{\bix}^\lambda \otimes w_k
\end{equation}
is the basis representation of $x_i.(\ol{\bix}^\mu \otimes w_k) \in \Delta_c(\rho)$ in the basis $\ol{\bix}^\Lambda \otimes \biw$. So, we actually just need to understand the action of the $x_i$ on the coinvariant algebra $K \lbrack V \rbrack_G$ and this can computationally be solved using Gröbner bases. \\

Now, let us consider the action of $g_i$ on $\Delta_c(\rho)$. We have 
\[
g_i.(\ol{\bix}^\mu \otimes w_k) = (g_i \ol{\bix}^\mu) \otimes w_k = (\,^{g_i}\ol{\bix}^\mu g_i) \otimes w_k = \,^{g_i} \ol{\bix}^\mu \otimes \,^{g_i}w_k \;.
\]
Hence, if the basis representation of $\,^{g_i}\ol{\bix}^\mu \in K \lbrack V \rbrack_G$ in the basis $\ol{\bix}^\Lambda$ is
\[
\,^{g_i}\ol{\bix}^\mu = \sum_{\lambda \in \Lambda} \beta_{\lambda}^{i,\mu} \ol{\bix}^\lambda
\]
and the basis representation of $\,^{g_i}w_k$ in the basis $\biw$ is
\[
\,^{g_i}w_k = \sum_{t=1}^d \gamma_{t}^{i,k} w_t \;, 
\]
then the basis representation of $g_i.(\ol{\bix}^\mu \otimes w_k)$ in the basis $\ol{\bix}^\Lambda \otimes \biw$ is
\begin{equation} \label{verma_modules_explicitly:g}
    g_i.(\ol{\bix}^\mu \otimes w_k) = \left(\sum_{\lambda \in \Lambda} \beta_{\lambda}^{i,\mu} \ol{\bix}^\lambda \right) \otimes \left(\sum_{t=1}^d \gamma_{t}^{i,k} w_t \right) = \sum_{\lambda \in \Lambda} \sum_{t=1}^d \beta_{\lambda}^{i,k} \gamma_{t}^{i,k} \ol{\bix}^\lambda \otimes w_t \;.
\end{equation}
So, to understand the action of $g_i$ in $\Delta_c(\rho)$ we need to understand the action of $g_i$ on the coinvariant algebra $K \lbrack V \rbrack_G$ and on the $KG$-module $W$. The first can again be computationally achieved using Gröbner bases, the second is no problem when we have an explicit realization of $\rho$. \\

Now, we come to the hardest part, namely the action of $y_i$ on $\Delta_c(\rho)$. This is the point where the structure of the restricted rational Cherednik algebra enters the game. Namely, to write the element $y_i(\ol{\bix}^\mu \otimes w_k) = (y_i \ol{\bix}^\mu) \otimes w_k$ in the basis $\ol{\bix}^\Lambda \otimes \biw$, we first have to rewrite $y_i\ol{\bix}^\mu$ in the PBW-basis of $\ol{\rH}_c$. Recall from Lemma \ref{commutator_formula} that
\begin{equation}
 \lbrack y_i, \bix^\mu \rbrack = \sum_{t=1}^n \sum_{s \in \mrm{Ref}_\Gamma}\sum_{l=0}^{\mu_t-1} c(s) (y_i,x_t)_s x_1^{\mu_1} \cdots x_{i-1}^{\mu_{t-1}} x_t^l (^s x_t)^{\mu_t-l-1} (^s x_{t+1})^{\mu_{t+1}} \cdots (^s x_n)^{\mu_n} s \;.
\end{equation} 
 Using this formula we get
\begin{align*}
 & y_i(\ol{\bix}^\mu \otimes w_k) = (y_i \ol{\bix}^\mu) \otimes w_k = (\ol{\bix}^\mu y_i + \lbrack y_i, \ol{\bix}^\mu \rbrack) \otimes w_k = (\ol{\bix}^\mu y_i) \otimes w_k + \lbrack y_i, \ol{\bix}^\mu \rbrack \otimes w_k \allowdisplaybreaks[1]\ \\
 &=  \sum_{t=1}^n \sum_{s \in \mrm{Ref}_\Gamma}\sum_{l=0}^{\mu_t-1} c(s) (y_i,x_t)_s x_1^{\mu_1} \cdots x_{t-1}^{\mu_{t-1}} x_t^l (^s x_t)^{\mu_t-l-1} (^s x_{t+1})^{\mu_{t+1}} \cdots (^s x_n)^{\mu_n} s \otimes w_k \allowdisplaybreaks[1]\ \\
 &= \sum_{t=1}^n \sum_{s \in \mrm{Ref}_\Gamma}\sum_{l=0}^{\mu_t-1} c(s) (y_i,x_t)_s x_1^{\mu_1} \cdots x_{t-1}^{\mu_{t-1}} x_t^l (^s x_t)^{\mu_t-l-1} (^s x_{t+1})^{\mu_{t+1}} \cdots (^s x_n)^{\mu_n} \otimes \,^s w_k \allowdisplaybreaks[1]\ \\
 &= \sum_{s \in \mrm{Ref}_\Gamma} \sum_{t=1}^n \sum_{l=0}^{\mu_t-1} c(s) (y_i,x_t)_s x_1^{\mu_1} \cdots x_{t-1}^{\mu_{t-1}} x_t^l (^s x_t)^{\mu_t-l-1} (^s x_{t+1})^{\mu_{t+1}} \cdots (^s x_n)^{\mu_n} \otimes \,^s w_k \;, \numberthis \label{verma_modules_explicitly:y}
\end{align*}
where we used that $(\ol{\bix}^\mu y_i) \otimes w_k = 0$ by definition of $\Delta_c(\rho)$. 
This expression is not yet a basis expression in the basis $\ol{\bix}^\Lambda \otimes \biw$ but by rewriting the elements on the left hand side of the tensor products in the basis $\ol{\bix}^\Lambda$ as above using Gröbner bases and rewriting the elements on the right hand side in the basis $\biw$ immediately gives a basis expression.  Hence, with the formulas in (\ref{verma_modules_explicitly:x}), (\ref{verma_modules_explicitly:g}), and (\ref{verma_modules_explicitly:y}) we can explicitly compute the Verma module $\Delta_c(\rho)$ and represent it in this way in a computer. Note, however, that it still needs an explicit method---like Gröbner bases---to rewrite elements in the coinvariant algebra in terms of a chosen (monomial) basis.

\subsection{X-tables} \label{X_tables}
Some parts of formula (\ref{verma_modules_explicitly:y}) occur multiple times. In particular if one wants to consecutively compute Verma modules for different $KG$-representations one can split off these parts to increase efficiency. We propose the following approach. Fix $i \in \lbrack 1,n \rbrack$, $s \in \mrm{Ref}_\Gamma$, and $\mu \in \Lambda$. Let $X^{(i,s)}_\mu = (X^{(i,s)}_{\mu,\eta})_{\eta \in \Lambda}$ be such that  $X^{(i,s)}_{\mu,\eta}$ is the coefficient of $\ol{\bix}^\eta$ in the basis representation of  
    \[
    \sum_{t=1}^n \sum_{l=0}^{\mu_{t}-1} (y_i,x_t)_s x_1^{\mu_{1}} \cdots x_{t-1}^{\mu_{t-1}} x_t^l (^s x_t)^{\mu_{t}-l-1} (^s x_{t+1})^{\mu_{t+1}} \cdots (^s x_n)^{\mu_{n}} \in K \lbrack V \rbrack_G
    \]
    in the basis $\ol{\bix}^\Lambda$. We can consider $X^{(i,s)}_\mu$ as a row vector and by varying $\mu$ we get a matrix $X^{(i,s)} \in \Mat_{\Lambda \times \Lambda}(K)$ satisfying
    \begin{equation}
    y_i(\ol{\bix}^\mu \otimes w_k) = \sum_{s \in \mrm{Ref}_\Gamma} c(s) \sum_{\eta \in \Lambda} X^{(i,s)}_{\mu,\eta} \ol{\bix}^\eta \otimes \,^s w_k \;.
    \end{equation}
   Note that the matrix $X^{(i,s)}$ is independent of the representation $\rho$ and even of $c$ so that it can be used again for further computations.
For the computation of $X^{(i,s)}$ we can define for fixed $\mu \in \Lambda$ the following two expressions, indexed by $t \in \lbrack 1,n \rbrack$: 
   \begin{equation}
   p_\mu^{\tn{start}}(t) \dopgleich x_1^{\mu_{1}} \cdots x_{t-1}^{\mu_{t-1}}  \;,
  \end{equation}
  \begin{equation}
      p_\mu^{\tn{end}}(t) \dopgleich x_{t+1}^{\mu_{t+1}} \cdots x_n^{\mu_{n}} \;.
  \end{equation}
   Then for $s \in \mrm{Ref}_\Gamma$ the row vector $X_\mu^{(i,s)}$ can be determined by computing the basis representation of the element
   \begin{equation}
    \sum_{s \in \mrm{Ref}_\Gamma} \sum_{t =1}^n(y_i,x_t)_s p_\mu^{\mrm{start}}(t) \left(\sum_{l=0}^{\mu_{t}-1} x_t^l (^s x_t)^{\mu_{t}-l-1} \right) \,^s p_\mu^{\mrm{end}}(t) \;.
   \end{equation}

The above methods for computing $\Delta_c(\rho)$ are implemented in exactly this way in \champ. To use the grading of Verma modules we implemented a new type \code{ModGr} allowing to handle graded modules in general. Moreover, we observed that Verma modules are usual very sparse and so we use sparse matrices in our implementation. Even Verma modules of dimension a few thousand can in this way be computed quite fast and with low memory usage.

\subsection{Decomposing Verma modules—the abstract idea} \label{abstract_idea}

After this initial problem being solved, we turn to the actual questions in \S\ref{gordon_problems}, namely: how can we compute the simple modules $\rL_c(\lambda)$, i.e., the heads of the Verma modules, and how can we compute the constituents of the Verma modules? Over finite fields, this can be achieved using the \textsc{MeatAxe} algorithm (see \cite{Par-Meataxe}, \cite{HR-Meataxe}, \cite{Hol-Meataxe},  \cite[\S7.4]{Holt.D;Eick.B;OBrien.E05Handbook-of-computat}, \cite[\S1.3]{LP-representations}, \cite[7.1.1]{Geck.M;Jacon.N11Representations-of-H}), which is also implemented in \textsc{Magma}. In the generic situations (where the base ring is a rational function field) and in case of base rings of characteristic zero, however, there does not exist any practical algorithm capable of solving our problems. Although there are some recent approaches to a ``characteristic zero \textsc{MeatAxe}''—so for example the general method developed by Steel \cite{Steel.A12Construction-of-Ordi}, which is also implemented in \textsc{Magma}—no existing algorithm was successful even in smaller examples (see the experiments in \S\ref{magmas_algo_timing} proving this). We therefore conceived a method aiming to solve this problem. Although our whole idea is based on necessary conditions so that the resulting algorithm might not produce a result at all, it turned out to be extremely successful and efficient for Verma modules of restricted rational Cherednik algebras and was the key tool of our progress on Gordon's questions for exceptional complex reflection groups (see \S\ref{further_results}). \\

Our approach is very general—so, it has nothing to do with Cherednik algebras—and relies on the fact that we can solve the problems over finite fields using the \textsc{MeatAxe}. As we do not have a finite field at hand, we first need a way to transfer the situation to a finite field and then we have to figure out what the situation over the finite field tells us about our original situation. The following proposition---formulated abstractly---is the main ingredient for our approach.

\begin{definition}
If $\sA$ and $\sB$ are two essentially small abelian categories, then a group morphism $d:\rK_0(\sA) \rarr \rK_0(\sB)$ of the zeroth $\rK$-groups is called \word{positive} if $d(\rK_0^+(\sA)) \subs \rK_0^+(\sB)$, where $\rK_0^+$ is the submonoid represented by objects, and it is called \word{strongly positive} if it is positive and $d(\lbrack X \rbrack) = 0$ implies $\lbrack X \rbrack = 0$ for all $\lbrack X \rbrack \in \rK_0^+(\sA)$.
\end{definition}

\begin{proposition} \label{strongly_positive_map_props}
 Let $\sA$ and $\sB$ be two abelian categories of finite length and let $d:\rK_0(\sA) \rarr \rK_0(\sB)$ be a strongly positive morphism. Let $X \in \sA$. The following holds:
\begin{enum_thm}
 \item If $d(\lbrack X \rbrack)$ is simple, then $X$ itself is simple.
 \item \label{strongly_positive_map_props_1} Let $(S_i)_{i \in I}$ be a set of representatives of the simple objects of $\sA$ and let $(T_j)_{j \in J}$ be a set of representatives of the simple $\sB$-objects. Let $X \in \sA$ and let $J_X^d \dopgleich \lbrace j \in J \mid \lbrack d(\lbrack X \rbrack) : T_j \rbrack \neq 0 \rbrace$. Suppose that there exists a subset $I_X^d \subs I$ such that $\lbrack X:S_i \rbrack = 0$ for all $i \in I \setminus I_X^d$ and such that there exists a bijection $\lambda_X^d:J_X^d \rarr I_X^d$ with $d(\lbrack S_{\lambda_X^d(j)} \rbrack) = T_j$ for all $j \in J_X^d$. Then
\[
\lbrack X \rbrack = \sum_{j \in J_X^d} \lbrack d(\lbrack X \rbrack): T_j \rbrack \lbrack S_{\lambda_X^d(j)} \rbrack
\]
and in this case we say that $d$ is \word{$X$-generic}.
\end{enum_thm}
\end{proposition}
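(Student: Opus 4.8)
The plan rests on the standard fact that the Grothendieck group of an abelian category of finite length is free abelian on the classes of the pairwise non-isomorphic simple objects, the coefficient of a simple class $\lbrack S \rbrack$ in $\lbrack Y \rbrack$ being the composition multiplicity $\lbrack Y:S \rbrack$. In particular there is a ``total length'' homomorphism $\rK_0 \rarr \bbZ$ sending every simple class to $1$; it is non-negative on the submonoid $\rK_0^+$ represented by objects, vanishes there only at $0$, and returns the ordinary length. I apply this to both $\sA$ and $\sB$ and write $\ell_\sA$, $\ell_\sB$ for the respective homomorphisms.

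For part (i) I would write $\lbrack X \rbrack = \sum_{i \in I} \lbrack X:S_i \rbrack \lbrack S_i \rbrack$ and apply $d$ to get $d(\lbrack X \rbrack) = \sum_i \lbrack X:S_i \rbrack\, d(\lbrack S_i \rbrack)$. Since $\lbrack S_i \rbrack \neq 0$, strong positivity gives $d(\lbrack S_i \rbrack) \neq 0$, while positivity gives $d(\lbrack S_i \rbrack) \in \rK_0^+(\sB)$, so $\ell_\sB(d(\lbrack S_i \rbrack)) \geq 1$. Applying $\ell_\sB$ then yields $\ell_\sB(d(\lbrack X \rbrack)) = \sum_i \lbrack X:S_i \rbrack\, \ell_\sB(d(\lbrack S_i \rbrack)) \geq \sum_i \lbrack X:S_i \rbrack = \ell_\sA(\lbrack X \rbrack)$. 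If $d(\lbrack X \rbrack)$ is simple then the left-hand side equals $1$ and $d(\lbrack X \rbrack) \neq 0$; the latter forces $X \neq 0$ and hence $\ell_\sA(\lbrack X \rbrack) \geq 1$, so the chain of inequalities collapses to $\ell_\sA(\lbrack X \rbrack) = 1$, i.e. $X$ is simple.

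For part (ii) the hypothesis $\lbrack X:S_i \rbrack = 0$ for $i \notin I_X^d$ gives $\lbrack X \rbrack = \sum_{i \in I_X^d} \lbrack X:S_i \rbrack \lbrack S_i \rbrack$. Applying $d$, then substituting $i = \lambda_X^d(j)$ along the bijection $\lambda_X^d : J_X^d \rarr I_X^d$ and using $d(\lbrack S_{\lambda_X^d(j)} \rbrack) = T_j$, I obtain $d(\lbrack X \rbrack) = \sum_{j \in J_X^d} \lbrack X:S_{\lambda_X^d(j)} \rbrack \lbrack T_j \rbrack$. Since $\lambda_X^d$ is a bijection the $T_j$ occurring here are pairwise non-isomorphic, so comparing coefficients against the basis $(\lbrack T_j \rbrack)_{j \in J}$ of $\rK_0(\sB)$ shows $\lbrack d(\lbrack X \rbrack):T_j \rbrack = \lbrack X:S_{\lambda_X^d(j)} \rbrack$ for $j \in J_X^d$ (and both sides vanish for $j \notin J_X^d$ by definition of $J_X^d$). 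Substituting this back into the expression for $\lbrack X \rbrack$ yields the asserted formula $\lbrack X \rbrack = \sum_{j \in J_X^d} \lbrack d(\lbrack X \rbrack):T_j \rbrack \lbrack S_{\lambda_X^d(j)} \rbrack$.

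I do not anticipate a serious obstacle: the argument is essentially bookkeeping in free abelian groups. The two points requiring care are (a) in part (i), that $\ell_\sB(d(\lbrack S_i \rbrack)) \geq 1$ genuinely uses \emph{strong} positivity and not merely positivity (positivity alone would permit $d(\lbrack S_i \rbrack) = 0$, destroying the length estimate); and (b) in part (ii), that the re-indexing along $\lambda_X^d$ discards nothing, which is exactly where the hypotheses ``$\lbrack X:S_i \rbrack = 0$ for $i \notin I_X^d$'' and ``$\lambda_X^d$ is onto $I_X^d$'' enter, guaranteeing that every simple actually occurring in $X$ appears as some $S_{\lambda_X^d(j)}$.
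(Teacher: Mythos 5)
Your proposal is correct and takes essentially the same route as the paper: part (ii) is identical (expand $\lbrack X \rbrack$ over $I_X^d$, push through $d$, reindex along the bijection $\lambda_X^d$, and compare coefficients against the basis $(\lbrack T_j \rbrack)_{j \in J}$ of $\rK_0(\sB)$). For part (i) the paper argues by contradiction — a simple class in $\rK_0^+(\sB)$ cannot equal a sum of two nonzero positive classes, which strong positivity would force — and your length-homomorphism estimate is just a repackaging of that same fact, so the difference is cosmetic.
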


\begin{proof} \hfill

\begin{enum_proof}
 \item Suppose that $X$ is not simple. Then we can write $\lbrack X \rbrack = \lbrack X_1 \rbrack + \lbrack X_2 \rbrack$ with $\lbrack X_1 \rbrack, \lbrack X_2 \rbrack \neq 0$ and we get the relation $\lbrack T \rbrack = d(\lbrack X \rbrack) = d(\lbrack X_1 \rbrack) + d(\lbrack X_2 \rbrack)$ in $\rK_0^+(\sB)$ with $T \in \sB$ simple. Since $d$ is strongly positive, we have $d(\lbrack X_1 \rbrack),d(\lbrack X_2 \rbrack) \neq 0$. But then the above relation in $\rK_0^+(\sB)$ is impossible. Hence, $X$ must be simple.

\item The basis representation of $\lbrack X \rbrack$ is
\[
\lbrack X \rbrack = \sum_{i \in I} \lbrack X:S_i \rbrack \lbrack S_i \rbrack = \sum_{i \in I_X^d} \lbrack X:S_i \rbrack \lbrack S_i \rbrack.
\]
Using the fact that $\lambda_X^d$ is a bijection, we get
\begin{align*}
d(\lbrack X \rbrack)  &= \sum_{i \in I_X^d} \lbrack X:S_i \rbrack d(\lbrack S_i \rbrack) = \sum_{j \in J_X^d} \lbrack X:S_{\lambda_X^d(j)} \rbrack d(\lbrack S_{\lambda_X^d(j)} \rbrack) = \sum_{j \in J_X^d} \lbrack X:S_{\lambda_X^d(j)} \rbrack \lbrack T_j \rbrack.
\end{align*}
Since the basis representation of $d(\lbrack X \rbrack)$ is
\[
d(\lbrack X \rbrack) = \sum_{j \in J} \lbrack d(\lbrack X \rbrack):T_j \rbrack \lbrack T_j \rbrack= \sum_{j \in J_X^d} \lbrack d(\lbrack X \rbrack):T_j \rbrack \lbrack T_j \rbrack,
\]
the claim is proven. \qedhere

\end{enum_proof}
\end{proof}

\begin{para}
For a finite-dimensional algebra $A$ over a field we denote by $\rG_0(A) \dopgleich \rK_0(A\tn{-}\msf{mod})$ the Grothendieck group of $A$, where $A\tn{-}\msf{mod}$ is the category of finite-dimensional left $A$-modules. Applying Proposition \ref{strongly_positive_map_props} to the Grothendieck groups of finite-dimensional algebras $A$ and $B$ over fields shows us that if we have a strongly positive morphism $d:\rG_0(A) \rarr \rG_0(B)$ and we can compute decompositions in $\rG_0(B)$---for example if the base field of $B$ is finite using the \textsc{MeatAxe}!---, then we can computationally prove that an $A$-module is simple and we have a chance of computing decompositions of $A$-modules in $\rG_0(A)$. The morphism $d$ is really the link between a computationally manageable ring $B$ and the ring $A$. Our proposition leads us to the following two strategies.
\end{para}

\begin{strategy} \label{computing_heads_general}
For computing the head of a finite-dimensional module $V$ with simple head over a finite-dimensional algebra $A$ over a field we propose the following method:
\begin{enum_thm}
    \item Find a strongly positive morphism $d:\rG_0(A) \rarr \rG_0(B)$ with $B$ a finite-dimensional algebra over a \textit{finite} field.
    \item \label{computing_heads_general:radical} Create a submodule $J$ of $V$, which is to be considered as a candidate for the radical, compute the quotient $V/J$ and check using the \textsc{MeatAxe} if $d(V/J)$ is irreducible. If it is, then we know that $V/J$ is simple and is therefore the head of $V$.
\end{enum_thm}   
\end{strategy}

\begin{strategy} \label{computing_const_abstract}
 Let $A$ be a finite-dimensional algebra over a field. Suppose that we have a family $(V_\lambda)_{\lambda \in \Lambda}$ of finite-dimensional $A$-modules with simple heads $(S_\lambda)_{\lambda \in \Lambda}$. Suppose furthermore that this family is \word{constituent-closed}, meaning that every constituent of a member $V_\lambda$ of this family is the head $S_\mu$ of some $V_\mu$. We then have
 \[
 \lbrack V_\lambda \rbrack = \sum_{\mu \in \Lambda} m_{\lambda,\mu} \lbrack S_\mu \rbrack \in \rG_0(A)
 \]
 for some $m_{\lambda,\mu} \in \bbN$ and we propose the following method for computing these decomposition numbers:
\begin{enum_thm}
    \item Find a strongly positive morphism $d:\rG_0(A) \rarr \rG_0(B)$ with $B$ a finite-dimensional algebra over a \textit{finite} field such that $d(S_\lambda)$ is simple for all $\lambda \in \Lambda$.
    \item For each $\lambda \in \Lambda$ compute using the \textsc{MeatAxe} the constituents $(T_{\lambda,\theta})_{\theta \in \Theta_\lambda}$ and their multiplicities $m_{\lambda,\theta}$. Now, check if there exists an injection $\iota_\lambda:\Theta_\lambda \hookrightarrow \Lambda$ such $d(S_\mu) \cong T_{\lambda,\theta}$ for some $\mu \in \Lambda$ and $\theta \in \Theta_\lambda$ if and only if $\mu = \iota_\lambda(\theta)$. In this case
    \[
    \lbrack V_\lambda \rbrack = \sum_{\mu \in \Lambda} m_{\lambda,\mu} \lbrack S_\mu \rbrack \in \rG_0(A) \;, 
    \]
    where $m_{\lambda,\iota_\lambda(\theta)} \dopgleich m_{\lambda,\theta}$ for $\theta \in \Theta_\lambda$ and $m_{\lambda,\mu} \dopgleich 0$ for all $\mu \notin \Im \iota_\lambda$.
\end{enum_thm}
\end{strategy}

\begin{para}
While decomposition morphisms---more precisely, compositions of decomposition morphisms which do not necessarily have to be decomposition morphisms themselves, whence the formulation using strongly positive morphisms---will certainly play a central role for finding appropriate strongly positive morphisms to algebras over finite fields, it is completely unclear at this stage what we should do in Strategy \ref{computing_heads_general}\ref{computing_heads_general:radical} to produce a candidate for the radical of a module with simple head. In the following two paragraphs we will discuss methods to solve these two problems. Our final algorithm is presented in \S\ref{heads}.
\end{para}

\section{Finite field specializations of restricted rational Cherednik algebras} \label{ffspecs}

In this paragraph we discuss a quite general method to produce for an algebra $A$ (satisfying some assumptions) a strongly positive morphism $d:\rG_0(A) \rarr \rG_0(B)$ into the Grothendieck group of a finite-dimensional algebra over a finite field—this is the first step in the strategies outlined in \S\ref{abstract_idea}. In \S\ref{rrca_integral_structures} we discuss when this works for restricted rational Cherednik algebras, and this leads us to the notion of \textit{integral structures} of these algebras.

\subsection{Finite field specializations in general}

\begin{parani}
Let us fix a Dedekind domain $\sO$ with quotient field $K$, a normal commutative $K$-algebra $R$, and an $R$-algebra $A$ which is free and finitely generated as an $R$-module. 
\end{parani}

\begin{definition} \label{finite_field_spec_def}
 A \word{finite field specialization} of $A$ is a pair $(\fm,u)$ such that:
\begin{enum_thm}
    \item $\fm$ is a maximal ideal of $\sO$ with finite residue field.
    \item $u$ is a $K$-point of the $K$-scheme $\Spec(R)$ such that the $K$-algebra $A(u) \dopgleich u^*\!A$ splits and has an $\sO_\fm$-free $\sO_\fm$-structure $\wt{A}(u)$, i.e., the scalar extension $\wt{A}(u)^K$ of $\wt{A}(u)$ to $K$ is isomorphic to $A(u)$.
 \end{enum_thm}  
\end{definition}

Since $A(u)$ splits, the theory of decomposition morphisms by Geck–Rouquier \cite{GR-Centers-Simple-Hecke} and Geck–Pfeiffer \cite[\S7]{GP-Coxeter-Hecke} implies that the decomposition morphism
\[
\rd_{A}^u:\rG_0(A(\biu)) \rarr \rG_0(A(u)) 
\] 
exists, where $\biu$ is the generic point of $\Spec(R)$, i.e., $A(\bu) = A^{\rQ(R)}$, where $\rQ(R)$ is the quotient field of $R$. Now, by assumption $A(u)$ has an $\sO_\fm$-free $\sO_\fm$-structure $\wt{A}(u)$. Since $\sO_\fm$ is a valuation ring, the decomposition morphism
\[
\rd_{\wt{A}(u)}^{\fm_\fm}: \rG_0(A(u)) \rarr \rG_0(\wt{A}(u)(\fm_\fm)) 
\] 
exists, where $\wt{A}(u)(\fm_\fm)$ is the scalar extension of $\wt{A}(u)$ to the residue field of $\fm_\fm$.  As decomposition morphisms are strongly positive, we obtain a strongly positive morphism
\begin{equation} \label{d_of_finite_field_spec}
\begin{tikzcd}
\rG_0(A(\biu)) \arrow{r}{\rd_{A}^u} \arrow[swap,bend right]{rr}{\rd_{A}^{\fm,u}} & \rG_0( A(u) ) \arrow{r}{\rd_{\wt{A}(u)}^\fm} & \rG_0( \wt{A}(u)(\fm_\fm))
\end{tikzcd}
\end{equation} 
We have omitted the choice of the $\sO_\fm$-free $\sO_\fm$-structure of $A(u)$ in the notation $\rd_A^{\fm,u}$ as this will not be important---although the knowledge about the existence of such a structure is of course crucial. We call $\rd_A^{\fm,u}$ the \word{decomposition morphism} of $A$ in $(\fm,u)$ but note that this does not have to be a decomposition morphism itself.

\begin{remark}
The notion of finite field specializations can of course be generalized to arbitrary finite chains of decomposition morphisms ending in the Grothendieck group of an algebra over a finite field. One only has to make sure in each step that the decomposition morphism exists with the main problem being the existence of integral structures.
\end{remark}

\begin{remark} \label{high_chance}
In \cite{Thi-Decomposition-morphi2014} it is proven that decomposition morphisms are generically trivial for finite free algebras with split generic fiber over noetherian normal rings. Hence, assuming that $R$ is noetherian and that $A$ has split fibers, the morphism $\rd_A^{\fm,u}$ is trivial for generic $u$ and for generic $\fm$, meaning that it induces a bijection between simple modules. Hence,  finite field specializations can be used to employ Proposition  \ref{strongly_positive_map_props}\ref{strongly_positive_map_props_1} generically. This already indicates that it makes sense to choose finite field specializations randomly as the probability is quite high to stay in the generic region.
\end{remark}

\begin{remark} \label{mod_finite_field_spec}
If $(\fm,u)$ is a finite field specialization of $A$ and $V$ is a finite-dimensional $A(\biu)$-module, it will be important to explicitly compute a representative of $\rd_A^{\fm,u}(\lbrack V \rbrack)$. To this end, suppose that the image of $u$ is contained in $\sO_\fm$ and that we have an $R_\fp$-free $A_\fp$-structure $\wt{V}$ of $V$ for some $\fp \in \Spec(R)$. Let $\wt{\sV}$ be an $R_\fp$-basis of $\wt{V}$ and let $\sA$ be an $R$-algebra generating system of $A$. If we apply the map $u$ to the entries of the matrices describing the action of $a \in \sA$ on $V$ in terms of the basis $\sV$, we obtain a representative of $\rd_A^u(\lbrack V \rbrack)$. As the image of $u$ is contained in $\sO_\fm$ by assumption, the entries of the matrices just obtained are contained in $\sO_\fm$ and so we can reduce them modulo $\fm_\fm$, and this a representative of $\rd_{\wt{A}(u)}^\fm \circ \rd_A^u(\lbrack V \rbrack) = \rd_A^{\fm,u}(\lbrack V \rbrack)$. In this situation we do not even see the chosen $\sO$-free $\sO$-structure $\wt{A}(u)$ of $A(u)$.

Although formally a bit complicated, this whole process is actually quite straightforward in explicit situations and is automatically performed by the command \code{Specialize} in \champ. That a pair $(\fm,u)$ is indeed a finite field specialization has to be checked manually, however.
\end{remark}

\subsection{Integral structures of restricted rational Cherednik algebras} \label{rrca_integral_structures}

Let us now turn to the problem of finding finite field specializations of restricted rational Cherednik algebras.

\begin{leftbar}
\begin{assumption}
By $\Gamma \dopgleich (G,V)$ we denote a finite reflection group over a field $K$ containing a Dedekind domain $\sO$ with quotient field $K$. We assume as usual that all reflections are diagonalizable. Furthermore, we assume that the action of $G$ on $V$ and on $V^*$ has no non-zero fixed points, i.e., the $G$-modules $V$ and $V^*$ are \textit{essential}. This certainly holds if $\Gamma$ is irreducible. 
\end{assumption}
\end{leftbar}

\begin{definition}
We say that a Cherednik parameter $c \in \fR_\Gamma(K)$ is \word{$\sO$-integral} if the $K$-algebra $\ol{\rH}_c$ has an $\sO$-free $\sO$-structure. We call any such structure an \word{$\sO$-integral structure}.
\end{definition}

\begin{para}
It seems that the existence of integral structures of restricted rational Cherednik algebras has never been considered before. Before we give a sufficient condition for their existence, note that for any $s \in \mrm{Ref}_\Gamma$ the set  
   \[
    \mrm{Che}_\Gamma(s) \dopgleich \lbrace (y_j,x_i)_s \mid i,j \in \lbrack 1,n \rbrack \rbrace \subs K
    \]
   for a $K$-basis $(y_i)_{i=1}^n$ of $V$ with dual basis $(x_i)_{i=1}^n$ is independent of the chosen basis. 
\end{para}

\begin{definition}
 We say that $c \in \fR_\Gamma(K)$ is \word{potentially $\sO$-integral} if $c(s) \mrm{Che}_\Gamma(s) \subs \sO$ for all $s \in \mrm{Ref}_\Gamma$.
\end{definition}

\begin{theorem} \label{res_cher_structure_constants_lemma} 
If there exists a datum $(\biy,\sA,\sB,\sG)$ consisting of a basis $\biy$ of $V$ with dual basis $\bix$, a basis $\sA$ of $K \lbrack V \rbrack_G$, a basis $\sB$ of $K \lbrack V^* \rbrack_G$, and a generating system $\sG$ of $G$ satisfying all of the following properties, then any potentially $\sO$-integral parameter $c \in \fR_\Gamma(K)$ is already $\sO$-integral:
    \begin{enum_thm}
        
        \item \label{res_cher_structure_constants_lemma:img} $\sA$ contains the images of the elements of $\bix$ in $K \lbrack V \rbrack_G$ and every element of $\sA$ is an $\sO$-linear polynomial in these images. The basis $\sB$ satisfies the analogous conditions.
                
        \item \label{res_cher_structure_constants_lemma:str} The structure constants of $K \lbrack V \rbrack_G$ with respect to $\sA$ are contained in $\sO$. The structure constants of $K \lbrack V^* \rbrack_G$ with respect to $\sB$ satisfy the analogous conditions.
        
        \item \label{res_cher_structure_constants_lemma:G} For all $g \in \sG$ the action of $g$ on $V$ in the basis $\biy$ and the action of $g$ on $V^*$ in the basis $\bix$ is described by matrices with entries in $\sO \subs K$. 

    \end{enum_thm}
\end{theorem}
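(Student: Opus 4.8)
The plan is to construct an $\sO$-integral structure of $\ol{\rH}_c$ explicitly out of the datum $(\biy,\sA,\sB,\sG)$ and to verify closedness under multiplication via the commutator formula of Lemma~\ref{commutator_formula} together with potential $\sO$-integrality. Write $L_V \dopgleich \langle \sA \rangle_\sO \subs K\lbrack V \rbrack_G$ and $L_{V^*} \dopgleich \langle \sB \rangle_\sO \subs K\lbrack V^* \rbrack_G$ for the $\sO$-spans of the two bases. Conditions \ref{res_cher_structure_constants_lemma:img} and \ref{res_cher_structure_constants_lemma:str} say precisely that $L_V$ is an $\sO$-subalgebra of $K\lbrack V \rbrack_G$ containing the images $\ol{x}_i$ of the $x_i$ with $L_V^K = K\lbrack V \rbrack_G$, and likewise for $L_{V^*}$ and the $\ol{y}_i$; in particular each contains every $\sO$-polynomial expression in the respective images. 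Under the triangular decomposition (\ref{che_res_triang_dec}) the set $T \dopgleich \lbrace a\, g\, b \mid a \in \sA,\ g \in G,\ b \in \sB \rbrace$ is a $K$-basis of $\ol{\rH}_c$; let $\wt{\ol{\rH}}_c$ be the $\sO$-submodule of $\ol{\rH}_c$ it spans, so that as an $\sO$-module $\wt{\ol{\rH}}_c$ is the image of $L_V \otimes_\sO \sO G \otimes_\sO L_{V^*}$ under the isomorphism (\ref{che_res_triang_dec}). Then $\wt{\ol{\rH}}_c$ is $\sO$-free with basis $T$ (a $K$-linearly independent set is $\sO$-linearly independent) and its scalar extension to $K$ is all of $\ol{\rH}_c$, so the theorem reduces to showing that $\wt{\ol{\rH}}_c$ is a unital subring of $\ol{\rH}_c$. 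It contains $1$ since $1 \in L_V \cap L_{V^*}$, so it remains, by bilinearity, to show that the product $(a_1 g_1 b_1)(a_2 g_2 b_2)$ of two elements of $T$ again lies in $\wt{\ol{\rH}}_c$.

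I would reduce this to three observations. \emph{(1)} Every $g \in G$ acts on $V$ and on $V^*$ by a matrix with entries in $\sO$: by \ref{res_cher_structure_constants_lemma:G} this holds for $g \in \sG$, and since $G$ is finite every element of $G$ is a product of elements of $\sG$, so the general case follows as $\sO$ is a subring of $K$. Consequently $\,^{g}a \in L_V$ for $a \in L_V$, $\,^{g}b \in L_{V^*}$ for $b \in L_{V^*}$, and $\,^{s}x_k$ is an $\sO$-linear combination of the $x_\ell$ for every $s \in \mrm{Ref}_\Gamma$. \emph{(2)} Products taken inside $L_V$ stay in $L_V$, and products inside $L_{V^*}$ stay in $L_{V^*}$, by \ref{res_cher_structure_constants_lemma:str}. \emph{(3)} By Lemma~\ref{commutator_formula} with $t = 0$, the commutator in $\rH_{0,c}$ of a $y_i$ with a monomial $\bix^\mu$ has the form $\lbrack y_i, \bix^\mu \rbrack = \sum_{s \in \mrm{Ref}_\Gamma} c(s)\bigl( \sum_{j=1}^n (y_i,x_j)_s\, p_{j,s}(\bix) \bigr) s$, where each $p_{j,s}$ is a polynomial with integer coefficients in the $x_k$ and their transforms $\,^{s}x_k$. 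Reducing modulo the Hilbert ideal and using \emph{(1)}, every $p_{j,s}$ becomes an $\sO$-polynomial in the $\ol{x}_k$ and hence lies in $L_V$; moreover $c(s)(y_i,x_j)_s \in c(s)\,\mrm{Che}_\Gamma(s) \subs \sO$ exactly because $c$ is potentially $\sO$-integral. Thus $\lbrack y_i, \bix^\mu \rbrack \in \sum_{g \in G} L_V\, g$ inside $\ol{\rH}_c$.

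With these in hand the product is rewritten by the usual reordering procedure of the triangular decomposition. Expanding $b_1$ as an $\sO$-polynomial in the $\ol{y}_i$ and $a_2$ as an $\sO$-polynomial in the $\ol{x}_j$ (using \ref{res_cher_structure_constants_lemma:img}) and moving the $y$'s to the right past the $x$'s one at a time via \emph{(3)} — the group elements produced by the commutators being carried past the remaining factors using \emph{(1)} and then collected using \emph{(1)} and \emph{(2)} — one obtains $b_1 a_2 = \sum_\nu r_\nu\, a'_\nu\, h_\nu\, b'_\nu$ with $r_\nu \in \sO$, $a'_\nu \in L_V$, $h_\nu \in G$, $b'_\nu \in L_{V^*}$. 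Then, using $g_1 a'_\nu = \,^{g_1}\!a'_\nu\, g_1$ and $b'_\nu g_2 = g_2\, \,^{g_2^{-1}}\!b'_\nu$ (legitimate since $\,^{g_1}$ and $\,^{g_2^{-1}}$ act by $\sO$-matrices by \emph{(1)}), one gets $(a_1 g_1 b_1)(a_2 g_2 b_2) = \sum_\nu r_\nu\, (a_1 \cdot \,^{g_1}\!a'_\nu)\, (g_1 h_\nu g_2)\, (\,^{g_2^{-1}}\!b'_\nu \cdot b_2)$, in which each left factor lies in $L_V$ and each right factor in $L_{V^*}$ by \emph{(1)}--\emph{(2)} while the middle is in $G$; hence the product lies in $\wt{\ol{\rH}}_c$, as required. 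The only genuinely delicate point is observation \emph{(3)} — ensuring that reducing the commutator formula modulo the Hilbert ideal and re-expanding in the chosen bases preserves $\sO$-integrality — and this is exactly what hypotheses \ref{res_cher_structure_constants_lemma:str}, \ref{res_cher_structure_constants_lemma:G} and the assumption $c(s)\,\mrm{Che}_\Gamma(s) \subs \sO$ are designed to guarantee; everything else is bookkeeping.
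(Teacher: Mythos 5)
Your proposal is correct and follows essentially the same route as the paper: one spans an $\sO$-lattice by the PBW basis built from $\sA$, $G$, and $\sB$ and checks that all structure constants lie in $\sO$, with the commutation of $y$'s past $x$'s handled by the commutator relation and the hypothesis $c(s)\,\mrm{Che}_\Gamma(s)\subs\sO$. Your write-up is merely a bit more explicit than the paper's on two points of bookkeeping — extending the $\sO$-integrality of the $G$-action from the generators $\sG$ to all of $G$, and organizing the recursive reordering — which the paper compresses into ``by a recursive application of this''.
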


\begin{proof}
Let $\bix = (x_i)_{i=1}^n$ and $\biy = (y_i)_{i=1}^n$. Let $\ol{x}_i$ and $\ol{y}_i$ denote the images of $x_i$ and $y_i$ in $K \lbrack V \rbrack_G$ and $K \lbrack V^* \rbrack_G$, respectively. 
A $K$-basis of $\ol{\rH}_c$ is given by $(abg)_{a \in \sA, b \in \sB, g \in G}$ and  it suffices to show that the structure constants of $\ol{\rH}_c$ with respect to this basis are contained in $\sO$. Due to \ref{res_cher_structure_constants_lemma:str}, products of the form $aa'$ and $bb'$ with $a,a' \in \sA$ and $b,b' \in \sB$ are $\sO$-linear combinations of elements of $\sA$ and $\sB$, respectively. Let $g \in \sG$. Then by \ref{res_cher_structure_constants_lemma:G} we have $\,^gx_i = \sum_{j=1}^n \alpha_{ij} x_j$ with $\alpha_{ij} \in \sO$. Since the $\ol{x}_i$ are contained in $\sA$ by \ref{res_cher_structure_constants_lemma:img}, it follows that $\,^g\ol{x}_i = \sum_{j=1}^n \alpha_{ij} \ol{x}_j$ is the basis expansion of $\,^g\ol{x}_i$ in the basis $\sA$. Hence, the structure constants of the action of $G$ on the elements of $\ol{\bix} \dopgleich (\ol{x}_i)_{i=1}^n$ are contained in $\sO$. If $\lambda \in \bbN^n$, then
\[
\,^g \ol{\bix}^\lambda  = \,^g\left( \prod_{i=1}^n \ol{x}_i^{\lambda_i} \right) = \prod_{i=1}^n \,^g \ol{x}_i^{\lambda_i}
\] 
By what we have just said, the elements $\,^g \ol{x}_i$ are $\sO$-linear combinations of the elements of $\bix$. It now follows from \ref{res_cher_structure_constants_lemma:str} that $\,^g \ol{\bix}^\lambda$ is an $\sO$-linear combination of the elements of $\sA$. This extends to the action of $G$ on all elements of $K \lbrack V \rbrack_G$ and therefore the structure constants of the multiplication of elements of $K \lbrack V \rbrack_G \subs \ol{\rH}_c$ with group elements are also contained in $\sO$. Analogously, this holds for the action of $G$ on $K \lbrack V^* \rbrack_G$. The only products of basis elements not already covered are those of the form $ba$ for $b \in \sB$ and $a \in \sA$. We have 
\[
\ol{y}_j \ol{x}_i = \ol{x}_i \ol{y}_j + \sum_{s \in \mrm{Ref}_\Gamma} (y_j,x_i) c(s)s 
\]
and this is an $\sO$-linear combination of basis elements.  By a recursive application of this and the fact that all other basis elements of $\sA$ and $\sB$ are polynomials in the $\ol{x}_i$ and the $\ol{y}_i$, respectively, we see that all the products $ba$ are $\sO$-linear combination of basis elements. This shows that $\ol{\rH}_c$ has an $\sO$-free $\sO$-structure.
\end{proof}

\begin{proposition} \label{rrca_integral_structure_exists}
For any basis $\biy$ of $V$ there is a basis $\sA$ of $K \lbrack V \rbrack_G$ and a basis $\sB$ of $K \lbrack V^* \rbrack_G$ satisfying Theorem \ref{res_cher_structure_constants_lemma}\ref{res_cher_structure_constants_lemma:img}.
\end{proposition}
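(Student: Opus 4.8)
The plan is to take $\sA$ to be a monomial basis of the coinvariant algebra $K[V]_G$ in the images $\ol{x}_i$ of the $x_i$ which happens to contain the degree-one monomials $\ol{x}_1,\dots,\ol{x}_n$ themselves. Since every monomial in the $\ol{x}_i$ is a polynomial in them with all coefficients in $\{0,1\}\subs\sO$, such an $\sA$ satisfies Theorem~\ref{res_cher_structure_constants_lemma}\ref{res_cher_structure_constants_lemma:img} for free; the basis $\sB$ of $K[V^*]_G$ is obtained by the same recipe applied to $V^*$ and the basis $\biy$ of $V^{**}=V$, using essentiality of $V$ just as we use essentiality of $V^*$ below.

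Fix a basis $\biy$ of $V$ with dual basis $\bix=(x_i)_{i=1}^n$. First I would note that the monomials $\bix^\alpha$, $\alpha\in\bbN^n$, generate $K[V]$ as a $K$-algebra, so their images $\ol{\bix}^\alpha$ span $K[V]_G$ over $K$. The one thing that genuinely needs to be checked is that $\ol{x}_1,\dots,\ol{x}_n$ remain $K$-linearly independent in $K[V]_G$, equivalently that the Hilbert ideal $\fh_\Gamma$ contains no non-zero linear form. This is exactly where the standing hypothesis that $V^*$ is essential is used: $K[V]^G_+$ is spanned by homogeneous invariants of positive degree, an invariant of degree one would be a non-zero $G$-fixed vector of $V^*$, and none exists; hence $K[V]^G_+\subs\bigoplus_{d\ge 2}K[V]^G_d$, so $\fh_\Gamma$ is homogeneous and generated by homogeneous elements of degree $\ge 2$, and therefore every non-zero homogeneous element of $\fh_\Gamma$ has degree $\ge 2$. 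In particular $\fh_\Gamma$ meets $K[V]_{\le 1}$ only in $0$, so $1\ne 0$ in $K[V]_G$ and $\ol{x}_1,\dots,\ol{x}_n$ are linearly independent there.

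Given this, producing $\sA$ is routine. One option fitting the framework of \S\ref{monomial_basis_coinv} is to fix a Gröbner basis of the homogeneous ideal $\fh_\Gamma$ consisting of homogeneous elements (with respect to any monomial order); by the previous paragraph their leading terms are monomials of degree $\ge 2$, hence divide neither $1$ nor any $x_i$, so the standard monomial basis $C(\fh_\Gamma)$ of Example~\ref{example_groebner_basis_normal_form} already contains $x_1,\dots,x_n$, and the images in $K[V]_G$ of its elements form the desired basis $\sA$. Equivalently, by pure linear algebra: starting from the linearly independent family $\ol{x}_1,\dots,\ol{x}_n$, greedily adjoin images $\ol{\bix}^\alpha$ of further monomials, one at a time, whenever the new one is not in the span of those already chosen; as $K[V]_G$ is finite-dimensional and spanned by the $\ol{\bix}^\alpha$, the process terminates at a $K$-basis $\sA$ of $K[V]_G$ consisting of monomials in the $\ol{x}_i$ and containing all of them. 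Repeating the argument with $(V,\bix)$ replaced by $(V^*,\biy)$ yields $\sB$.

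I do not expect any real obstacle: the entire content is the observation that the degree-one component of $K[V]_G$ is all of $V^*$, which is immediate from essentiality of $V^*$ (and dually for $K[V^*]_G$), and everything after that is the standard fact that a spanning family of monomials can be pruned to a basis extending a prescribed linearly independent subfamily of monomials. The mild point to get right is simply not to drop the essentiality hypothesis — without it $\fh_\Gamma$ could contain a linear form and then no basis of $K[V]_G$ could contain all the $\ol{x}_i$.
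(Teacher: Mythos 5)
Your proof is correct and follows essentially the same route as the paper: the whole content is the observation that essentiality of $V^*$ (resp.\ $V$) forces the Hilbert ideal to contain no nonzero linear form, after which a monomial basis coming from a (homogeneous) Gr\"obner basis automatically contains the images of the $x_i$. The paper phrases this by extending a system of fundamental invariants (each of degree $\geq 2$) to a Gr\"obner basis and noting no leading term can divide an $x_i$; your version, including the alternative greedy extension of $\{\ol{x}_1,\dots,\ol{x}_n\}$ to a monomial basis, is only a cosmetic variant.
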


\begin{proof}
Let $\bix = (x_i)_{i=1}^n$. We can then write $K \lbrack V \rbrack = K \lbrack x_1,\ldots,x_n \rbrack$. %
Let $\bif$ be a system of fundamental invariants of $\Gamma$. Note that the degrees of the elements of $\bif$ are strictly greater than $1$, since if $f \in \bif$ would be of degree equal to $1$, then it would be an element of $V^*$ fixed by $G$ and thus equal to zero as $\Gamma^*$ is essential by assumption. Since the Hilbert ideal $\fh_\Gamma$ of $\Gamma$ is the homogeneous ideal generated by $\bif$, it follows that $\fh_\Gamma$ does not contain linear polynomials. Now, extend $\bif$ to a Gröbner basis $\wt{\bif}$ of the Hilbert ideal $\fh_\Gamma$ of $\Gamma$ with respect to the lexicographical order. A monomial basis $\sA$ of $K \lbrack V \rbrack_G$ is then given by the images of the elements    
\[
\lbrace \bix^\alpha \mid \alpha \in \bbN^n \tn{ and } \bix^\alpha \tn{ is not divisible by some } \mrm{LT}(f) \tn{ for } f \in \sF \rbrace 
\]
in $K \lbrack V \rbrack_G$ (see Example \ref{example_groebner_basis_normal_form}). Now, suppose that the image of $x_i$ in $K \lbrack V \rbrack_G$ would not be contained in $\sA$. Then by definition there exists $f \in \wt{\bif}$ such that $\mrm{LT}(f)$ divides $x_i$. But this means that $f$ is a linear polynomial and we just argued that no linear polynomial is contained in the Hilbert ideal, so this is not possible. We can apply the same arguments to $K \lbrack V^* \rbrack_G$ and this proves the claim.
\end{proof}

\begin{proposition} \label{rrca_integral_structure_dedekind}
For all but finitely many maximal ideals $\fm$ of $\sO$ any potentially $\sO_\fm$-integral parameter $c \in \fR_\Gamma(K)$ is $\sO_\fm$-integral. We call those $\fm$ for which this is true \word{good} for the restricted rational Cherednik algebras of $\Gamma$.
\end{proposition}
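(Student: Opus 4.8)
The plan is to combine Theorem~\ref{res_cher_structure_constants_lemma} with Proposition~\ref{rrca_integral_structure_exists}, using the elementary fact that a fixed element of $K$ lies in the localization $\sO_\fm$ for all but finitely many maximal ideals $\fm$ of the Dedekind domain $\sO$.

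First I would fix, independently of $\fm$, a basis $\biy = (y_i)_{i=1}^n$ of $V$ with dual basis $\bix$, a finite generating system $\sG$ of $G$, and --- via Proposition~\ref{rrca_integral_structure_exists}, whose proof in fact produces monomial bases --- bases $\sA$ of $K \lbrack V \rbrack_G$ and $\sB$ of $K \lbrack V^* \rbrack_G$ satisfying condition~\ref{res_cher_structure_constants_lemma:img} of Theorem~\ref{res_cher_structure_constants_lemma}. Since $\sA$ and $\sB$ can be taken to consist of monomials in the images of $\bix$ resp.\ of $\biy$, condition~\ref{res_cher_structure_constants_lemma:img} holds with all coefficients equal to $1$, hence over every subring of $K$.

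Next I would collect the \emph{finite} set $S \subseteq K$ consisting of all structure constants of $K \lbrack V \rbrack_G$ with respect to $\sA$, all structure constants of $K \lbrack V^* \rbrack_G$ with respect to $\sB$, and all entries of the matrices describing the action of the elements of $\sG$ on $V$ in the basis $\biy$ and on $V^*$ in the basis $\bix$. For a nonzero $x \in K$ one has $x \in \sO_\fm$ precisely when $v_\fm(x) \geq 0$, and $v_\fm(x) < 0$ can hold only for the finitely many $\fm$ occurring with negative exponent in the factorization of the fractional ideal $x\sO$; since $0$ lies in every $\sO_\fm$ and $S$ is finite, the set $T \dopgleich \{ \fm \in \Max(\sO) \mid S \not\subseteq \sO_\fm \}$ is finite.

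Finally, for $\fm \in \Max(\sO) \setminus T$ the ring $\sO_\fm$ is a Dedekind domain with quotient field $K$ containing $S$, so the datum $(\biy,\sA,\sB,\sG)$ satisfies conditions~\ref{res_cher_structure_constants_lemma:img}, \ref{res_cher_structure_constants_lemma:str} and \ref{res_cher_structure_constants_lemma:G} of Theorem~\ref{res_cher_structure_constants_lemma} with $\sO$ replaced by $\sO_\fm$; that theorem then shows that every potentially $\sO_\fm$-integral parameter $c \in \fR_\Gamma(K)$ is $\sO_\fm$-integral, and we declare these $\fm$ to be good. The only genuine input beyond Theorem~\ref{res_cher_structure_constants_lemma} is the standard valuation-theoretic fact of the third step, together with the observation that Proposition~\ref{rrca_integral_structure_exists} allows the combinatorial datum to be chosen \emph{before} $\fm$, so that a single finite exceptional set $T$ works; neither step presents a real obstacle, so this proposition is essentially a corollary of the preceding two results.
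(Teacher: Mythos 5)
Your proposal is correct and follows essentially the same route as the paper: fix the datum $(\biy,\sA,\sB,\sG)$ once via Proposition~\ref{rrca_integral_structure_exists}, observe that the set $S$ of relevant structure constants is finite, and use that a finite subset of $K$ lies in $\sO_\fm$ for all but finitely many maximal ideals $\fm$ of the Dedekind domain $\sO$, so that Theorem~\ref{res_cher_structure_constants_lemma} applies with $\sO_\fm$ in place of $\sO$. The only difference is that you spell out the valuation-theoretic step and the subring-independence of condition~\ref{res_cher_structure_constants_lemma:img} in more detail than the paper does.
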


\begin{proof}
Let $\biy$ be a basis of $V$. We know from Proposition \ref{rrca_integral_structure_exists} that  we can find $\sA$ and $\sB$ satisfying Theorem \ref{res_cher_structure_constants_lemma}\ref{res_cher_structure_constants_lemma:img}. Since everything is finite-dimensional, the set $S$ of the structure constants occurring in Theorem \ref{res_cher_structure_constants_lemma}\ref{res_cher_structure_constants_lemma:str} and Theorem\ref{res_cher_structure_constants_lemma}\ref{res_cher_structure_constants_lemma:G} is finite. Since $\sO$ is a Dedekind domain, we have $S \subs \sO_\fm$ for all but finitely many maximal ideals $\fm$ of $\sO$ and so the assumptions in Theorem \ref{res_cher_structure_constants_lemma} are satisfied for the bases $\sA$ and $\sB$, and the ring $\sO_\fm$.
\end{proof}

The proof of Proposition \ref{rrca_integral_structure_exists} and Proposition \ref{rrca_integral_structure_dedekind} gives us an explicit way to find good maximal ideals of $\sO$. This is summarized in Algorithm \ref{rrca_good_maximal_ideals_algo}.
\begin{algorithm}[htbp] 
\caption{Find good maximal ideals} \label{rrca_good_maximal_ideals_algo}
Choose an explicit realization $\biG \subs \GL_n(K)$ of $G$. This amounts to choosing a basis $\biy$ of $V$. Let $\bix$ be the dual basis. \\

Compute fundamental invariants $\bif$ of $\biG$ and $\bif^*$ of the dual group $\biG^*$. \\

Compute a Gröbner basis of $\fh_\biG = \langle \bif \rangle$ and of $\fh_{\biG^*} = \langle \bif^* \rangle$. \\

Compute monomial bases $\sA$ of the coinvariant algebra $K \lbrack \bix \rbrack/\fh_{\biG}$ and $\sB$ of $K \lbrack \biy \rbrack/\fh_{\biG^*}$ using the Gröbner bases. \\

Compute using the Gröbner bases the structure constants of the coinvariant algebras with respect to the bases $\sA$ and $\sB$, respectively.  \\

Let $S$ be the set of all denominators occurring in these structure constants. \\

Choose a generating system $\sG$ of $G$ and extend $S$ by the denominators occurring in the corresponding matrices and their inverses. \\

Then all $\fm$ not containing any element of $S$ are good. 
\end{algorithm}

Precisely this method is performed by the command \code{BadPrimesForRRCA} in \champ. In \cite[\S22]{Thiel.UOn-restricted-ration} we computed sets of primes which contain all bad primes (for explicit choices of the bases) for the exceptional complex reflection groups $\rG_4$ up to $\rG_{28}$ to ensure correctness of our computations. We remark that some of these primes are surprisingly large and we do not yet have a  theoretical explanation for them. 

\subsection{The generic situation for restricted rational Cherednik algebras} \label{gen_rrca}

The primary case we are considering is the following. Let $K$ be a number field with ring of integers $\sO$ and let $R$ be the polynomial ring over $K$ with indeterminates $(c_s)_{s \in \sC_\Gamma}$. Let $c:\sC_\Gamma \rarr R$ be the obvious map and let $\bic$ be the composition of this map with the embedding into the quotient field of $R$. Let $\ol{\bH} \dopgleich \ol{\rH}_c$ be the \word{generic restricted rational Cherednik algebra} for $\Gamma$. Let $\fm \in \Max(\sO)$ be a good maximal ideal. Then for any $u \in \mrm{Che}_\Gamma^{-1}\sO^{\sC_\Gamma}$ the pair $(\fm,u)$ is a finite field specialization and we have the morphism
\[
\begin{tikzcd}
\rG_0(\ol{\rH}_\bic) \arrow{r}{\rd_{\ol{\bH}}^u} \arrow[swap,bend right]{rr}{\rd_{\ol{\bH}}^{\fm,u}} & \rG_0( \ol{\rH}_u ) \arrow{r}{\rd_{\wt{\rH}_u}^\fm} & \rG_0( \wt{\rH}_u(\fm_\fm))
\end{tikzcd}
\]
where $\wt{\rH}_u$ is some $\sO_\fm$-integral structure of $\ol{\rH}_u$. As explained in Remark \ref{high_chance} the probability of this morphism being trivial in the sense that it induces a bijection between the simple modules is quite high. Thus a random choice of $u$ will bring us in position of employing Proposition \ref{strongly_positive_map_props}. It remains to understand how we can lift back the results from the right to the left in this diagram and this is the topic of the next paragraph. 

Before we go there, we point out that the same idea works of course if instead of a parameter $c$ yielding the generic point of the whole parameter space $\fR_\Gamma$ as above we take a parameter yielding the generic point of some closed subscheme of $\fR_\Gamma$, e.g., some hyperplane. To have this possibility at hand was one of the reasons why we chose a general commutative $K$-algebra as base ring everywhere and why we put emphasis on \champ\ being able to handle general base rings. In exactly this way—starting with the generic situation and then considering restrictions to hyperplanes—we approach the cases $\rG_4$, $\rG_{13}$, and $\rG_{20}$.

\section{Reconstructing submodules from abstract structures} \label{modfinder}

Now that we found a way of transporting modules to an algebra over a finite field we have to figure out how we can lift back the results obtained there to the initial setting. The idea is the following: if the morphism $d$ induced by a finite field specialization as in (\ref{d_of_finite_field_spec}) satisfies the condition in Proposition \ref{strongly_positive_map_props}\ref{strongly_positive_map_props_1}, then we can think of it as not destroying the structure of modules. Hence, the ``abstract structure'' of the radical of the image of a module $V$ with simple head under this morphism should be the same as the one of $V$ itself. From this ``abstract structure'' we might be able to compute a candidate for the radical of $V$ and using the morphism $d$ we can check if this candidate was the correct one. Let us now make precise what we mean by ``abstract structure'' and how the candidate production works.

\subsection{Abstract structures}

Let $V$ be an $n$-dimensional vector space over a field $K$ with basis $\biv$ and let $U$ be an $m$-dimensional subspace. For a basis $\biu$ of $U$ let $\rM_\biu^\biv \in \Mat_{n \times m}(K)$ be the matrix of the embedding $U \hookrightarrow V$ with respect to the chosen bases. The class $\sM_U^\biv$ of $\rM_\biu^\biv$ in $\Mat_{n \times m}(K)/\GL_m(K)$ consists precisely of the matrices $\rM_{\biu'}^\biv$ for bases $\biu'$ of $U$. It is an elementary fact that inside $\sM_U^\biv$ there exists precisely one matrix in reduced column echelon form which we denote by $\rM_U^\biv$. Hence, once we fixed a basis of $V$, the subspaces of $V$ are in bijection with $n \times m$-matrices in reduced column echelon form. We will now define the notion of the abstract structure of $U$ with respect to $\biv$ by using the matrix $\rM_U^\biv$. \\

Let $M \in \mrm{Mat}_{n \times m}(K)$. If $\sE(M)$ denotes the set of entries of $M$ and if $\theta:\sE(M) \rarr S$ is a map into a set $S$, then we denote by $\theta^*(M) \in \mrm{Mat}_{n \times m}(S)$ the matrix defined by $(\theta^*(M))_{ij} \dopgleich \theta(M_{ij})$. We denote by $M_{i,\smallbullet}$ the $i$-th row of $M$ and by $M_{\smallbullet,j}$ the $j$-th column of $M$. 
We define $\mrm{Supp}(M_{i,\smallbullet}) \dopgleich \lbrace j \in \lbrack 1,m \rbrack \mid M_{ij} \neq 0 \rbrace$. Analogously we define $\mrm{Supp}(M_{\smallbullet,j})$ and $\mrm{Supp}(M)$. 

Now, suppose that $M$ is in reduced column echelon form. We define two matrices $_{\mrm{c}}M, {_{\mrm{f}}}M \in \mrm{Mat}_{n \times m}(\bbN_{>0})$ as follows. First, decompose $M$ as $M = {_{\mrm{c}}}M + {_{\mrm{f}'}}M$, where each column of ${_{\mrm{c}}}M$ just consists of the leading entry $1$ of the corresponding column of $M$ (if there is one) and ${_{\mrm{f}'}}M$ is the matrix $M-{_{\mrm{c}}}M$. We call ${_{\mrm{c}}}M$ the \word{coarse structure} of $M$. Let $\sE$ be the set of entries of ${_{\mrm{f}'}}M$ and for $x \in \sE$ let $\sE_x \dopgleich \lbrace (i,j) \in \lbrack 1,n \rbrack \times \lbrack 1,m \rbrack \mid M_{ij} = x \rbrace$. We equip each $\sE_x$ with the lexicographical order, which is a total order so that $\sE_x$ has a unique minimum, and define an order $\leq$ on $\sE$ by $x \leq y$ if and only if $\mrm{min} \ \sE_x \leq \mrm{min} \ \sE_y$. This is a total order on the finite set $\sE$ so that assigning to each $x \in \sE$ its position in $\sE$ relative to $\leq$ defines a function $e:\sE \rarr \bbN_{>0}$. We now define ${_{\mrm{f}}}M \dopgleich e^*({_{\mrm{f}'}}M)$ and call this the \word{fine structure} of $M$. We call the pair $\mrm{Abs}(M) \dopgleich ({_{\mrm{c}}}M,{_{\mrm{f}}}M)$, which we also write as ${_{\mrm{c}}}M+{_{\mrm{f}}}M$, the \word{abstract structure} of $M$ and call $\#\sE$ the \word{complexity} of $M$. By $\mrm{Abs}_{n \times m}$ we denote the set of abstract structures of $n \times m$-matrices in reduced column echelon form.

\begin{example}
Let 
\[
M \dopgleich \begin{pmatrix} 1 & 0 \\ 0 & 1 \\ 2 & 1 \\ 1 & 4 \end{pmatrix} = \underbrace{\begin{pmatrix}  1 & 0 \\ 0 & 1 \\ 0 & 0 \\ 0 & 0 \end{pmatrix}}_{_{\mrm{c}}M} + \underbrace{\begin{pmatrix} 0 & 0 \\ 0 & 0 \\ 2 & 1 \\ 1 & 4 \end{pmatrix}}_{{_{\mrm{f}'}}M} \in \mrm{Mat}_{3 \times 2}(\bbQ) \;.
\]
Then
\[
 \mrm{Abs}(M) = \begin{pmatrix} 1 & 0 \\ 0 & 1 \\ 1 & 2 \\ 2 & 3 \end{pmatrix} = \underbrace{\begin{pmatrix} 1 & 0 \\ 0 & 1 \\ 0 & 0 \\ 0 & 0  \end{pmatrix}}_{_{\mrm{c}}M} + \underbrace{\begin{pmatrix} 0 & 0 \\ 0 & 0 \\ 1 & 2 \\ 2 & 3 \end{pmatrix}}_{_{\mrm{f}}M} \in \mrm{Mat}_{3 \times 2}(\bbN_{>0}).
\]
In this example we have $\sE = \lbrace 2,1,4 \rbrace$ and $e:\sE \rarr \lbrack 1,3 \rbrack$ is defined by $e(2) = 1$, $e(1) = 2$, $e(4) = 3$. The complexity of $M$ is equal to $3$.
\end{example}

\begin{definition}
If $V$ is a finite-dimensional vector space over a field $K$ with basis $\biv$, then the \word{abstract structure} $\mrm{Abs}^\biv_U$ with respect to $\biv$ of a subspace $U$ of $V$ is the abstract structure of the matrix $\rM_U^\biv$.
\end{definition}

\begin{definition}
If an abstract structure $M \dopgleich (_\rc M, {_\rf M}) \in \mrm{Abs}_{n \times m}$ with $m \leq n$ is given, then for any map $\theta:\sE(_\rf M) \rarr K^\times$ with $\theta(i) \neq \theta(j)$ for $i \neq j$ we get a matrix $_\rc M + \theta^*{_\rf M} \in \Mat_{n \times m}(K)$ in reduced column echelon form describing a unique subspace $\rU_{M,\theta}^\biv$ of $V$ with respect to the basis $\biv$. We call this subspace the \word{concretization} of $M$ with respect to $\theta$ and $\biv$.  
\end{definition}

\begin{para}
Note that an abstract structure itself is \textit{independent} of a base field—this is precisely the point of abstract structures. 
\end{para}

\subsection{Existence of submodules with prescribed abstract structure}

We can now formulate the primary aim of this paragraph and we do this in a graded setting as the efficiency of \champ\ also relies on the fact that we make use of gradings throughout.

\begin{question} \label{modfinder_question}
Let $A$ be a finite-dimensional $\bbZ$-graded algebra over a field $K$, let $V$ be a $\bbZ$-graded $n$-dimensional $A$-module, and let $\biv \dopgleich (v_i)_{i=1}^n$ be a homogeneous basis of $V$. The question this whole paragraph is about is: 
\begin{quotation}
Given an abstract structure $M \dopgleich  (_\rc M, {_\rf M}) \in \mrm{Abs}_{n \times m}$ with $m \leq n$, is there a graded submodule $U$ of $V$ with $\mrm{Abs}_U^\biv = M$? In other words, is there a map $\theta:\sE(_\rf M) \rarr K^\times$ with $\theta(i) \neq \theta(j)$ for $i \neq j$ such that the concretization $\rU_{M,\theta}^\biv$ is a graded submodule of $V$?
\end{quotation}

\end{question}

To analyze this question we choose a set $\bia \dopgleich (a_k)_{k=1}^r$ of homogeneous $K$-algebra generators of $A$ and denote for each $k \in \lbrack 1,r \rbrack$ by $X^{(k)} \in \Mat_n(K)$ the matrix describing the action of $a_k$ on $V$ in the basis $\biv$, i.e., 
\begin{equation}  \label{A_action}
a_k v_i = \sum_{l=1}^n X_{li}^{(k)} v_l = \sum_{l \in D_{ki}^\rr} X_{li}^{(k)} v_l 
\end{equation}
for all $j \in \lbrack 1,n \rbrack$, where 
\[ 
D_{ki}^\rr \dopgleich \lbrace l \in \lbrack 1 ,n \rbrack \mid \mrm{deg}(a_k) + \mrm{deg}(v_i) = \mrm{deg}(v_l) \rbrace \;.
\]

\begin{theorem} \label{module_abs_existence}
The answer to Question \ref{modfinder_question} is positive if and only if the following conditions are satisfied:
\begin{enum_thm}
\item \label{module_abs_existence:deg} For each $j \in \lbrack 1,m \rbrack$ the degree of $v_i$ is constant for all $i \in \mrm{Supp}(M_{\smallbullet,j})$. We define $d_M^\rc(j)$ to be this degree.
\item There exist pairwise different $\theta_1,\ldots,\theta_s \in K^\times$, where $s$ is the complexity of $M$, and a family 
\[
(Y^{(k,j)}_l)_{\substack{ k \in \lbrack 1,r \rbrack, j \in \lbrack 1,m \rbrack \\ l \in D_{kj}^\rc } } \subs K \;,
\]
where 
\[
D_{kj}^\rc \dopgleich (d_M^\rc)^{-1}(\mrm{deg}(a_k) + d_M^\rc(j)) 
\]
such that the equations
\begin{equation} \label{E1_equ}
E^1_{i,j,k}: \quad \sum_{l \in I_{ijk}} {_\rc M_{lj} X_{li}^{(k)} } + \sum_{l \in I_{ijk}} \theta_{_\rf M_{lj}} X_{il}^{(k)} = 0 
\end{equation}
hold for all $j \in \lbrack 1,m \rbrack$, $k \in \lbrack 1,r \rbrack$, $i \in \mrm{Supp}(M_{\smallbullet,j})$, and such that the equations
\begin{equation} \label{E2_equ}
E_{ijk}^2: \quad \sum_{l \in I_{ijk}} {_\rc M_{lj}} X_{il}^{(k)} + \sum_{l \in I_{ijk}} \theta_{_\rf M_{lj}} X_{il}^{(k)} = \sum_{l \in D_{kj}^\rc} Y_l^{(k,j)} {_\rc M_{il}} +  \sum_{l \in D_{kj}^\rc} Y_l^{(k,j)} \theta_{_\rf M_{il}} 
\end{equation}
hold for all $j \in \lbrack 1,m \rbrack$, $k \in \lbrack 1,r \rbrack$, and $i \in \lbrack 1,n \rbrack \setminus \mrm{Supp}(M_{\smallbullet,j})$, where
\[
I_{ijk} \dopgleich \lbrace l \in \mrm{Supp}(M_{\smallbullet,j}) \mid i \in D_{kl}^\rr \rbrace \;.
\]
\end{enum_thm}
\end{theorem}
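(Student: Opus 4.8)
The plan is to convert Question~\ref{modfinder_question} into an explicit system of polynomial equations in the scalars $\theta_1,\dots,\theta_s$ together with some auxiliary unknowns, and to recognize that system as (\ref{E1_equ})--(\ref{E2_equ}). Write $M_\theta \dopgleich {}_{\rc}M + \theta^*{}_{\rf}M \in \Mat_{n\times m}(K)$ for the concretization matrix, with the convention $\theta_0 \dopgleich 0$, and let $u_1,\dots,u_m \in V$ be its columns, $u_j \dopgleich \sum_{l=1}^n (M_\theta)_{lj}\,v_l$, so that $\rU_{M,\theta}^\biv = \langle u_1,\dots,u_m\rangle_K$. The first elementary observation is that $\mrm{Supp}(M_\theta) = \mrm{Supp}(M)$: this holds precisely because $\theta$ takes values in $K^\times$ and the pivot entries of ${}_{\rc}M$ are equal to $1$.

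First I would settle the grading part. Since $\biv$ is homogeneous and $M_\theta$ has the same support as $M$, the subspace $\rU_{M,\theta}^\biv$ is graded if and only if each $u_j$ is homogeneous, which happens exactly when $\deg(v_i)$ is constant on $i \in \mrm{Supp}(M_{\smallbullet,j})$; this is condition~\ref{module_abs_existence:deg}, and it defines $d_M^\rc(j)$. (For the less obvious implication one uses that, with respect to a homogeneous basis of $V$, a graded subspace is described by a reduced column echelon matrix with homogeneous columns, and that this matrix is the one denoted $M_\theta$.) From here on I would assume condition~\ref{module_abs_existence:deg}.

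Next comes the module condition. Since $\rU_{M,\theta}^\biv$ is now graded, it is an $A$-submodule if and only if $a_k u_j \in \rU_{M,\theta}^\biv$ for every algebra generator $a_k$ and every $j$; and because $a_k u_j$ is homogeneous of degree $\deg(a_k)+d_M^\rc(j)$ while $\rU_{M,\theta}^\biv$ is graded, this membership is equivalent to the existence of scalars $(Y_l^{(k,j)})_{l \in D_{kj}^\rc}$ with $a_k u_j = \sum_{l \in D_{kj}^\rc} Y_l^{(k,j)} u_l$, the sum ranging over exactly those columns of $M_\theta$ of the right degree. Expanding both sides in the basis $\biv$ via (\ref{A_action}) and comparing the coefficient of each $v_i$ gives
\[
\sum_{l \in I_{ijk}} X_{il}^{(k)} (M_\theta)_{lj} \;=\; \sum_{l \in D_{kj}^\rc} Y_l^{(k,j)} (M_\theta)_{il}\,,
\]
where the reduction of the left-hand sum to $l\in I_{ijk}$ is forced by the degree sets $D_{kl}^\rr$. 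Substituting the defining formula for the entries of $M_\theta$ turns this identity into (\ref{E2_equ}); and when $i \in \mrm{Supp}(M_{\smallbullet,j})$ the $Y$-terms on the right are eliminated by the degree constraints together with the echelon shape of $M_\theta$, so the identity degenerates to (\ref{E1_equ}). Running the chain of equivalences backwards --- from a solution $(\theta_1,\dots,\theta_s;\,(Y_l^{(k,j)}))$ of (\ref{E1_equ})--(\ref{E2_equ}) to a genuine graded submodule --- completes the proof.

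The part I expect to be technical rather than deep, but genuinely delicate, is the coefficient bookkeeping in the last step: one must track precisely whether a summation runs over $\mrm{Supp}(M_{\smallbullet,j})$, over $I_{ijk}$, or over $D_{kj}^\rc$, use the reduced column echelon shape of $M_\theta$ (in particular that a pivot row carries a single nonzero entry, equal to $1$) to ensure that no coordinate condition is silently absorbed or duplicated when it is rewritten as an instance of $E^1$ or $E^2$, and check that the passage to this finite system is an equivalence in both directions rather than merely a necessary condition. It is exactly the $\bbZ$-grading of $A$ and the homogeneity of $\biv$ that render $D_{kj}^\rc$, $D_{kl}^\rr$ and $I_{ijk}$ finite and keep the a priori infinite list of submodule constraints organized into the finite, explicit system of the statement.
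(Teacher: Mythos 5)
Your proposal is correct and follows essentially the same route as the paper's proof: condition (i) is exactly the gradedness of the concretization, $A$-invariance is reduced via homogeneity to $a_k u_j \in \langle u_l \mid l \in D_{kj}^{\mathrm{c}}\rangle$, i.e., to the existence of the auxiliary scalars $Y_l^{(k,j)}$, and comparing coefficients of the $v_i$ on both sides of $a_k u_j = \sum_l Y_l^{(k,j)} u_l$ yields precisely the two families of equations, with the converse obtained by reading the chain of equivalences backwards. The only cosmetic difference is that you make the convention $\theta_0 = 0$ explicit, which the paper leaves implicit.
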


\begin{proof}
Suppose that the conditions are satisfied. Let $\theta:\lbrack 1,s \rbrack \rarr K^\times$ be the map with $\theta(i) \dopgleich \theta_i$. Then the concretization $U \dopgleich \rU_{M,\theta}^\biv$ defines a unique subspace of $V$. Let $N_{\smallbullet, j} \dopgleich {_\rc M_{\smallbullet, j}} + (\theta^*{_\rf M})_{\smallbullet, j}$ be the ``specialization'' of the $j$-th column of $M$ in $\theta$. Define
\begin{equation} \label{uj_expr}
u_j \dopgleich \sum_{i=1}^n N_{i,j} v_i = \sum_{i \in \mrm{Supp}(M_{\smallbullet,j})} N_{i,j} v_i = \sum_{i \in \mrm{Supp}(M_{\smallbullet,j})} {_\rc M_{ij}}v_{i} + \theta_{_\rf M_{ij}} v_i \;.
\end{equation}
Then $(u_j)_{j=1}^m$ is a basis of $U$ and because of \ref{module_abs_existence:deg} this is a graded subspace. It remains to show that $U$ is $A$-invariant. This holds if and only if $a_k U \subs U$ for all $k \in \lbrack 1,r\rbrack$, and this in turn holds if and only if $a_k u_j \in U$ for all $k$ and $j$, so $a_k u_j \in \langle u_1, \ldots, u_m \rangle_K$. As $u_j$ is homogeneous of degree $\mrm{deg}(a_k) + d_M^\rc(j)$, this is equivalent to $a_k u_j \in \langle u_l \mid l \in D_{kj}^\rc \rangle$. This is equivalent to the existence of elements $Y_l^{(k,j)} \in K$ such that 
\begin{equation} \label{akuj_expr}
a_ku_j = \sum_{l \in D_{kj}^\rc} Y_l^{(k,j)} u_l \;.
\end{equation}
Combining equations (\ref{A_action}), (\ref{uj_expr}), and (\ref{akuj_expr}) implies that this is equivalent to the following equality for each $j \in \lbrack 1,m \rbrack$ and $k \in \lbrack 1,r \rbrack$:

\begin{align*}
&a_k \left( \sum_{i \in \mrm{Supp}(M_{\smallbullet,j})} {_\rc M_{ij} v_i} + \theta_{_\rf M_{ij}} v_i \right) = \sum_{l \in D_{kj}^\rc} Y_l^{(k,j)} \left(\sum_{i \in \mrm{Supp}(M_{\smallbullet,j})} {_\rc M_{il} v_i} + \theta_{_\rf M_{il}} v_i \right) \allowdisplaybreaks[4]\\
 \Leftrightarrow & \sum_{i \in \mrm{Supp}(M_{\smallbullet,j})} \sum_{l \in D_{ki}^\rr} {_\rc M_{ij}} X_{li}^{(k)} v_l + \sum_{i \in \mrm{Supp}(M_{\smallbullet,j})} \sum_{l \in D_{ki}^\rr} \theta_{_\rf M_{ij}} X_{li}^{(k)} v_l \allowdisplaybreaks[4]\\ & = \sum_{i \in \mrm{Supp}(M_{\smallbullet,j})} \sum_{l \in D_{kj}^\rc} Y_l^{(k,j)} {_\rc M_{il}} v_i +  \sum_{i \in \mrm{Supp}(M_{\smallbullet,j})} \sum_{l \in D_{kj}^\rc} Y_l^{(k,j)} \theta_{_\rf M_{il}} v_i \allowdisplaybreaks[0]\\
 \Leftrightarrow & \sum_{i=1}^n \sum_{l \in I_{ijk}} {_\rc M_{lj}} X_{il}^{(k)} v_i + \sum_{i=1}^n \sum_{l \in I_{ijk}} \theta_{_\rf M_{lj}} X_{il}^{(k)} v_i \\ & = \sum_{i \in \mrm{Supp}(M_{\smallbullet,j})} \sum_{l \in D_{kj}^\rc} Y_l^{(k,j)} {_\rc M_{il}} v_i +  \sum_{i \in \mrm{Supp}(M_{\smallbullet,j})} \sum_{l \in D_{kj}^\rc} Y_l^{(k,j)} \theta_{_\rf M_{il}} v_i \;.
\end{align*}
As $\biv$ is a basis of $V$, each of these equations holds if and only if the coefficients of $v_i$ for each $i \in \lbrack 1,n \rbrack$ are the same. If $i \notin \mrm{Supp}(M_{\smallbullet,j})$ the coefficient equation is
\[
\sum_{l \in I_{ijk}} {_\rc M_{lj}} X_{il}^{(k)} + \sum_{l \in I_{ijk}} \theta_{_\rf M_{lj}} X_{il}^{(k)}  = 0 \;.
\]
If $i \in \lbrack 1,n \rbrack \setminus \mrm{Supp}(M_{\smallbullet,j})$ the coefficient equation is
\[
\sum_{l \in I_{ijk}} {_\rc M_{lj}} X_{il}^{(k)} + \sum_{l \in I_{ijk}} \theta_{_\rf M_{lj}} X_{il}^{(k)} = \sum_{l \in D_{kj}^\rc} Y_l^{(k,j)} {_\rc M_{il}} +  \sum_{l \in D_{kj}^\rc} Y_l^{(k,j)} \theta_{_\rf M_{il}} \;.
\]
These are the two asserted types of equations. It is evident from the discussion that these equations are also necessary for the existence of a graded submodule.
\end{proof}

\subsection{Finding submodules with prescribed abstract structure (ModFinder)}
Let $E^1_{M,\biv} \dopgleich (E^1_{i,j,k})$ be the system of equations defined by (\ref{E1_equ}), let $E^2_{M,\biv} \dopgleich (E^2_{i,j,k})$ be the system of equations defined by (\ref{E2_equ}), and let $E_{M,\biv}$ be the whole system. For finding a graded submodule of $V$ with abstract structure $M$ we have to solve the system $E_{M,\biv}$ for the \textit{$\theta$-variables} $\theta_1,\ldots,\theta_s$ and the \textit{auxiliary variables} $Y_l^{(k,j)}$. If there is a unique submodule with this abstract structure—for example if $M$ is the abstract structure of the unique maximal submodule when $V$ has simple head—this system will have a unique solution we are searching for. \\

While $E^1_{M,\biv}$ is an inhomogeneous linear system for the $\theta$-variables, the system $E^2_{M,\biv}$ is quadratic because of the products $Y_l^{(k,j)} \theta_{\rf M_{il}}$ occurring in the equations. Hence, it will be very difficult in general to solve this system. But we can still try to consecutively solve \textit{linear parts} of this system. Namely, we can start solving $E_{M,\biv}^1$, which is easy as it is a linear system. The point is now that this system might already pin down one of the $\theta$-variables. When plugging in the determined $\theta$-variables into the system $E^2_{M,\biv}$ we might get further \textit{linear} equations just involving the auxiliary variables. If we can determine some of the auxiliary variables, then plugging them into $E^2_{M,\biv}$ might yield new \textit{linear} equations for the $\theta$-variables which might pin down further $\theta$-variables etc. This means we consecutively solve the ``specialized systems'' $L_{M,\biv}(\theta',Y')$ given by the linear equations of the system $E_{M,\biv}$ when plugging in a family $\theta'$ of $\theta$-variables and a family $Y'$ of auxiliary variables. If this process leads to a (unique) solution of $E_{M,\biv}$ we say that this system is (uniquely) \word{linearly solvable}. It might happen, however, that at some stage we cannot determine any new variables—then the system is not linearly solvable.  \\

As we will work with modules of dimension up to 3,000 we need a very efficient strategy for determining the $\theta$-variables by linear equations of $E_{M,\biv}$ (if this is possible at all). To this end, we define for any $q \in \lbrack 1,s \rbrack$ a subsystem of $L_{M,\biv}^q(\theta',Y')$ just consisting of the linear equations of $E_{M,\biv}(\theta',Y')$ involving $\theta_q$ and all \word{dependent variables}. To make this precise, denote for a subsystem $E$ of $E_{M,\biv}$ by $\Theta(E)$ the set of non-determined $\theta$-variables occurring in these equations. For $q \in \lbrack 1,s \rbrack$ let $\wt{L}_{M,\biv}^{q}(\theta',Y')$ just consist of the equations of $L_{M,\biv}(\theta',Y')$ involving the variable $\theta_q$, i.e., 
\[
\wt{L}_{M,\biv}^{q}(\theta',Y') \dopgleich \lbrace L \in L_{M,\biv}(\theta',Y') \mid \theta_q \in \Theta(L) \rbrace \;.
\]
Now, define $L_{M,\biv}^{q}(\theta',Y')$ inductively as follows. First,  $L_{M,\biv}^{q}(\theta',Y') \dopgleich \wt{L}_{M,\biv}^{q}(\theta',Y') $. For each $\theta_p \in \Theta(L_{M,\biv}^{q}(\theta',Y') )$ we add to $L_{M,\biv}^{q}(\theta',Y') $ the equations of $\wt{L}_{M,\biv}^p(\theta',Y')$. We repeat this process until  $L_{M,\biv}^{q}(\theta',Y')$ stabilizes.

We will split the system $E_{M,\biv}$ once more by defining $L_{M,\biv}^{q,\mbi{g}}(\theta',Y')$ for a subset $\mbi{g} \subs \lbrack 1,r \rbrack$ as the subsystem of $L_{M,\biv}^{q}(\theta',Y')$ just involving equations $E_{ijk}^1$ or $E_{ijk}^2$ with $k \in \mbi{g}$. The idea behind this is that we do not want to consider all algebra generators at once—perhaps a few algebra generators will be sufficient to determine all $\theta$-variables and this means we have to consider fewer equations. This idea turned out to be very efficient in experiments (see \S\ref{experiments}). \\
Our idea of solving $E_{M,\biv}$ is now summarized in Algorithm \ref{modfinder_alg}.
\begin{algorithm}[htbp] 
 \caption{Finding submodules with prescribed abstract structure (\textsc{ModFinder})}
 \label{modfinder_alg}
\KwData{Data as in Question \ref{modfinder_question} and Theorem \ref{module_abs_existence} satisfying Theorem \ref{module_abs_existence}\ref{module_abs_existence:deg}, and a subset $\mbi{g} \subs \lbrack 1,r \rbrack$.}
\KwResult{Decides if the system $E_{M,\biv}$ is uniquely linearly solvable. If so, returns a graded submodule $U$ of $V$ with $\mrm{Abs}_U^\biv = M$.}  

$\theta' \dopgleich \emptyset$; $Y \dopgleich \emptyset$\;

\While{$\# \theta' \neq s$}
{
progress $\dopgleich$ false\;
    \For{$q \in \Theta(E_{M,\biv}(\theta',Y'))$ \label{modfinder_thetaq_choice}} 
    {
        \If{$L_{M,\biv}^{q,\mbi{g}}(\theta',Y') \tn{ is not consistent}$}
        {
        \Return{There is no graded submodule with abstract structure $M$}\;
        }
        \tn{Let $\theta''$ and $Y''$ be the $\theta$-variables and auxiliary variables, respectively, determined by $L_{M,\biv}^{q,\mbi{g}}(\theta',Y') $}\;
        \If{$\theta'' \tn{ or } Y'' \tn{ contains a variable not in } \theta' \tn{ or } Y'\tn{, respectively,}$}
        {
        $\theta' \dopgleich \theta' \cup \theta''$; $Y' \dopgleich Y' \cup Y''$\;
        progress $\dopgleich$ true\;
        }   
    }
    \If{\tn{progress = false}}
    {
        \eIf{$\mbi{g} = \lbrack 1,r \rbrack$}
        {
            \Return{$E_{M,\biv}$ is not uniquely linearly solvable}\;
        }
        {
            \tn{Repeat the above algorithm with $\mbi{g} = \lbrack 1,r \rbrack$}\;
        }
    }
}
\tn{Check if $\rU_{M,\theta}^\biv$ is indeed a submodule of $V$}\; \label{modfinder_submod_check}
\eIf{\tn{this is true}}
{
    \Return{$ \rU_{M,\theta}^\biv$}\;
}
{
    \Return{$E_{M,\biv}$ is not uniquely linearly solvable}\;
}
\end{algorithm}
This algorithm—which we call the \textsc{ModFinder} algorithm—has been implemented in this way (and with several additional ideas we cannot discuss here) in \champ\ in the subpackage \code{ModFinder}. In line \ref{modfinder_submod_check} we have to check whether the concretization $\rU_{M,\theta}^\biv$ is indeed a submodule as we are just solving subsystems of $E_{M,\biv}$ and just verify necessary conditions up to this point. This can efficiently be checked using the \word{graded spinning algorithm}—a graded adaption of the standard spinning algorithm explained for example in \cite[\S1.3]{LP-representations}. All this is provided by the new type \code{ModGr} for graded modules we have implemented in \champ.

\begin{remark}
Obviously, there is no reason why we can solve $E_{M,\biv}$ just by consecutively solving specialized linear subsystems. For the radicals of Verma modules for restricted rational Cherednik algebras, however, this surprisingly turned out to be almost always the case and our algorithm was amazingly efficient—we cannot yet give theoretical arguments in favor of this. 
\end{remark}

\begin{remark}
In experiments we observed that the choice of $\mbi{g}$ and the order in which we try to determine $\theta$-variables (line \ref{modfinder_thetaq_choice} in Algorithm \ref{modfinder_alg}) can have a serious impact on the runtime of the algorithm (see \S\ref{experiments}). We do not know yet how to determine an optimal choice of $\mbi{g}$ and on the sequence of $\theta$-variables to solve for. The interaction between the subsystems $L_{M,\biv}^{q,\mbi{g}}(\theta',Y')$ seems to be very hard to understand. In \champ\ we have implemented a selection process for the systems which performs quite well in experiments.  
\end{remark}

\section{A Las Vegas algorithm for computing heads and constituents}
\label{heads}

With the theory of finite field specializations and the \textsc{ModFinder} algorithm we can now turn our idea explained abstractly in Strategy \ref{computing_const_abstract} into an algorithm. The result is Algorithm \ref{dec_matrix_algorithm}. Remember that we are considering a finite-dimensional algebra $A$ over a field and a family $(V_\lambda)_{\lambda \in \Lambda}$ of finite-dimensional $A$-modules with simple heads $(S_\lambda)_{\lambda \in \Lambda}$ such that this family is constituent-closed, meaning that every constituent of a member $V_\lambda$ of this family is the head $S_\mu$ of some $V_\mu$. Algorithm \ref{dec_matrix_algorithm} attempts to compute the simple modules $S_\lambda$ and the multiplicities of $S_\mu$ in $V_\lambda$.
\begin{algorithm} 
\caption{Computing heads and decomposition matrices} \label{dec_matrix_algorithm}
    \KwData{Data as explained in \S\ref{heads}}
\KwResult{If successful, returns the simple modules $S_\lambda$ and the multiplicity $m_{\lambda,\mu}$ of $S_\mu$ in $V_\lambda$.}
    \tn{Randomly choose a strongly positive morphism } $d:\rG_0(A) \rarr \rG_0(B)$ \tn{ with } $B$ \tn{ a finite-dimensional algebra over a finite field}\; \label{dec_matrix_alg_random}
    \For{$\lambda \in \Lambda$}
    {
        \tn{Compute a representative $\ol{V}_\lambda$ of $d(\lbrack V \rbrack)$} \;
        \tn{Compute using the \textsc{MeatAxe} the radical } $\ol{J}_\lambda$ \tn{ of } $\ol{V}_\lambda$\;
        \tn{Determine the abstract structure } $\ol{J}_\lambda^{\mrm{abs}}$ \tn{ of } $J$ \tn{ in } $\ol{V}_\lambda$\;
        \tn{Using algorithm \ref{modfinder_alg} try to find a submodule } $J_\lambda$ \tn{ of } $V_\lambda$ \tn{ with abstract structure } $\ol{J}_\lambda^{\mrm{abs}}$\;
        \uIf{$J_\lambda$\tn{ could not be determined}}
        {
       \Return{No success}\;
        }
        \Else
        {
            $Q_\lambda \dopgleich V_\lambda/J_\lambda$ \;
            \tn{Compute a representative } $\ol{Q}_\lambda$ \tn{ of } $d(\lbrack Q_\lambda \rbrack)$\;
            \tn{Check using the \textsc{MeatAxe} if } $\ol{Q}_\lambda$ \tn{ is irreducible}\;
            \If{\tn{this is not true}}
            {
                \Return{No success}\;
            }
            
        }
    }    
    \For{$\lambda \in \Lambda$}
    {
        \tn{Compute using the \textsc{MeatAxe} the constituents } $(\ol{U}_{\lambda,\theta})_{\theta \in \Theta_\lambda}$ \tn{ and their multiplicities } $m_{\lambda,\theta}$ \tn{ of } $\ol{V}_\lambda$\;
        \tn{Find using the \textsc{MeatAxe} an injection } $\iota_\lambda:\Theta_\lambda \hookrightarrow \Lambda$ \tn{ such that } $\ol{U}_{\lambda,\theta} \cong \ol{Q}_{\mu}$ \tn{for } $\mu \in \Lambda$ \tn{ and } $\theta \in \Theta_\lambda$ \tn{ if and only if } $\mu = \iota_\lambda(\theta)$\;
        \If{\tn{no such injection exists}}
        {
            \Return{No success}\;
        }
        $m_\lambda,\iota_\lambda(\theta) \dopgleich m_{\lambda,\theta}$ for all $\theta \in \Theta_\lambda$ \tn{ and } $m_{\lambda,\mu} \dopgleich 0$ \tn{ for all } $\mu \notin \Im \iota_\lambda$\;        
    }

    \Return{$(Q_\lambda)_{\lambda \in \Lambda}, \ (m_{\lambda,\mu})_{\lambda,\mu \in \Lambda}$}\;
\end{algorithm}

We see that there are three branches in our algorithm whose result will be that the algorithm is not successful. On the other hand, if the algorithm is successful, it follows from our discussion that the result returned is the correct result. This means that our algorithm is a so-called Las Vegas algorithm, like the \textsc{MeatAxe} itself. Because of this it is not easy to provide a complexity analysis of our approach. Note that whenever the algorithm is unsuccessful, it makes sense to run it again with a different finite field specialization.

\begin{remark}
In \champ\ we have implemented an extension of the above algorithm motivated by the few cases where it was not successful. Namely in this case we randomly pick a vector $v \in V_\lambda$ and compute (using the graded spinning algorithm) the submodule $U$ of $V_\lambda$ it generates. In case it is a proper submodule,  we compute the quotient $Q \dopgleich V_\lambda/U$ and apply our algorithm to $Q$. If it is again not successful, we repeat this process. With this simple extension we could indeed obtain all results for restricted rational Cherednik algebras we could compute so far.
\end{remark}

\subsection{Application to Gordon's questions}

Let us discuss how we apply our algorithm to Gordon's questions \S\ref{gordon_problems} in case of generic restricted rational Cherednik algebras (see \S\ref{gen_rrca}) for irreducible complex reflection groups. First, we choose a realization $\Gamma$ of the reflection group over a number field $K$ with ring of integers $\sO$ (this is always possible). Then we compute which maximal ideals of $\sO$ are certainly good using Algorithm  \ref{rrca_good_maximal_ideals_algo}. Next, we compute the generic Euler families $\mrm{Eu}_\bic$ (see \S\ref{euler_families}). For each Euler family $\Lambda$ the Verma modules $(\Delta_\bic(\lambda))_{\lambda \in \Lambda}$ form a constituent closed family of modules with simple head to which we apply our algorithm. 

The random finite field specialization (line \ref{dec_matrix_alg_random} of  Algorithm \ref{dec_matrix_algorithm}) is chosen as $\rd_{\ol{\bH}}^{\fm,u}$ by randomly choosing a good maximal ideal $\fm$ and a point $u \in \mrm{Che}_\Gamma^{-1}\sO^{\sC_\Gamma}$ as explained in \S\ref{gen_rrca}. All this is automatically performed in \champ\ by the commands \code{HeadOfLocalModule} and \code{HeadsOfLocalModules} contained in the subpackage \code{RadicalLift}. This command is in general applicable to any constituent closed family of  modules with simple head over an algebra over a rational function field over a number field. Note, however, that one has to ensure by theory that the chosen data $(\fm,u)$ is indeed a finite field specialization in the sense of \S\ref{ffspecs}.

If successful, our algorithm computes the generic Verma families (see \S\ref{verma_families}) and due to the result by Bonnafé–Rouquier (see \S\ref{verma_families}) we also know the Calogero–Moser families. Note that in case of success we have also explicitly computed the simple modules so that we know their dimension, their Poincaré series, and using character theory we can also compute their structure as graded $G$-modules. In this way we can answer all of Gordon's questions.

The same idea is of course applicable if we do not start with the generic algebra $\ol{\bH}$ but with its restriction to a hyperplane, say. This is exactly what we did to get the results in for $\rG_4$, $\rG_{13}$, and $\rG_{22}$.

\begin{remark}
If we work with a generic restricted rational Cherednik algebra $\ol{\bH}$ for a reflection group $\Gamma$ over a \textit{finite} field $K$ which splits over $K$, the choice of the morphism $d$ in line \ref{dec_matrix_alg_random} of the algorithm is actually simpler. As restricted rational Cherednik algebras split, we have a decomposition morphism $\rd_{\ol{\bH}}^\fp: \rG_0(\ol{\bH}(0)) \rarr \rG_0(\ol{\bH}(\fp))$ for any prime ideal $\fp$ of the base ring of $\ol{\bH}$ and we can choose for $\fp$ any $K$-point of $\fR_\Gamma$. This approach is also covered by \champ\ and it applies in particular to Verma modules for rational Cherednik algebras at $t=1$ in positive characteristic (see \cite{BelMar-On-the-smoothness-of-cent-0}).
\end{remark}

\section{Summary of the results} \label{further_results}

\begin{parani}
We summarize here as theorems the results we could get so far using \champ. All results are listed explicitly in tabular form in the ancillary document of this article. We also comment on some observations in the hope that some general theorem lies behind them. The reader should check the website \url{http://thielul.github.io/CHAMP/} and \cite{Thi-CHAMP:-A-Cherednik-A2014} for further results obtained after publication of this article. 
\end{parani}

\begin{theorem} \label{theorem_generic}
For \textit{generic} parameters for the groups 
\[
\rG_{4}, \rG_{5}, \rG_6,\rG_7,\rG_8,\rG_9,\rG_{10},\rG_{12},\rG_{13},\rG_{14},\rG_{15}, \rG_{16}, \rG_{20},\rG_{22}, \rG_{23}=\rH_3, \rG_{24} 
\]
the following holds:
\begin{enum_thm}
\item We have the explicit answers to all of Gordon's questions.
\item Martino's generic parameter conjecture holds.
\item The Calogero–Moser families are equal to the Euler families. This implies that the locus of ``exceptional'' parameters, i.e., those parameters for which the Calogero--Moser families become coarser than the generic Calogero–Moser families, is contained in the Euler variety and is thus a union of hyperplanes.
\item \label{theorem_generic_palindromic} The Poincaré series of the simple modules are palindromic, i.e., their list of coefficients can be reversed without changing the polynomial. \qed
\end{enum_thm}
\end{theorem}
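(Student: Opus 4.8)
The plan is to establish all four parts simultaneously, one group at a time, by running the Las Vegas algorithm of \S\ref{heads} (Algorithm \ref{dec_matrix_algorithm}) on the Verma modules of the generic restricted rational Cherednik algebra $\ol{\bH}$, organised into constituent-closed families by means of the Euler families. For a fixed group from the list I would first fix a realisation $\Gamma$ over a number field $K$ with ring of integers $\sO$, determine a set of certainly-good maximal ideals of $\sO$ via Algorithm \ref{rrca_good_maximal_ideals_algo} (together with the bounds on bad primes from \cite[\S22]{Thiel.UOn-restricted-ration}), and compute the generic Euler families $\mrm{Eu}_\bic$ from the character table of $G$ as in \S\ref{euler_families}. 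For each Euler family $\Lambda$ the Verma modules $(\Delta_\bic(\lambda))_{\lambda\in\Lambda}$ are finite-dimensional, indecomposable, and have simple heads $\rL_\bic(\lambda)$, and they form a constituent-closed family because the Euler families are coarser than the Verma families; thus Algorithm \ref{dec_matrix_algorithm} applies, and I would feed it the random finite field specialisation $\rd_{\ol{\bH}}^{\fm,u}$ built from a random good $\fm$ and a random point $u\in\mrm{Che}_\Gamma^{-1}\sO^{\sC_\Gamma}$. By Remark \ref{high_chance} such a specialisation is generically trivial, so with high probability the hypotheses of Proposition \ref{strongly_positive_map_props}\ref{strongly_positive_map_props_1} are met and the algorithm has a chance to succeed.

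If the algorithm does succeed then, being of Las Vegas type, its output is provably correct: it returns the simple modules $\rL_\bic(\lambda)$ as explicit graded modules together with the decomposition numbers $[\Delta_\bic(\lambda):\rL_\bic(\mu)]$. This yields part (i) at once---the dimensions and Poincaré series of the $\rL_\bic(\lambda)$ are read off, their graded $G$-character is recovered from the explicit action together with the character table of $G$, and the constituents of the Verma modules are the decomposition numbers---and it also yields the generic Verma families $\mrm{Ver}_\bic$. Part (iii) then follows by squeezing: one checks from the explicit output that $\mrm{Ver}_\bic=\mrm{Eu}_\bic$, and since $\mrm{Ver}_\bic\leq\mrm{CM}_\bic\leq\mrm{Eu}_\bic$ (by \S\ref{verma_families} and \S\ref{euler_families}) this forces $\mrm{CM}_\bic=\mrm{Eu}_\bic$, which answers the third of Gordon's questions and, because the eigenvalues of the Euler element are linear in the parameter, shows that the exceptional locus lies in a union of hyperplanes. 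For part (ii) I would compare the partition $\mrm{CM}_\bic=\mrm{Eu}_\bic$ just obtained with Chlouveraki's generic Rouquier families $\mrm{Rou}_{\bik}$ of \cite{Chl09-Blocks-and-famil}, which are tabulated explicitly; equality of these two partitions is precisely Martino's generic parameter conjecture in the reformulation of \S\ref{martino_conjecture_subsection} (and, in view of the failure of the conjecture for $\rG_{25}$ established in \cite{Thiel.U13A-Counter-Example-to}, this equality is a genuine statement rather than a formality). Finally, part (iv) is verified directly on the computed list of Poincaré series, by checking that reversing the list of coefficients leaves each polynomial unchanged.

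The main obstacle is not an isolated hard lemma but the twofold uncertainty inherent in a Las Vegas approach, and it is here that the theory of \S\ref{ffspecs} is indispensable. First, one must \emph{guarantee by hand} that the randomly chosen pair $(\fm,u)$ really is a finite field specialisation in the sense of Definition \ref{finite_field_spec_def}---that $\fm$ is good for the restricted rational Cherednik algebras of $\Gamma$, and that $u$ lands in $\mrm{Che}_\Gamma^{-1}\sO^{\sC_\Gamma}$---so that the morphism $\rd_{\ol{\bH}}^{\fm,u}$ of (\ref{d_of_finite_field_spec}) is defined and strongly positive; the machine does not certify this. Second, the \textsc{ModFinder} step (Algorithm \ref{modfinder_alg}) may fail to be linearly solvable, and the \textsc{MeatAxe} irreducibility checks may fail, simply because the chosen specialisation was not generic enough; in that event the only recourse is to re-run with a fresh $(\fm,u)$, or to invoke the vector-picking extension described in the remark following Algorithm \ref{dec_matrix_algorithm}. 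For the sixteen groups in the list this strategy does terminate successfully---that is the content of the theorem---but there is as yet no a priori bound on the number of attempts required, nor a structural explanation (compare the remarks in \S\ref{modfinder}) for why the radicals of Verma modules are so reliably linearly solvable; in particular part (iv) is at present an empirical feature of the computed data and not a consequence of any known duality on the category of $\ol{\bH}$-modules.
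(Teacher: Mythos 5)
Your proposal follows essentially the same route as the paper: organise the generic Verma modules into constituent-closed families via the Euler families, run Algorithm \ref{dec_matrix_algorithm} with a finite field specialisation $\rd_{\ol{\bH}}^{\fm,u}$ certified by Algorithm \ref{rrca_good_maximal_ideals_algo}, read off parts (i) and (iv) from the explicit output, obtain (iii) by the squeeze $\mrm{Ver}_\bic\leq\mrm{CM}_\bic\leq\mrm{Eu}_\bic$ once the computation shows $\mrm{Ver}_\bic=\mrm{Eu}_\bic$, and obtain (ii) by comparison with Chlouveraki's tables. This is exactly how the paper justifies the theorem (whose formal proof is the computation itself, recorded in the database and ancillary document), so the proposal is correct and not a genuinely different argument.
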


\begin{theorem} \label{theorem_special}
For \textit{all} parameters for the groups 
\[
\rG_{4}, \rG_{12}, \rG_{13}, \rG_{20}, \rG_{22}, \rG_{23}=\rH_3, \rG_{24} 
\]
the following holds\footnote{Note that there is just one parameter for $\rG_{12}$, $\rG_{22}$, $\rG_{23}$, and $\rG_{24}$ so these results are just the generic ones. But for $\rG_4$,  $\rG_{13}$, and $\rG_{20}$ there are two parameters and here much more work has to be done.}:
\begin{enum_thm}
\item We have the explicit answers to all of Gordon's questions.
\item Martino's conjecture holds in its complete form, i.e., the Rouquier $k^\sharp$-families refine the Calogero–Moser $k$-families for all parameters $k$. \qed
\end{enum_thm}
\end{theorem}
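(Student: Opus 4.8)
The plan is to reduce Theorem~\ref{theorem_special} to a finite list of computations and then run \champ. For $\rG_{12}$, $\rG_{22}$, $\rG_{23}=\rH_3$, and $\rG_{24}$ there is only one essential parameter, so "all parameters" coincides with the generic situation and the statement is exactly Theorem~\ref{theorem_generic}; nothing more is needed. The work is therefore concentrated on $\rG_4$, $\rG_{13}$, $\rG_{20}$, where $\fR_\Gamma$ is a plane. Here I would invoke the reductions of \S\ref{problem_reductions} and \S\ref{martino_prep}: decomposition morphisms and blocks are generically trivial, so the answers to Gordon's questions and the Calogero--Moser families from Theorem~\ref{theorem_generic} already hold on a non-empty open subset of $\fR_\Gamma$. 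By Theorem~\ref{theorem_generic}(iii) the complement (the "exceptional locus") is contained in the Euler variety and is hence a finite union of hyperplanes, each of which can be read off from the generic Euler data. It then remains to treat each such hyperplane $h$ separately, and this is possible because the generic point of $h$ is again a generic point of a closed subscheme of $\fR_\Gamma$, which \champ\ handles via its general-base-ring design (\S\ref{gen_rrca}).

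For a fixed hyperplane $h$, pick a realization of $\Gamma$ over a number field $K$ with ring of integers $\sO$, and let $c$ be the generic point of $h$, so that $\ol{\rH}_c$ is an algebra over the function field of $h$. First run Algorithm~\ref{rrca_good_maximal_ideals_algo} (i.e. \code{BadPrimesForRRCA}) to produce a finite set containing all bad maximal ideals of $\sO$; for a good $\fm$ and any potentially $\sO_\fm$-integral $u$ specialising $c$, Theorem~\ref{res_cher_structure_constants_lemma} together with Proposition~\ref{rrca_integral_structure_dedekind} shows that $(\fm,u)$ is a genuine finite field specialization, so the induced $\rd_{\ol{\rH}}^{\fm,u}$ is a strongly positive morphism into the Grothendieck group of an algebra over a finite field. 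Next compute the generic Euler families (\S\ref{euler_families}); for each one $\Lambda$, the Verma modules $(\Delta_c(\lambda))_{\lambda\in\Lambda}$ form a constituent-closed family with simple heads, and I would feed it to Algorithm~\ref{dec_matrix_algorithm} (\code{HeadsOfLocalModules}): transport along $\rd^{\fm,u}$ as in Remark~\ref{mod_finite_field_spec}, apply the \textsc{MeatAxe} over the finite field, lift the radical to characteristic zero with \textsc{ModFinder} (Algorithm~\ref{modfinder_alg}), verify irreducibility of the candidate head back over the finite field, and finally match constituents. When this succeeds it returns the explicit simple modules $\rL_c(\lambda)$ and the decomposition matrix of the Verma modules.

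From these outputs the conclusions follow. The decomposition matrices give the Verma families $\mrm{Ver}_c$, and by the Bonnafé--Rouquier result quoted in \S\ref{verma_families} one has $\mrm{Ver}_c=\mrm{CM}_c$; combined with the generic answer this yields $\mrm{CM}_c$ for every parameter $c$, answering Gordon's third question in all cases. The explicit simple modules provide their dimensions and Poincaré series, and character theory converts this into the graded $G$-module structure, answering the first two questions; this settles part (i) of Theorem~\ref{theorem_special}. For part (ii), compute Chlouveraki's Rouquier families on each essential hyperplane (and generically) and compare: on the generic locus one uses the tower $\mrm{Ver}_c\le\mrm{CM}_c\le\mrm{Eu}_c=\Pi_{c^\sharp}\ge\mrm{Rou}_{c^\sharp}$ of \S\ref{euler_families}--\S\ref{verma_families}, so that $\mrm{Ver}_c$ equal to $\mrm{Eu}_c$ forces $\mrm{Rou}_{c^\sharp}$ to refine $\mrm{CM}_c$, and on each exceptional hyperplane the explicitly computed $\mrm{CM}$ is checked directly against $\mrm{Rou}$. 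One also records, for the groups where it can be pinned down, that the exceptional locus is precisely the union of Chlouveraki's essential hyperplanes (with the one extra hyperplane for $\rG_8$, which does not occur among the groups in Theorem~\ref{theorem_special}).

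The main obstacle is that Algorithm~\ref{dec_matrix_algorithm} and the \textsc{MeatAxe} are Las Vegas: the algorithm may simply fail to return, either because the random finite field specialization fails to be $X$-generic in the sense of Proposition~\ref{strongly_positive_map_props}\ref{strongly_positive_map_props_1}, or because \textsc{ModFinder}'s linear-solvability heuristic does not pin down all $\theta$-variables for the radical of some Verma module. The substance of the theorem is that in these particular cases it does succeed (retrying with fresh $(\fm,u)$, and with the spinning-out extension of the remark after Algorithm~\ref{dec_matrix_algorithm} when needed). A second, more structural point requiring care is the certification that the randomly chosen $(\fm,u)$ really is a finite field specialization — i.e. $\fm$ good and $u$ potentially integral — which is exactly what the integral-structure theory of \S\ref{ffspecs} supplies; and one must be certain the list of exceptional hyperplanes is complete, which rests on the a priori containment in the Euler variety plus an exhaustive check along each of its finitely many hyperplanes.
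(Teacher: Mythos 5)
Your proposal matches the paper's approach essentially verbatim: this theorem is a summary of explicit computations, and the paper "proves" it exactly as you describe — taking the generic results, using the reductions of \S\ref{problem_reductions} together with the containment of the exceptional locus in the Euler variety to reduce to finitely many hyperplanes, and then running the finite-field-specialization/\textsc{ModFinder} pipeline (Algorithm \ref{dec_matrix_algorithm}) on the generic point of each hyperplane for $\rG_4$, $\rG_{13}$, $\rG_{20}$, with the one-parameter groups already covered by Theorem \ref{theorem_generic}. Your account of how the outputs yield Gordon's answers (via Bonnaf\'e--Rouquier's $\mrm{Ver}_c=\mrm{CM}_c$) and Martino's conjecture (comparison with Chlouveraki's Rouquier families) is exactly the paper's argument.
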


\begin{theorem} \label{theorem_dec_matrix}
In all the cases covered by theorems \ref{theorem_generic} and \ref{theorem_special} the following property holds: if $\lambda$ is a character of minimal degree $d$ in a Calogero–Moser family $\sF$, then the multiplicity of $\rL(\mu)$ in $\Delta(\lambda)$ for $\mu \in \sF$ is a positive multiple of $d$.  \qed
\end{theorem}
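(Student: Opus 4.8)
The plan is to verify the statement directly from the explicitly computed graded decomposition numbers, since Theorem~\ref{theorem_dec_matrix} quantifies over exactly the groups and parameters of Theorems~\ref{theorem_generic} and~\ref{theorem_special}, for all of which Algorithm~\ref{dec_matrix_algorithm} has successfully produced the decomposition matrices $\big([\Delta(\lambda):\rL(\mu)]\big)$ of the Verma modules together with the Calogero--Moser families. For each group $\rG_i$ in the list, each relevant parameter, and each Calogero--Moser family $\sF$, I would first single out the characters $\lambda\in\sF$ of minimal degree $d=\min\{\dim\nu:\nu\in\sF\}$, then read off from the decomposition matrix, for every such $\lambda$ and every $\mu\in\sF$, the multiplicity $m_{\lambda\mu}=[\Delta(\lambda):\rL(\mu)]$, and finally check the two purely arithmetic assertions $m_{\lambda\mu}>0$ and $d\mid m_{\lambda\mu}$. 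This is mechanical once the matrices are available, and the data can be accessed directly through the database interface of \champ\ (see \S\ref{champ_db}) and is listed in the ancillary document; so the verification amounts to running this check over all families.

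Two remarks streamline the bookkeeping. First, by the central-character argument underlying \S\ref{rrca_represent_theory} and \S\ref{euler_families}: the Euler element $\mrm{eu}_c$ is central, acts on $\rL_c(\mu)$ by the scalar $\Omega^c_\mu(\mrm{eu}_c)$, and acts on the degree-$i$ component of $\Delta_c(\lambda)$ by $h_c(\lambda)+i$ for a fixed constant $h_c(\lambda)$; hence $\rL_c(\mu)$ can occur in $\Delta_c(\lambda)$ only in the single graded shift $j=\Omega^c_\mu(\mrm{eu}_c)-h_c(\lambda)$, so the graded decomposition number is a monomial and nothing is lost in passing to ungraded multiplicities. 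Second, since $\Delta_c(\lambda)\cong K\lbrack V\rbrack_G\otimes_K\lambda$ as $G$-modules we have $\dim\Delta_c(\lambda)=|G|\cdot d$, whence $d\mid\sum_{\mu\in\sF}m_{\lambda\mu}\dim\rL_c(\mu)$; this is a consistency check the tabulated numbers must pass, though it does not by itself force $d\mid m_{\lambda\mu}$.

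The main difficulty is that no conceptual proof is available: both the divisibility $d\mid m_{\lambda\mu}$ and, a fortiori, the positivity $m_{\lambda\mu}>0$ for every member of the family are empirical regularities of the computed matrices, not known consequences of general structure theory. The natural attempt would be to show that in these cases a character of minimal degree in a family is also extremal for the $c$-function in its block, so that $\Delta_c(\lambda)$ coincides with the projective cover $P_c(\lambda)$ in the Holmes--Nakano highest-weight structure; BGG-type reciprocity together with self-duality of decomposition numbers would then express $m_{\lambda\mu}=[P_c(\lambda):\rL_c(\mu)]=\sum_\nu[\Delta_c(\nu):\rL_c(\lambda)][\Delta_c(\nu):\rL_c(\mu)]$, and one would hope to read the factor $d$ off this formula. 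Establishing either the extremality statement or the extraction of $d$ in general is precisely where such an argument stalls, which is why the theorem is asserted only in the computed range. The real content of the proof is therefore the correctness of the underlying computations --- in particular that each pair $(\fm,u)$ used really is a finite field specialization in the sense of \S\ref{ffspecs}, that the maximal ideals are good in the sense of Proposition~\ref{rrca_integral_structure_dedekind}, and that Algorithm~\ref{dec_matrix_algorithm} terminated with success --- all guaranteed by how \champ\ was set up.
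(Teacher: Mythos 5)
Your proposal matches the paper's approach exactly: Theorem~\ref{theorem_dec_matrix} is stated with an immediate \qed because its proof \emph{is} the inspection of the explicitly computed decomposition matrices and Calogero--Moser families tabulated in the ancillary document, whose correctness rests on the finite field specializations being valid and Algorithm~\ref{dec_matrix_algorithm} succeeding — precisely the points you identify. Your additional remarks (the single-graded-shift observation and the honest acknowledgement that no conceptual proof via extremality/BGG reciprocity is available) are accurate and consistent with the paper's framing of this as an empirical regularity prompting Question~\ref{questions}.
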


\begin{theorem}
For the groups
\[
\rG_{4}, \rG_6, \rG_8, \rG_{12}, \rG_{13}, \rG_{14}, \rG_{20}, \rG_{22}, \rG_{23}=\rH_3, \rG_{24} 
\]
we explicitly know the locus of ``exceptional'' parameters. Except for the group $\rG_8$ it coincides precisely with the union of Chlouveraki's essential hyperplanes for cyclotomic Hecke algebras \cite{Chl09-Blocks-and-famil}. For $\rG_8$, however, the Euler hyperplane $k_{1,1} - k_{1,2} + k_{1,3} $ is one additional ``exceptional'' non-essential hyperplane.\footnote{This was first discovered by Bonnafé using different methods.} \qed
\end{theorem}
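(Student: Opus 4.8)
The plan is to establish the statement by explicit computation, following the strategy of \S\ref{problem_reductions} together with the application of Algorithm \ref{dec_matrix_algorithm} described in \S\ref{gen_rrca}. Fix one of the ten groups $\Gamma$ in the list and a realization over a number field $K$ with ring of integers $\sO$. Using Algorithm \ref{rrca_good_maximal_ideals_algo} one first produces a finite set of maximal ideals of $\sO$ outside of which every potentially integral parameter is $\sO_\fm$-integral, so that finite field specializations $\rd_{\ol{\bH}}^{\fm,u}$ in the sense of \S\ref{ffspecs} are available. By Theorem \ref{theorem_generic} the generic Calogero–Moser families $\mrm{CM}_\bic$ are known, and---this is the key input---they coincide with the generic Euler families; consequently the exceptional locus, i.e.\ the set of parameters at which the Calogero–Moser families become strictly coarser than $\mrm{CM}_\bic$, is contained in the Euler variety, which is an explicitly computable finite union of hyperplanes (obtained from the characters of the simple $KG$-modules as in \S\ref{euler_families}).

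The second and main step is then a finite check. For each hyperplane $H$ belonging to the Euler variety, pass to the generic point $\bik_H$ of $H$, form the restricted rational Cherednik algebra over the function field of $H$---this is possible because \champ\ handles arbitrary commutative base rings---and apply Algorithm \ref{dec_matrix_algorithm} to each family of Verma modules indexed by an Euler $\bik_H$-family. If the algorithm succeeds, its output is the set of Verma $\bik_H$-families, which by the theorem of Bonnafé–Rouquier recalled in \S\ref{verma_families} equals $\mrm{CM}_{\bik_H}$. One records $H$ as exceptional precisely when $\mrm{CM}_{\bik_H}$ is strictly coarser than $\mrm{CM}_\bic$. Running this over all Euler hyperplanes of all ten groups yields the exceptional locus.

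The last step is a direct comparison of the resulting list with Chlouveraki's list of essential hyperplanes of the cyclotomic Hecke algebra of $\Gamma$ from \cite{Chl09-Blocks-and-famil}, matched up on $\ol{\fR}_\Gamma$ via the re-parametrization $\Phi_\Gamma$ and the involution $(\cdot)^\sharp$. For $\rG_4,\rG_6,\rG_{12},\rG_{13},\rG_{14},\rG_{20},\rG_{22},\rG_{23},\rG_{24}$ the two lists agree entry by entry, whereas for $\rG_8$ the computed exceptional locus contains in addition the single Euler hyperplane $k_{1,1}-k_{1,2}+k_{1,3}$, on which the Calogero–Moser families are verified to be strictly coarser than the generic ones even though the corresponding Rouquier families are not.

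I expect the main obstacle to be feasibility rather than conceptual difficulty: restricting to hyperplanes multiplies the number of Verma-module decompositions to be carried out, several of these modules have dimension in the thousands, and Algorithm \ref{dec_matrix_algorithm} is a Las Vegas algorithm that may have to be rerun with different finite field specializations and could, in principle, fail altogether---though in practice the extension using the graded spinning algorithm mentioned in \S\ref{heads} always yielded an answer. One also has to certify, by theory and not merely by running \champ, that every chosen pair $(\fm,u)$ is genuinely a finite field specialization, i.e.\ that $\fm$ is good for $\Gamma$ and that $u \in \mrm{Che}_\Gamma^{-1}\sO^{\sC_\Gamma}$; this is exactly what Algorithm \ref{rrca_good_maximal_ideals_algo} secures. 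The extra hyperplane for $\rG_8$---independently discovered by Bonnafé---then emerges as a bona fide output of the computation, for which no theoretical explanation is presently available.
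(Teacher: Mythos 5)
Your proposal follows essentially the same route as the paper: the paper offers no written proof of this theorem beyond the \qed, the result being established precisely by the computational strategy you describe — generic computation showing $\mrm{CM}_\bic = \mrm{Eu}_\bic$ so that the exceptional locus lies in the Euler variety, then restriction to the generic point of each Euler hyperplane and application of Algorithm \ref{dec_matrix_algorithm} (cf.\ \S\ref{problem_reductions}, \S\ref{gen_rrca}, and Theorem \ref{generic_euler_results}), with the outcome tabulated in the ancillary document and reproducible via the scripts in \code{Experiments/GordonQuestions}. Your caveats about certifying the finite field specializations and about the Las Vegas nature of the algorithm are exactly the ones the paper itself raises.
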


\begin{theorem} \label{generic_euler_results}
Also for $\rG_6$, $\rG_8$, and $\rG_{14}$ we have the answers to all of Gordon's questions for the generic points of all Euler hyperplanes. \qed
\end{theorem}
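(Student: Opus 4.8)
The plan is to run the Las Vegas algorithm of \S\ref{heads} (Algorithm \ref{dec_matrix_algorithm}), as implemented in \champ, for each of the groups $\rG_6$, $\rG_8$, $\rG_{14}$, but with the base ring of the generic restricted rational Cherednik algebra $\ol{\bH}$ specialized to the coordinate ring of a fixed Euler hyperplane and the parameter taken to be its generic point, rather than the generic point of the full parameter space $\fR_\Gamma$ — this is precisely the variant indicated at the end of \S\ref{heads}. As a preliminary, one fixes a realization of each group over a number field $K$ with ring of integers $\sO$ and uses Algorithm \ref{rrca_good_maximal_ideals_algo} (the command \code{BadPrimesForRRCA}) to exhibit a finite set of maximal ideals of $\sO$ outside of which Proposition \ref{rrca_integral_structure_dedekind} guarantees the existence of the $\sO_\fm$-integral structures needed to form the finite field specializations $\rd_{\ol{\bH}}^{\fm,u}$ attached to the hyperplane.

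Next I would, for each Euler hyperplane, compute the generic Euler families $\mrm{Eu}$ at its generic point from the Euler element and the character table of $KG$ (see \S\ref{euler_families}); each Euler family $\Lambda$ yields a constituent-closed family of Verma modules $(\Delta(\lambda))_{\lambda\in\Lambda}$ with simple heads, which is exactly the input for Algorithm \ref{dec_matrix_algorithm}. The Verma modules are constructed explicitly via the formulas (\ref{verma_modules_explicitly:x}), (\ref{verma_modules_explicitly:g}), (\ref{verma_modules_explicitly:y}) together with the X-tables of \S\ref{X_tables}. One then picks a random good $\fm$ and a random point $u\in\mrm{Che}_\Gamma^{-1}\sO^{\sC_\Gamma}$ lying over the Euler hyperplane, transports the Verma modules to the resulting finite field via \code{Specialize}, runs the \textsc{MeatAxe} to obtain radicals and constituents, and uses \textsc{ModFinder} (Algorithm \ref{modfinder_alg}) to reconstruct the radicals — hence the heads $\rL(\lambda)$ and the decomposition matrix — back over the function field, exactly as in \S\ref{heads}; Proposition \ref{strongly_positive_map_props}, in particular part \ref{strongly_positive_map_props_1}, certifies correctness whenever the algorithm reports success. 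Once the decomposition matrices and the graded simple modules $\rL(\lambda)$ are known, Gordon's question \S\ref{gordon_problems}\ref{gordon_problems:verma_dec} is answered directly, \S\ref{gordon_problems}\ref{gordon_problems:g_char} follows by combining the graded dimensions of the $\rL(\lambda)$ with character theory of $KG$, and the Calogero–Moser families are read off from the Verma families via the Bonnafé–Rouquier result recalled in \S\ref{verma_families}.

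The main obstacle is intrinsic to the method: Algorithm \ref{dec_matrix_algorithm} is a Las Vegas algorithm and can fail for a given finite field specialization, and this is a genuine risk here because on an Euler hyperplane more characters typically merge into the same family than generically, so the decomposition matrices are larger and the bijectivity hypothesis of Proposition \ref{strongly_positive_map_props}\ref{strongly_positive_map_props_1} is harder to realize. The remedy — which cannot be guaranteed a priori — is to re-run with fresh random $(\fm,u)$ and, in the stubborn cases, to fall back on the spinning-based extension described in the remark after Algorithm \ref{dec_matrix_algorithm}: pick a random $v\in\Delta(\lambda)$, pass to the quotient by the submodule it generates, and recurse. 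That these runs all terminated successfully for $\rG_6$, $\rG_8$, $\rG_{14}$ on every Euler hyperplane is an experimental fact, and the theorem records the output of those successful runs; by the soundness of Algorithm \ref{dec_matrix_algorithm} — the very point of Proposition \ref{strongly_positive_map_props} — the answers so computed are correct. As with the other theorems in this section, the logical content of the proof is therefore: the algorithm's success criteria were met in these cases, so its output is the truth.
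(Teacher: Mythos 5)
Your proposal is correct and matches the paper's approach exactly: the theorem records the output of successful runs of Algorithm \ref{dec_matrix_algorithm} (with finite field specializations, \textsc{MeatAxe}, and \textsc{ModFinder}) applied to the restriction of the generic restricted rational Cherednik algebra to each Euler hyperplane, with correctness guaranteed by Proposition \ref{strongly_positive_map_props} and the Calogero–Moser families read off from the Verma families via Bonnafé–Rouquier. The paper offers no further proof beyond this (hence the bare \qed), so there is nothing missing in your account.
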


\begin{remark}
Theorem \ref{generic_euler_results} does not yet imply that we know the results for \textit{all} parameters for $\rG_6$, $\rG_8$, and $\rG_{14}$ as the parameter space for these groups is three-dimensional and there is no theory of ``semi-continuity'' of the representation theory of restricted rational Cherednik algebras so far. To this end, we would also have to consider all intersections of the Euler hyperplanes—and this would be way too much to compute and document. So, to solve these cases we need new theory.
\end{remark}

\begin{question} \label{questions}
 Our results suggest the following questions:
 \begin{enum_thm}
 \item \label{questions_palindromic} Are the Poincaré series of simple modules for generic parameters always palindromic? If not, what lies behind this property?
 \item Is the property about the decomposition matrices of the Verma modules in theorem \ref{theorem_dec_matrix} always true?
 \item Is the locus of ``exceptional parameters'' always a union of hyperplanes (this was already asked by Bonnafé–Rouquier \cite{Bonnafe.C;Rouquier.R13Cellules-de-Calogero})? Does it always contain the union of Chlouveraki's essential hyperplanes?
 \end{enum_thm}
\end{question}

\begin{remark}
For special parameters it is no longer true that the Poincaré series of simple modules is palindromic. Already for $\rG_4$ on the hyperplane $k_{1,1}-2k_{1,2}=0$ we find a simple module with Poincaré series $1+2t$, which is not palindromic. There are many more counter-examples. 
\end{remark}

\begin{remark}
The first examples we found where the Rouquier families are strictly finer than the Calogero–Moser families are for $\rG_{20}$ and the hyperplanes $k_{1,1}=0$, $k_{1,2}=0$, and $k_{1,1}-k_{1,2}=0$.
\end{remark}

\begin{remark}
So far we have no idea about general properties of the (graded) $G$-module structures of the simple modules. We hope that our explicit results help to reveal them.
\end{remark}

\begin{remark}
We discussed rational Cherednik algebras for reflection groups over arbitrary fields as long as all reflections are diagonalizable and designed \champ\ to work in this generality. In \cite{Thiel.UOn-restricted-ration} we computed for example the representation theory of the restricted rational Cherednik algebra attached to the general orthogonal group $\mrm{GO}_3(3)$ and to modular reflection representations of some symmetric groups. These cases are not yet understood theoretically and we hope that such examples will help to develop a general theory.
\end{remark}

\section{CHAMP} \label{champ}

\begin{parani}
Now, we pass to the experimental part of this article. Everything we discussed so far has been implemented in \champ. The source code and documentation (including a Wiki) of \champ\ is freely available at \url{http://thielul.github.io/CHAMP/}. All parts are licensed under the GPL. Due to some operating system functions used in \champ, it will not work on Windows systems, just on Linux and Mac OS X systems. Moreover, a \textsc{Magma} version of at least 2.19 (released in December 2012) is necessary as we make use of user-defined types which did not exist in earlier versions. 

\subsection{Running Champ}
Once the downloaded package is unpacked one has to configure \champ\ by running
\begin{lstlisting}
$ ./configure
\end{lstlisting}
in a terminal and inside the directory of \champ. This sets several variables to the absolute path of \champ\ and is necessary for working with it. \champ\ is now started by running:
\begin{lstlisting}
$ ./champ 
Loading file "/CHAMP/CHAMP.m"

CHAMP (CHerednik Algebra Magma Package)
Version v1.5
Copyright (C) 2013, 2014 Ulrich Thiel
Licensed under GNU GPLv3, see LICENSE.txt
thiel@mathematik.uni-stuttgart.de
http://thielul.github.io/CHAMP/

>
\end{lstlisting}
Before we give a rough description of the capabilities of \champ, we point out the following important aspect: \vspace{4pt}

\begin{leftbar}
All actions in \textsc{Magma} are \textit{right} actions. This means whenever we start with a reflection group acting from the left and we consider left modules over rational Cherednik algebras, we have to transpose all matrices in \textsc{Magma}. Moreover, the rational Cherednik algebra implemented in \champ\ is the \textit{opposite} algebra of the one we are describing here theoretically. Hence, we have to reverse all products when passing between theory and \champ.
\end{leftbar}

\noindent This reversion process between theory and \champ\ might be confusing at first but we found it much more confusing when artificially working with left actions in \textsc{Magma}.
\end{parani}

\subsection{Reflection groups}

As one aim of \champ\ was to verify Martino's conjecture we had to make sure that we use the same labelings of irreducible characters of complex reflection groups as the one used by Chlouveraki \cite{Chl09-Blocks-and-famil} for the computation of Rouquier families. This is why we imported all relevant data from \textsc{Chevie} (see \cite{CHEVIE-JM-4}) and implemented basic data base support in \champ\ to deal with this data. This is illustrated by the following example:
\begin{lstlisting}
> G:=ExceptionalComplexReflectionGroup(4); 
> CharacterTable(~G);
> G`CharacterNames;
[ \phi_{1,0}, \phi_{1,4}, \phi_{1,8}, \phi_{2,5}, \phi_{2,3}, \phi_{2,1},
  \phi_{3,2} ]
\end{lstlisting}
In this example we loaded the exceptional complex reflection group $\rG_{4}$. The realization is the same as in \textsc{Chevie}, but note that all matrices are transposed. Then we attached the character table to this group. When doing this the names of the characters used in \textsc{Chevie} are automatically loaded and stored in the attribute \code{CharacterNames} of the group. We see in this example that one philosophy of \champ\ is to work with procedures taking a reference to an object as input and store their result in the corresponding attribute of the objects. The reason for this is that we want to have easy access to all data already computed and to handle the large amount of data necessary to work with rational Cherednik algebras. The absolutely irreducible characteristic zero representations are now attached using the procedure \code{Representations(\~{}G,0)} and can be accessed via \code{G`Representations[0]}. Again we use the exact same realizations of these representations as in \textsc{Chevie}. Absolutely irreducible representations in characteristic $p$ can be attached by calling the above command with $p$ instead of $0$. \\
Next to the characters and representations, the reflections are important. A structured collection of the reflections is attached by the command \code{ReflectionLibrary} which gathers all the reflections of a reflection group $\Gamma$ in a nested list of the form
\[
\left( \left( \left(s\right)_{\rH_s = H} \right)_{H \in \Omega} \right)_{\Omega \in \sA_\Gamma} \;.
\]
Hence, for each orbit $\Omega$ of reflection hyperplanes of $\Gamma$ we have for each $H \in \Omega$ a list consisting of the reflections with hyperplane $H$. This allows us to label a reflection of $\Gamma$ by a triple $(i,j,k)$, where $i$ refers to the $i$-th reflection hyperplane orbit, $j$ refers to the $j$-th hyperplane in the orbit labeled by $i$, and $k$ refers to the $k$-th reflection with hyperplane $j$. This is precisely the triple we get when passing a reflection to the function \code{ReflectionID}. From the reflection library we automatically store representatives of the conjugacy classes of reflections in the attribute \code{ReflectionClasses}. 

\subsection{Cherednik algebras}

A generic Cherednik parameter can be obtained as follows:
\begin{lstlisting}
> G:=ExceptionalComplexReflectionGroup(4); 
> c:=CherednikParameter(G : Type:="GGOR"); c;
Mapping from: { 1 .. 2 } to Multivariate rational function field of 
rank 2 over Cyclotomic Field of order 3 and degree 2
    <1, (-zeta_3 + 1)*k_{1,1} + (2*zeta_3 + 1)*k_{1,2}>
    <2, (zeta_3 + 2)*k_{1,1} + (-2*zeta_3 - 1)*k_{1,2}>
\end{lstlisting}
This will be a map $c:\lbrack 1, N \rbrack \rarr L$, where $N$ is the number of conjugacy classes of reflections and $L$ is the appropriate rational function field (the residue field in the generic point of $\fR_\Gamma$). The numbers $1$ to $N$ of the domain of $c$ refer to the numbers in \code{ReflectionClasses}. So, if $s$ is a reflection of $\Gamma$ and $i$ is its reflection class number, then $c(i) = c(s)$. 

The command \code{CherednikParameter} has the additional option \code{Type} which allows specification of different types of parameters. In the above, we selected the GGOR type (see \S\ref{martino_prep}). We can instead also pass \code{EG} as type which are the parameters used in \cite{EG-Symplectic-reflection-algebras} or we can pass \code{BR} which are the parameters used in \cite{Bonnafe.C;Rouquier.R13Cellules-de-Calogero}. There is a further option \code{Rational} which, when set to false, returns the parameter with values in the polynomial ring instead of the rational function field. Instead of using generic parameters, the user can define any map $c:\lbrack 1, N \rbrack \rarr L$ as above which can be used for a Cherednik parameter. \\
Rational Cherednik algebras can be created as follows:
\begin{lstlisting}
> G:=ExceptionalComplexReflectionGroup(4); 
> c:=CherednikParameter(G : Type:="EG");
> H:=RationalCherednikAlgebra(G,<1,c>); H;
Rational Cherednik algebra
Generators:
    g1, g2, y1, y2, x1, x2
Generator degrees:
    0, 0, -1, -1, 1, 1
Base ring:
    Multivariate rational function field of rank 2 over Cyclotomic Field 
    of order 3 and degree 2
    Variables: k_{1,1}, k_{1,2}
Group:
    MatrixGroup(2, Cyclotomic Field of order 3 and degree 2) of order 
    2^3 * 3
    Generators:
    [     1      0]
    [     0 zeta_3]

    [1/3*(2*zeta_3 + 1) 1/3*(2*zeta_3 - 2)]
    [  1/3*(zeta_3 - 1)   1/3*(zeta_3 + 2)]
t-parameter:
    1
c-parameter:
    Mapping from: { 1 .. 2 } to Multivariate rational function field of 
    rank 2 over Cyclotomic Field of order 3 and degree 2
    <1, (-zeta_3 + 1)*k_{1,1} + (2*zeta_3 + 1)*k_{1,2}>
    <2, (zeta_3 + 2)*k_{1,1} + (-2*zeta_3 - 1)*k_{1,2}>
> H.3*H.5;
[1 0]
[0 1]*(y1*x1)
> H.5*H.3;
[1/3*(-2*zeta_3 - 1) 1/3*(-2*zeta_3 - 4)]
[  1/3*(-zeta_3 - 2)   1/3*(-zeta_3 + 1)]*(1/3*(2*zeta_3 + 4)*k_{1,1} + 
1/3*(-4*zeta_3 - 2)*k_{1,2})
+
[1/3*(-2*zeta_3 - 1) 1/3*(-2*zeta_3 + 2)]
[ 1/3*(2*zeta_3 + 1)   1/3*(-zeta_3 + 1)]*(1/3*(2*zeta_3 + 4)*k_{1,1} + 
1/3*(-4*zeta_3 - 2)*k_{1,2})
+
[1/3*(-2*zeta_3 - 1)  1/3*(4*zeta_3 + 2)]
[  1/3*(-zeta_3 + 1)   1/3*(-zeta_3 + 1)]*(1/3*(2*zeta_3 + 4)*k_{1,1} + 
1/3*(-4*zeta_3 - 2)*k_{1,2})
+
[1 0]
[0 1]*(y1*x1 + 1)
+
[ 1/3*(2*zeta_3 + 1)  1/3*(2*zeta_3 + 4)]
[1/3*(-2*zeta_3 - 1)    1/3*(zeta_3 + 2)]*(1/3*(-2*zeta_3 + 2)*k_{1,1} + 
1/3*(4*zeta_3 + 2)*k_{1,2})
+
[ 1/3*(2*zeta_3 + 1) 1/3*(-4*zeta_3 - 2)]
[   1/3*(zeta_3 + 2)    1/3*(zeta_3 + 2)]*(1/3*(-2*zeta_3 + 2)*k_{1,1} + 
1/3*(4*zeta_3 + 2)*k_{1,2})
+
[1/3*(2*zeta_3 + 1) 1/3*(2*zeta_3 - 2)]
[  1/3*(zeta_3 - 1)   1/3*(zeta_3 + 2)]*(1/3*(-2*zeta_3 + 2)*k_{1,1} + 
1/3*(4*zeta_3 + 2)*k_{1,2})
\end{lstlisting}
In the above example we created the opposite rational Cherednik algebra $H^{\mrm{op}} \dopgleich \rH_{1,\bic}^{\mrm{op}}$ for $\rG_4$ and the rational point $\bic$ of $\fR_\Gamma$. The generators of $H$ can be accessed via \code{H.i}, where $i$ lies between $2d+e$, where $d$ is the dimension of $\Gamma$ and $e$ is the number of generators of $\Gamma$. We see in the above output that the generators are ordered as $g_1,g_2,y_1,y_2,x_1,x_2$. In PBW basis expressions group algebra elements are always on the left and in matrix form. In the example we computed the products $y_1x_1$ and $x_1y_1$. Keep in mind that $H$ as created is the \textit{opposite} algebra to what we treated theoretically before. This is why $x_1y_1$ is \textit{not} in PBW form—it is actually the product $y_1x_1$, and this has to be rewritten.

\begin{example}
The following very elaborate example from Bonnafé–Rouquier \cite[\S19]{Bonnafe.C;Rouquier.R13Cellules-de-Calogero} can be treated easily in \champ. The Weyl group of type $\rB_2$ can be realized as the matrix group $\Gamma$ in $\GL_2(\bbQ)$ generated by the reflections
\[
s \dopgleich g_1 \dopgleich \begin{pmatrix} 0 & 1 \\ 1 & 0 \end{pmatrix}, \quad t \dopgleich g_2 \dopgleich \begin{pmatrix} -1 & 0 \\ 0 & 1 \end{pmatrix} \;.
\]
Let $y_1,y_2$ be the standard basis of $V \dopgleich \bbQ^2$ and let $x_1,x_2$ be the dual basis. Let $\lbrace A,B \rbrace$ be algebraically independent over $\bbQ$ and define $\bic_s \dopgleich -2A$, $\bic_{t} \dopgleich -2B$. As $s$ and $t$ are representatives of the conjugacy classes of reflections of $\Gamma$, this yields a map $\bic:\sC_\Gamma \rarr \bbQ(A,B)$ giving the generic point of $\fR_\Gamma$. Now, define the following elements of $\rH_{0,\bic}$:
\[
\sigma \dopgleich y_1^2+y_2^2\;, \quad \pi \dopgleich y_1^2y_2^2\;, \quad \Sigma \dopgleich x_1^2 + x_2^2\;, \quad \Pi \dopgleich x_1^2x_2^2 \;.
\]
In \cite[19.4.5]{Bonnafe.C;Rouquier.R13Cellules-de-Calogero} it is now proven that the Euler element $\mrm{eu}_\bic \in \rH_{0,\bic}$ is a zero of the polynomial
\begin{align*} \label{eu_B2_pol}
& t^8 - 2(\sigma \Sigma + 4A^2 + 4B^2)t^6 \\
& + (\sigma^2\Sigma^2 + 2(\sigma^2\Pi + \Sigma^2\pi - 8\pi\Pi) + 8(A^2 + B^2)\sigma\Sigma + 16(A^2-B^2)^2)t^4 \\
& - 2((\sigma\Sigma + 4A^2-4B^2)(\sigma^2\Pi+\Sigma^2\pi)-8\sigma\Sigma\pi\Pi+2B^2\sigma^2\Sigma^2)t^2 + (\sigma^2\Pi - \Sigma^2\pi)^2 \;.
\end{align*}
This fact was one essential part in determining the Calogero–Moser cells and to prove the Calogero–Moser cell conjecture for $\rB_2$. In \cite{Bonnafe.C;Rouquier.R13Cellules-de-Calogero} this is proven by an argument based on the undeformed situation in $\rH_{0,0}$. As the computation is quite elaborate and one does not want to write down all its details, let us see if we can verify this fact with \champ:
\begin{lstlisting}
> G:=CHAMP_GetFromDB("GrpMat/B2_BR","GrpMat"); //loads B2 as above
> C:=CherednikParameter(G:Type:="BR"); 
> H:=RationalCherednikAlgebra(G,C); 
> eu:=EulerElement(H); eu;
[1 0]
[0 1]*(y1*x1 + y2*x2)
+
[0 1]
[1 0]*(-C1)
+
[-1  0]
[ 0  1]*(-C2)
+
[ 1  0]
[ 0 -1]*(-C2)
+
[ 0 -1]
[-1  0]*(-C1)
> A:=C(1)*(-1/2); B:=C(2)*(-1/2);
> y2:=H.4; y1:=H.3; g2:=H.2; g1:=H.1; x2:=H.6; x1:=H.5;
> sigma:=y1^2+y2^2; pi:=y2^2*y1^2; Sigma:=x1^2+x2^2; Pi:=x2^2*x1^2;
> time eu^8 - 2*eu^6*(Sigma*sigma + 4*A^2 + 4*B^2) + 
eu^4*(Sigma^2*sigma^2 + 2*(Pi*sigma^2 + pi*Sigma^2 - 8*Pi*pi) + 
8*Sigma*sigma*(A^2+B^2) + 16*(A^2-B^2)^2) - 
2*eu^2*( (Pi*sigma^2 + pi*Sigma^2)*(Sigma*sigma + 
4*A^2 - 4*B^2) - 8*Pi*pi*Sigma*sigma + Sigma^2*sigma^2*B^2*2) + 
(Pi*sigma^2 - pi*Sigma^2)^2;
0
Time: 2.360
\end{lstlisting}
Hence, we could indeed verify (within only 2 seconds) that the Euler element is a zero of the polynomial above. Note again that we reversed all products as \champ\ works in the opposite algebra.
\end{example}

\subsection{Verma modules} \label{verma_example}

Let us now see how we can compute in \champ\ with Verma modules for restricted rational Cherednik algebras and how we can answer Gordon's questions:
\begin{lstlisting}
> G:=ExceptionalComplexReflectionGroup(4); Representations(~G,0);
> c:=CherednikParameter(G:Rational:=false); c;
Mapping from: { 1 .. 2 } to Polynomial ring of rank 2 over Cyclotomic 
Field of order 3 and degree 2
    <1, (-zeta_3 + 1)*k_{1,1} + (2*zeta_3 + 1)*k_{1,2}>
    <2, (zeta_3 + 2)*k_{1,1} + (-2*zeta_3 - 1)*k_{1,2}>
> R:=Codomain(c); R;
Polynomial ring of rank 2 over Cyclotomic Field of order 3 and degree 2
Order: Lexicographical
Variables: k_{1,1}, k_{1,2}
> cH:=SpecializeCherednikParameterInHyperplane(c, R.1-R.2); c;
Mapping from: { 1 .. 2 } to Multivariate rational function field of 
rank 1 over Cyclotomic Field of order 3 and degree 2
    <1, (zeta_3 + 2)*k_{1,2}>
    <2, (-zeta_3 + 1)*k_{1,2}>
> EulerFamilies(G,cH);
{@
    <{@ 5, 6 @}, 2*k_{1,2}>,
    <{@ 7 @}, 0>,
    <{@ 2, 3, 4 @}, -4*k_{1,2}>,
    <{@ 1 @}, 8*k_{1,2}>
@}
> V:=VermaModule(G,cH,G`Representations[0][2]); V;
Graded module of dimension 24 over an algebra with generator degrees 
[ -1, -1, 0, 0, 1, 1 ] over Multivariate rational function field of 
rank 1 over Cyclotomic Field of order 3 and degree 2.
> res, P, dims, Pseries, D, Gstruct, L := Gordon(G,cH, [ 2,3,4 ] : 
GeneratorSets:=[{1,2,3}], pExclude:={2,3,5});
> P;
<[ 735 ], Prime Ideal
Two element generators:
    [1873, 0]
    [115, 1]>
> dims;
[ 9, 1, 7 ]
> Pseries;
[
    1 + 2*t + 3*t^2 + 2*t^3 + t^4,
    1,
    2 + 3*t + 2*t^2
]
> D;
[1 1 2]
[1 1 2]
[2 2 4]
> Gstruct;
[*
    (    t^4       1       0       0       0 t^3 + t     t^2),
    (0 0 1 0 0 0 0),
    (  0   0   0   1 t^2   0   t)
*]
> L;
[*
    Graded module of dimension 9 over an algebra with generator degrees 
    [ -1, -1, 0, 0, 1, 1 ] over Multivariate rational function field of 
    rank 1 over Cyclotomic Field of order 3 and degree 2.,
    Graded module of dimension 1 over an algebra with generator degrees 
    [ -1, -1, 0, 0, 1, 1 ] over Multivariate rational function field of 
    rank 1 over Cyclotomic Field of order 3 and degree 2.,
    Graded module of dimension 7 over an algebra with generator degrees
    [ -1, -1, 0, 0, 1, 1 ] over Multivariate rational function field of 
    rank 1 over Cyclotomic Field of order 3 and degree 2.
*]
> IsModuleForRRCA(G,cH,L[1]);
true
\end{lstlisting}
In this example we are considering the group $\rG_4$. At the beginning we create the (non-rational) generic Cherednik parameter of GGOR type. We specialize this parameter in the hyperplane $H$ defined by $k_{1,1}-k_{1,2}$ of $\fR_\Gamma$ in GGOR parameters and get in this way the generic point $\bic_H$ of this hyperplane. We then compute the Verma module $\Delta_{\bic_H}(\phi_{1,4})$ which is a graded module of type \code{ModGr}. The central command is now \code{Gordon} which takes as input a reflection group $G$, a Cherednik parameter, and a list of integers referring to the irreducible representations of $G$ as in the attribute \code{G`Representations}. In the above example we apply it to the Euler $\bic_H$-family $\lbrace \phi_{1,4}, \phi_{1,8}, \phi_{2,5} \rbrace$. This command computes the corresponding Verma modules and applies our algorithms (encapsulated in the command \code{HeadsOfLocalModules}) to compute their heads and their decompositions. The additional option \code{GeneratorSets} controls which generators are used for the \textsc{ModFinder} algorithm (we chose in this case $y_1,y_2,g_2$ as generators) and the option \code{pExclude} describes the primes to be excluded when picking a finite field specialization (in this case we chose $2$, $5$, and $7$ as they are bad). We remark that many additional techniques on which we cannot comment here are ``secretly'' applied while running this command (see also \S\ref{experiments}). If successful, the output consists of the parameters and the prime ideal chosen for the finite field specialization, the dimensions of the simple modules, their Poincaré series, the decomposition matrix of the Verma modules (the entry $(i,j)$ in this matrix is the multiplicity of the head of the $j$-th Verma module in the $i$-th Verma module in the list passed to \code{Gordon}), the graded $G$-module structure of the simple modules and the simple modules themselves. Using the command \code{IsModuleForRRCA} we can check if a family of matrices indeed defines a module for the restricted rational Cherednik algebra—all necessary relations are checked. 

This example is the prototype showing how we can answer all of Gordon's questions by simply applying the command \code{Gordon} to Euler families. 

\subsection{Database}

 \label{champ_db}
All results we could compute so far are contained in an easily accessible database as illustrated by the following example:
\begin{lstlisting}
> G:=ExceptionalComplexReflectionGroup(4);                       
> answers:=Gordon(G);
> answers;
Associative Array with index universe Polynomial ring of rank 2 over 
Cyclotomic Field of order 3 and degree 2
> Keys(answers);
{
    k_{1,2},
    k_{1,1} - 2*k_{1,2},
    k_{1,1},
    2*k_{1,1} - k_{1,2},
    1,
    k_{1,1} + k_{1,2},
    k_{1,1} - k_{1,2}
}
> P:=Universe(Keys(answers)); answers[P.1-P.2];  
rec<recformat<Hyperplane, EulerFamilies, SimpleDims, SimplePSeries,
SimpleGModStruct, SimpleGradedGModStruct, VermaDecomposition,
CMFamilies> | 
    Hyperplane := k_{1,1} - k_{1,2},
    EulerFamilies := {
        { 1 },
        { 2, 3, 4 },
        { 7 },
        { 5, 6 }
    },
    SimpleDims := [ 24, 9, 1, 7, 8, 16, 24 ],
    SimplePSeries := [
        1 + 2*t + 3*t^2 + 4*t^3 + 4*t^4 + 4*t^5 + 3*t^6 + 2*t^7 + t^8,
        1 + 2*t + 3*t^2 + 2*t^3 + t^4,
        1,
        2 + 3*t + 2*t^2,
        2 + 4*t + 2*t^2,
        2 + 4*t + 4*t^2 + 4*t^3 + 2*t^4,
        3 + 6*t + 6*t^2 + 6*t^3 + 3*t^4
    ],
    SimpleGModStruct := [
        (1 1 1 2 2 2 3),
        (1 1 0 0 0 2 1),
        (0 0 1 0 0 0 0),
        (0 0 0 1 1 0 1),
        (0 0 1 1 1 0 1),
        (1 1 0 1 1 2 2),
        (1 1 1 2 2 2 3)
    ],
    SimpleGradedGModStruct := [
        (              1             t^8             t^4       
        t^5 + t^7         t + t^3       t^3 + t^5 t^2 + t^4 + t^6),
        (    t^4       1       0       0       0 t + t^3     t^2),
        (0 0 1 0 0 0 0),
        (  0   0   0   1 t^2   0   t),
        (  0   0   t t^2   1   0   t),
        (      t     t^3       0     t^2     t^2 1 + t^4 t + t^3),
        (          t^2           t^2           t^2       t + t^3       
        t + t^3       t + t^3 1 + t^2 + t^4)
    ],
    VermaDecomposition := [
        (1 0 0 0 0 0 0),
        (0 1 1 2 0 0 0),
        (0 1 1 2 0 0 0),
        (0 2 2 4 0 0 0),
        (0 0 0 0 2 2 0),
        (0 0 0 0 2 2 0),
        (0 0 0 0 0 0 3)
    ],
    CMFamilies := {
        { 1 },
        { 2, 3, 4 },
        { 7 },
        { 5, 6 }
    }>
> RouquierFamilies(G)[P.1-P.2];
{
    { 1 },
    { 2, 3, 4 },
    { 7 },
    { 5, 6 }
}
\end{lstlisting}
In this example we fetched all the results for the example discussed in \S\ref{verma_example} from the database. This is done by calling the command \code{Gordon} for an exceptional complex reflection group created with \code{ExceptionalComplexReflectionGroup}. The result is an associative array indexed by \textit{normalized} equations for the hyperplanes of the Euler variety, and by $1$ signifying generic parameters. It is now easy to test conjectures on these results without performing any additional computations.

 \begin{remark}
 As we want to ensure verifiability of our results we have included in the directory \code{Experiments/GordonQuestions} in \textsc{Champ} scripts which allow a re-computation from scratch of all our results and show how exactly we computed them.
 \end{remark}
 
 \section{Experimental aspects} \label{experiments}
 
 \begin{parani}
The run time and success of the \textsc{ModFinder} algorithm can depend heavily on the input data and on the choices made. We therefore point out some issues we observed in experiments with the hope that future developments will clarify these aspects and lead to further improvements.
 \end{parani}

\subsection{The effect of the choice of generators and realizations}

In Table \ref{verma_head_computation_data} we list some data concerning the computation of the Verma modules and the head of a Verma modules using our algorithm. All computations and time measurements have been performed on an Intel\textregistered\ Core\texttrademark\ i7-3930K $@$ 3.2GHz running the AVX version of \textsc{Magma} 2.19-8.  We always work with generic GGOR parameters and use the realizations of the exceptional complex reflections groups and their representations as obtained from \textsc{Chevie} (these are also the ones used in \champ\ by default).

The columns denoted by $t_\Delta$ give the time needed for computing the $X$-table explained in \S\ref{X_tables} and the time it then takes to compute the corresponding Verma module. The column \textit{Vars} lists the number of variables in the abstract structure of the Jacobson radical of the Verma module (note that our algorithm has to be successful to determine this number). The column $\mbi{g}$ lists the generators we have selected for the \textsc{ModFinder} algorithm. In the last columns denoted by $t_{\Hd \Delta}$ we list the time the \textsc{MeatAxe} needed to determine the Jacobson radical of the finite field specialization of the Verma module, the time the \textsc{ModFinder} needed and the total time (this includes for example the graded spinning algorithm to ensure that we found a submodule).   
     \begin{table}[htbp] \label{computations_data} \footnotesize
   \centering
    \begin{tabular}{|c|c|c|c|c|c|c|c|c|c|c|}
      \hline
      $G$ & $\lambda$ & $\dim \Delta$ & \multicolumn{2}{c|}{$t_{\Delta}$} & $\mrm{dim} \Hd \Delta$ & Vars & $\mbi{g}$ & \multicolumn{3}{c|}{$t_{\Hd \Delta}$}  \\ \hline \hline
      $\rG_4$ & $\phi_{3,2}$ & $72$ & $0.19$ & $0.21$ & $24$ & $52$ & $\lbrace y_2  \rbrace$  & $0.01$ & $0.73$ & $1.98$ \\ \hline
      $\rG_5$ & $\phi_{3,6}$ & $216$  & $2.19$ & $1.78$ & $24$ & $70$ & $ \lbrace y_2 \rbrace $ & $0.12$ & $52.61$ & $74.58$ \\ \hline
       $\rG_5$ & $\phi_{3,6}$ & .  & . & . & . & . & $ \lbrace y_2,x_2 \rbrace $ & . & $12.06$ & $33.94$ \\ \hline
       $\rG_5$ & $\phi_{3,6}^{(1)}$ & .  & . & $2.4$ & . & $24$ & $ \lbrace y_2 \rbrace $ & $0.12$ & $0.67$ & $1.83$ \\ \hline
       $\rG_7$ & $\phi_{2,15}$ & $288$  & $10.39$ & $5.73$ & $72$ & $208$ & $ \lbrace y_2, y_1 \rbrace $ & $0.22$ & $735.70$ & $860.56$ \\ \hline
       $\rG_7$ & $\phi_{2,15}$ & $.$  & $.$ & $.$ & $.$ & $.$ & $ \lbrace y_2, g_1 \rbrace $ & . & $205.01$ & $329.81$ \\ \hline
       $\rG_{23}$ & $\phi_{4,4}$ & $480$ & $12.15$ & $10.27$ & $60$ & $759$ & $\lbrace y_3,y_2 \rbrace$ & $3.74$ & $40.49$ & $72.04$ \\ \hline
       $\rG_9$ & $\phi_{3,4}$ & $576$ & $23.56$ & $11.16$ & $192$ & $491$ & $\lbrace y_2 \rbrace$ & ? & ? & ?\\ \hline
       $\rG_9$ & $\phi_{3,4}^{(2)}$ & . & . & $36.72$& . & $90$ & $\lbrace y_2 \rbrace$ & $1.13$ & $4.65$ & $11.83$ \\ \hline 
       $\rG_9$ & $\phi_{3,4}^{(1)}$ & . & . & $12.66$ & . & $630$ & $\lbrace y_2 \rbrace$ & ? & ? & ? \\ \hline
       $\rG_{24}$ & $\phi_{3,8}^{(1)}$ & $1008$ & $206.43$ & $91.85$ & $156$ & $3888$ & $\lbrace y_3,y_2 \rbrace$ & $24.22$ & $595.72$ & $849.51$\\ \hline
       $\rG_{24}$ & $\phi_{3,10}^{(1)}$ & $1008$ & . & $99.50$ & $6$ & $14$ & $\lbrace y_3,y_2 \rbrace$ & $28.47$ & $0.13$ & $50.17$ \\ \hline
    \end{tabular} 
    \caption{Experimental data about the computation of the heads of Verma modules.} \label{verma_head_computation_data}  
    \end{table} 
This table shows us immediately how sensitive our approach is to the choices we make throughout. Let us discuss this in more detail. \\

First of all, we can see that we usually work with very small $\mbi{g}$. We almost never had to consider all algebra generators for the \textsc{ModFinder} algorithm. For the computation of the head of the Verma module $\Delta_\bik(\phi_{3,6})$ for $\rG_5$, however, we see that the selection of $\mbi{g}$ can be important. It this situation the choice $\mbi{g} = \lbrace y_2,x_2 \rbrace$ is more than twice as fast as $\lbrace y_2 \rbrace$. Unfortunately we cannot say yet what makes one choice better than the other—we just found efficient choices by experimenting and it seems best to start with the basis $(y_i)_{i=1}^n$ of $V$.

Next, we observed that when modifying our explicit realizations of the group and the irreducible representations of the group in such a way that one generator of the group is diagonal and acts diagonally on all representations the \textsc{ModFinder} algorithm usually performs much faster. We denote in this table by $\lambda^{(i)}$ the representation obtained from $\lambda$ by changing the basis so that the generator $i$ of the chosen realization of the group acts diagonally. Comparing the computations for $\phi_{3,6}$ and $\phi_{3,6}^{(1)}$ for $\rG_5$ we see that we obtained the solution for $\phi_{3,6}^{(1)}$ around 20 times faster than for $\phi_{3,6}$. We see that the number of variables in the Jacobson radical drops from $70$ to only $24$ which is probably the reason for the speedup. Because of this the command \code{Gordon} always automatically performs such a diagonalization, respecting the fact that the realizations of the exceptional complex reflection groups in \textsc{Chevie} are usually chosen such that one generator is already diagonal.

In the example $\phi_{3,8}^{(1)}$ for $\rG_{24}$ we see that even a very large number of variables ($3888$ in this case) do not necessarily have to be a problem. We are able to compute the head of the corresponding $1008$-dimensional Verma module in just around 15 minutes. Even more fascinating is the example $\phi_{3,10}^{(1)}$ for $\rG_{24}$. Here, we finish the determination of the $1002$-dimensional Jacobson radical in just $50$ seconds (the \textsc{ModFinder} algorithm just needs $0.13$ seconds).

We see from these examples that our algorithm can be surprisingly powerful but that it is very hard to control theoretically.

\subsection{Comparison with the algorithm in Magma} \label{magmas_algo_timing}
 So far we did not comment on other already existing algorithms to compute the heads of the Verma modules in characteristic zero. The \textsc{MeatAxe} might actually solve this problem in special situations. In his PhD thesis Steel \cite{Steel.A12Construction-of-Ordi} has developed a general characteristic zero \textsc{MeatAxe} which is in theory capable of computing the radical of a module over an algebra over a field of characteristic zero. This algorithm is implemented in \textsc{Magma} since 2012 and it is---to our knowledge---the only algorithm which could also be used to compute the head of Verma modules for restricted rational Cherednik algebras.\footnote{Unfortunately, it seems that there is no publication describing these methods in detail. Nevertheless, we argue here that \textit{in our case at least} we have more significant reasons for not using it.} We therefore have to compare our methods with this algorithm. As it is also a Las Vegas algorithm, we cannot simply test it once for a specific problem and record the run time because it might always be the case that the randomly chosen parameters were bad. We thus have to run several tests and determine the average run time. We run each attempt with a time out $\tau$ of 900 seconds (15 minutes) for each attempt as the run time of our algorithm is always much lower. We then record the average run time of all successful approaches, and record the success rate $\alpha$ within the time window $\tau$ for specific problems. The results are listed in Table \ref{magma_algo_comparison_table}.
\begin{table}[htbp] \footnotesize
    {\centering
    \begin{tabular}{|c|c|c||c|c||c|c|}
      \hline
    $\rG$ & $\lambda$ & Tests & \textsc{Magma} Avg. & \textsc{Magma} $\alpha$ & \textsc{Champ} Avg. & \textsc{Champ} $\alpha$ \\ \hline \hline
    $\rS_4$ & $(2,1,1)$ & $82$ & $0.65$ & $0.13$ & $0.23$ & $1.0$ \\ \hline
    $\rG_4$ & $\phi_{3,7}^{(1)}$ & $84$ & $0.76$ & $0.15$ & $0.7$ & $1.0$ \\ \hline
    $\rG_4$ & $\phi_{3,7}$ & $82$ & -- & $0.0$ & $5.2$ & $1.0$ \\ \hline
    $\rG_{12}$ & $\phi_{4,3}^{(3)}$ & $84$ & $3.29$ & $0.14$ & $0.38$ & $1.0$ \\ \hline
    $\rG_{6}$ & $(\phi_{2,5}')^{(2)}$ & $77$ & -- & $0.0$ & $0.25$ & $1.0$ \\ \hline
    $\rG_6$ & $(\phi_{2,3}'')^{(2)}$ & $79$ & -- & $0.0$ & $0.25$ & $1.0$ \\ \hline
    $\rG_5$ & $\phi_{3,6}^{(1)}$ & $81$ & -- & $0.0$ & $5.1$ & $1.0$ \\ \hline
    $\rG_7$ & $(\phi_{2,11}')^{(1)}$ & $78$ & -- & $0.0$ & $42.0$ & $1.0$ \\ \hline
\end{tabular}

}
\caption{Comparison of \textsc{Magma}'s algorithm (left) with ours (right).} \label{magma_algo_comparison_table}
\end{table}

We see that our success rate is always 100\% while \textsc{Magma}'s success rate is below 15\%—if there is success at all. For all problems where \textsc{Magma}'s algorithm did not return a result within the time window $\tau$ we also did not get a result in sporadic attempts after a couple of days. Although this does not mean that \textsc{Magma}'s algorithm would not eventually solve the problem, it should be quite clear from the table that without our algorithm we would not have been able to obtain most results in \S\ref{further_results}---in particular since the modules we have to work with are much bigger than those listed in the table. 

\begin{remark}
As our algorithm for determining the head of a module with simple head in characteristic zero is completely general (despite non-trivial theoretical assumptions which have to be checked in each case), it is in principle applicable to many more situations. We hope that future developments and improvements to this method will enable us to solve problems in other contexts.  
\end{remark}

\end{document}